\titleformat{\subsection}[runin]{\normalfont\bfseries}{\thesubsection.}{.5em}{}[.]\titlespacing{\subsection}{0pt}{2ex plus .1ex minus .2ex}{.8em}
\titleformat{\subsubsection}[runin]{\normalfont\itshape}{\thesubsubsection.}{.3em}{}[.]\titlespacing{\subsubsection}{0pt}{1ex plus .1ex minus .2ex}{.5em}
\titleformat{\paragraph}[runin]{\normalfont\itshape}{\theparagraph.}{.3em}{}[.]\titlespacing{\paragraph}{0pt}{1ex plus .1ex minus .2ex}{.5em}
\def\SetFigFont#1#2#3#4#5{\small}
\numberwithin{equation}{section}
\numberwithin{figure}{section}
\theoremstyle{plain} 
\newtheorem{theorem}{Theorem}[section]
\newtheorem*{theorem*}{Theorem}
\newtheorem{lemma}[theorem]{Lemma}
\newtheorem*{lemma*}{Lemma}
\newtheorem{corollary}[theorem]{Corollary}
\newtheorem*{corollary*}{Corollary}
\newtheorem{proposition}[theorem]{Proposition}
\newtheorem*{proposition*}{Proposition}
\newtheorem{definition}[theorem]{Definition}
\newtheorem*{definition*}{Definition}
\newtheorem*{conjecture*}{Conjecture}
\theoremstyle{definition} 
\newtheorem*{assumption*}{Assumption}
\newtheorem*{example*}{Example}
\newtheorem{remark}[theorem]{Remark}
\newtheorem*{remark*}{Remark}
\newcommand{\f}[1]{\boldsymbol{\mathrm{#1}}} 
\renewcommand{\cal}{\mathcal}
\newcommand{\txt}[1]{\text{\rm{#1}}}
\definecolor{darkred}{rgb}{0.9,0,0.3}
\definecolor{darkblue}{rgb}{0,0.3,0.9}
\definecolor{darkgreen}{rgb}{0,0.7,0.2}
\def\comment#1{\ifthenelse{\isodd{\value{page}}}{\marginpar{\raggedright\scriptsize{\textcolor{darkred}{#1}}}}{\marginpar{\raggedleft\scriptsize{\textcolor{darkred}{#1}}}}}
\renewcommand{\P}{\mathbb{P}}
\newcommand{\E}{\mathbb{E}}
\newcommand{\R}{\mathbb{R}}
\newcommand{\C}{\mathbb{C}}
\newcommand{\N}{\mathbb{N}}
\newcommand{\Z}{\mathbb{Z}}
\def\clap#1{\hbox to 0pt{\hss#1\hss}}
\newcommand{\ee}{\mathrm{e}}
\newcommand{\ii}{\mathrm{i}}
\newcommand{\dd}{\mathrm{d}}
\newcommand{\col}{\mathrel{\vcenter{\baselineskip0.75ex \lineskiplimit0pt \hbox{.}\hbox{.}}}}
\newcommand*{\deq}{\mathrel{\vcenter{\baselineskip0.65ex \lineskiplimit0pt \hbox{.}\hbox{.}}}=}
\newcommand{\eqdist}{\overset{\text{d}}{=}}
\renewcommand{\leq}{\leqslant}
\renewcommand{\geq}{\geqslant}
\renewcommand{\epsilon}{\varepsilon}
\newcommand{\floor}[1] {\lfloor {#1} \rfloor}
\newcommand{\qq}[1]{[\![{#1}]\!]}
\newcommand{\ind}[1]{\f 1 (#1)}
\newcommand{\p}[1]{({#1})}
\newcommand{\pb}[1]{\bigl({#1}\bigr)}
\newcommand{\pB}[1]{\Bigl({#1}\Bigr)}
\newcommand{\pbb}[1]{\biggl({#1}\biggr)}
\newcommand{\pBB}[1]{\Biggl({#1}\Biggr)}
\newcommand{\pa}[1]{\left({#1}\right)}
\newcommand{\q}[1]{[{#1}]}
\newcommand{\qb}[1]{\bigl[{#1}\bigr]}
\newcommand{\qbb}[1]{\biggl[{#1}\biggr]}
\newcommand{\qBB}[1]{\Biggl[{#1}\Biggr]}
\newcommand{\h}[1]{\{{#1}\}}
\newcommand{\hb}[1]{\bigl\{{#1}\bigr\}}
\newcommand{\abs}[1]{\lvert #1 \rvert}
\newcommand{\absb}[1]{\bigl\lvert #1 \bigr\rvert}
\newcommand{\absB}[1]{\Bigl\lvert #1 \Bigr\rvert}
\newcommand{\absa}[1]{\left\lvert #1 \right\rvert}
\newcommand{\norm}[1]{\lVert #1 \rVert}
\newcommand{\normbb}[1]{\biggl\lVert #1 \biggr\rVert}
\newcommand{\scalar}[2]{\langle{#1} \mspace{2mu}, {#2}\rangle}
\DeclareMathOperator{\tr}{Tr}
\DeclareMathOperator{\im}{Im}
\DeclareMathOperator{\spn}{span}
\begin{document}
\title{Bulk eigenvalue statistics for random regular graphs}
\author{Roland Bauerschmidt\footnote{Harvard University, Department of Mathematics. E-mail: {\tt brt@math.harvard.edu}.} \and
Jiaoyang Huang\footnote{Harvard University, Department of Mathematics. E-mail: {\tt jiaoyang@math.harvard.edu}.} \and
Antti Knowles\footnote{ETH Z\"urich, Departement Mathematik. E-mail: {\tt knowles@math.ethz.ch}.} \and
Horng-Tzer Yau\footnote{Harvard University, Department of Mathematics. E-mail: {\tt htyau@math.harvard.edu}.}}
\date{August 24, 2016}
\maketitle

\begin{abstract}
We consider the uniform random $d$-regular graph on $N$ vertices,
with $d \in [N^\alpha, N^{2/3-\alpha}]$ for arbitrary $\alpha > 0$.
We prove that in the bulk of the spectrum the local eigenvalue correlation functions
and the distribution of the gaps between consecutive eigenvalues coincide with those of the Gaussian Orthogonal Ensemble.
\end{abstract}


\section{Introduction and results}
\label{sec:intro}

\subsection{Introduction}

The universality of local eigenvalue statistics is one of the central questions in random matrix theory.
Random matrix statistics are believed to apply to very general complex systems, including the zeros of
the Riemann $\zeta$-function on the critical line. However, proofs of random matrix statistics have so far been
limited mostly to matrix ensembles, with the notable exception \cite{MR1659828}.
There are two classes of matrix ensembles for which random matrix statistics
have been established under very general conditions: invariant ensembles and ensembles with independent entries.
For ensembles of random matrices that are invariant under the unitary or orthogonal group (invariant ensembles),
much has been understood via the method of orthogonal polynomials (see e.g.\ \cite{MR1702716,MR1435193,MR1715324}) and, more recently,
general results have been obtained by direct comparision of ensembles with different potentials
\cite{MR3192527,MR3390602,MR3351052}.
For Wigner matrices and generalized Wigner matrices, whose entries are independent and typically nonzero,
the universality problem has also essentially been solved completely
\cite{MR1810949,MR2810797,MR2639734,MR2662426,MR2981427,MR2784665,
MR3372074,CPA:CPA21624,MR2964770}.
For random sparse matrices with independent entries,
significant progress has been made as well.
In particular, for Erd\H{o}s-R\'enyi graphs, in which each edge is chosen independently with
probability $p$,
random matrix statistics for both the bulk eigenvalues and the second
largest eigenvalue of the adjacency matrix
were established in \cite{MR2964770} under the condition $pN \geq N^{2/3+\alpha}$ with any $\alpha>0$.
For the bulk eigenvalues,
the  lower bound on $pN$ was recently  extended  to $pN \geq N^{\alpha}$ for any $\alpha>0$
in \cite{MR3429490},
and GOE statistics for the eigenvalue gaps was also established.
Finally, aside from the approaches discussed above, supersymmetry
has been used to obtain results on the local eigenvalue statistics
for some special classes of distributions with independent entries (see e.g.\ \cite{MR3192170}).
In addition, local random matrix statistics have been established by an analysis of transfer matrices (see e.g.\ \cite{MR3192614}).

In this paper we study random regular graphs, which are not invariant and do not have independent entries.
We show that the eigenvalues of their adjacency matrices obey random matrix statistics in the bulk of the spectrum.
The universality of local eigenvalue statistics for non-invariant matrix ensembles with correlated entries
has recently been studied in a few other cases.
In particular, after the appearance of this paper,
GOE eigenvalue statistics were proved for the Laplacian matrix of sparse Erd\H{o}s-R\'enyi graphs  in \cite{1510.06390},
and for random matrices with certain short-range correlations in \cite{1604.05709,1604.08188};
these results do not cover the hard constraints of random regular graphs.

\subsection{Main results}

Let $A$ be the adjacency matrix of the uniform random $d$-regular graph (RRG) on $N$ vertices,
i.e.\ a uniformly chosen symmetric matrix with entries in $\{0,1\}$ such that
all rows and columns have sum equal to $d$ and all diagonal entries vanish.
For $d \to \infty$ as $N\to\infty$, it is known \cite{MR2999215,1503.08702,MR3025715} that the eigenvalue density
of $(d-1)^{-1/2}A$ converges to the Wigner \emph{semicircle law} whose density is $\varrho(x) \deq \frac{1}{2\pi}\sqrt{[4-x^2]_+}$.
For $d$ at least $(\log N)^4$, three of the authors recently proved a \emph{local semicircle law} for random regular graphs \cite{1503.08702},
giving precise estimates on the Green's function and the eigenvalue density,
down to spectral scales comparable with the typical eigenvalue spacing (up to a logarithmic correction).
In this paper, we consider the
local eigenvalue statistics of random regular graphs in the bulk of the spectrum.

As the adjacency matrix of a $d$-regular graph, the matrix $A$ has the
trivial uniform eigenvector $\f e \deq N^{-1/2}(1, \dots, 1)^*$ with
eigenvalue $d$.  We denote by $\lambda_1 \geq \dots \geq
\lambda_{N-1}$ the ordered nontrivial eigenvalues of $(d-1)^{-1/2}A$,
and by $\E_{\text{RRG}}$ the expectation with respect to the induced
law on $\lambda_1\geq \dots\geq \lambda_{N-1}$. By comparison, we
denote by $\E_{\text{GOE}}$ the expectation with respect to the law of
the ordered eigenvalues $\lambda_1 \geq \dots \geq \lambda_{N-1}$ of
the Gaussian Orthogonal Ensemble (GOE) on $\R^{(N - 1) \times (N - 1)}$,
normalized so that the off-diagonal entries have variance $N^{-1}$.

The typical locations $\gamma_i$ of the eigenvalues under the semicircle law are defined by
\begin{equation} \label{e:gammadef}
  \frac{i}{N} \;=\; \int_{\gamma_i}^2 \varrho(x) \, \dd x \,.
\end{equation}

\begin{theorem} \label{thm:gap}
Fix $\alpha>0$, and suppose that $d \in [N^\alpha, N^{2/3-\alpha}]$. Then, in the limit $N \to \infty$, the bulk gap statistics of the random $d$-regular graph coincide with those of the GOE. More precisely, for any fixed $\kappa > 0$, $n \in \N$, and $\phi \in C_c^\infty(\R^n)$, we have
\begin{equation} \label{e:gap}
  \pb{\E_{\txt{RRG}} - \E_{\txt{GOE}}} \, \phi\pb{N\varrho(\gamma_i)(\lambda_i-\lambda_{i+1}), \dots, N\varrho(\gamma_i)(\lambda_i-\lambda_{i+n})} \;=\; o(1)
\end{equation}
as $N \to \infty$, uniformly in $i \in \qq{\kappa N, (1-\kappa)N}$.
\end{theorem}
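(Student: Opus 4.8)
The plan is to carry out the three-step strategy for bulk universality --- local law, Dyson Brownian motion (DBM), comparison --- adapted to the hard degree constraint $A\f e=d\f e$. Throughout, let $\hat A$ be the compression of $A$ to the hyperplane $\f e^\perp$, so that the nontrivial eigenvalues of $(d-1)^{-1/2}A$ are precisely the eigenvalues of $H\deq (d-1)^{-1/2}\hat A$. \emph{Step 1 (local law).} Since $d\geq N^\alpha$, I would invoke the local semicircle law of \cite{1503.08702} for $H$, which provides, down to spectral scales $\eta\geq N^{-1+\epsilon}$: optimal entrywise and isotropic bounds on the Green's function $(H-z)^{-1}$, rigidity $\abs{\lambda_i-\gamma_i}\leq N^{-1+\epsilon}$ with high probability for bulk indices $i$, and complete delocalization of the eigenvectors. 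These are the quantitative inputs for the remaining two steps.

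\emph{Step 2 (universality of a Gaussian-divisible reference).} On $\f e^\perp$ I would run the matrix Ornstein--Uhlenbeck / Dyson Brownian motion flow $\dd H_t=\tfrac{1}{\sqrt N}\,\dd B_t-\tfrac12 H_t\,\dd t$ started at $H_0=H$, whose equilibrium is the GOE on $\f e^\perp$; then $H_t\eqdist \ee^{-t/2}H+\sqrt{1-\ee^{-t}}\,\hat G$ with $\hat G$ a GOE on $\f e^\perp$ independent of $H$. Because the local law of Step 1 holds at time $0$, the local relaxation of DBM --- via the coupled eigenvalue dynamics and the analysis of the associated local equilibrium (log-gas) measure, as developed in the sparse regime in \cite{MR3429490} --- shows that for $t=N^{-1+\delta}$ with small $\delta>0$ the bulk gap statistics of $H_t$ coincide with those of the GOE, uniformly for $i\in\qq{\kappa N,(1-\kappa)N}$; taking the Gaussian component in $\f e^\perp$ rather than in $\R^{(N-1)\times(N-1)}$ changes nothing at leading order.

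\emph{Step 3 (comparison of the RRG with $H_t$).} It then remains to prove $\pb{\E_{\txt{RRG}}-\E_{H_t}}\,\phi(\ldots)=o(1)$, i.e.\ that adding the tiny Gaussian part $\sqrt{1-\ee^{-t}}\,\hat G$ to $\ee^{-t/2}H$ does not move the bulk gaps. The usual entrywise Lindeberg replacement is unavailable because the entries of $A$ are rigidly coupled by the degree constraint; in its place I would use a continuous interpolation between the RRG law and its Gaussian smoothing whose infinitesimal steps are \emph{switchings} --- replacing a disjoint pair of edges $\{a,b\},\{c,d\}$ by $\{a,c\},\{b,d\}$ (or $\{a,d\},\{b,c\}$), which preserves $d$-regularity and perturbs $A$ by a matrix of bounded rank. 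The crux, established by \emph{local resampling} estimates (resample all edges at a vertex against random switchings and control how little $(H-z)^{-1}$ changes), is that the generator of the weighted switching dynamics, applied to resolvent observables $\tr\,(H-z)^{-1}$ and products thereof, reproduces --- up to errors controlled by the isotropic local law and delocalization, and subleading precisely because $N^\alpha\leq d\leq N^{2/3-\alpha}$ --- the diffusion part of the Ornstein--Uhlenbeck generator of Step 2, a single switching playing the role of an independent Gaussian increment. Feeding this matching into the Green's-function comparison scheme for gap (or correlation-function) observables, with the regularizing imaginary part at scale $N^{-1+\epsilon}$ furnished by Step 1, closes the comparison; chaining Steps 1--3 then yields \eqref{e:gap}.

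The hard part will be Step 3. One must estimate the error terms generated when the discrete switching dynamics is matched to the continuous OU dynamics --- these are products of Green's-function entries whose smallness relies on the full strength of the local law together with eigenvector delocalization --- and one must ensure that every measure along the interpolation remains compatible with the degree constraint, which forces an iteration of the local-resampling estimates and is where the admissible range of $d$, both the lower bound $N^\alpha$ and the upper bound $N^{2/3-\alpha}$, is genuinely used.
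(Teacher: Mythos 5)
Your three-step scaffold, the use of DBM constrained to $\f e^\perp$, and the central idea that a switching dynamics on $d$-regular graphs should approximate the constrained Ornstein--Uhlenbeck generator are exactly the paper's. But two things in Step~3 need correcting.

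First, the mechanism of the comparison. You describe ``a continuous interpolation between the RRG law and its Gaussian smoothing whose infinitesimal steps are switchings,'' and say the crux is ``local resampling estimates'' controlling how $(H-z)^{-1}$ changes when edges at a vertex are resampled. The paper does not interpolate via a switching chain (the introduction explicitly warns that the eigenvalue process induced by switchings alone is neither continuous nor autonomous, which is fatal for the DBM analysis). Instead, the interpolation is the constrained DBM $H(t)$ itself, and switchings appear only as an analytical device to bound its generator: by It\^o's formula, $\E F(H(t)) - \E F(H(0)) = \int_0^t \E[LF(H(s))]\,\dd s$, and after writing $H(t) \eqdist \ee^{-t/2}H(0) + (1-\ee^{-t})^{1/2}W$ one shows $\E[LF] = \E[Qf] + \text{error}$, where $Q$ is a \emph{global} switching jump-process generator that annihilates the uniform RRG measure, $\E[Qf(A)]=0$. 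The error in matching $Qf \approx LF$ is not a local-resampling Green's function estimate, but a Taylor expansion of $Q$ combined with explicit moment bounds on products $\E(A_{ij}A_{mn}\cdots)$ that quantify the near-independence of the entries; this is where $D = d \wedge N^2/d^3 \geq N^\alpha$ enters.

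Second, and this is the concrete gap: for the \emph{gap} statistics \eqref{e:gap} the Green's-function comparison scheme you invoke does not suffice. Comparing $\E\,\phi\bigl(N^{-k}\tr\prod_j G(z_j)\bigr)$ on scale $\im z \sim N^{-1-\gamma}$ gives, via the Helffer--Sj\"ostrand machinery, the \emph{averaged} correlation functions \eqref{e:corr}; it does not give control over individual consecutive gaps $\lambda_i-\lambda_{i+1}$, because $\lambda_i$ is not a smooth matrix function near degeneracies. You need a level-repulsion estimate. The paper proves one ($\P(N^{-2}\sum_{j\neq i}(\lambda_j-\lambda_i)^{-2} \geq N^{2\tau}) = O(N^{-\tau/2})$ for bulk $i$) by first getting it at time $t = N^{-1+c}$ from the Gaussian-divisible analysis and then transporting it back to $t=0$ using the very same switching/generator-comparison Proposition, applied to a smooth cutoff of $Q_i(H) = N^{-2}\sum_{j\neq i}(\lambda_i-\lambda_j)^{-2}$. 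With that in hand, one applies the generator comparison once more to $F(H) = \phi(N\lambda_i,\dots,N\lambda_{i+n})\cdot\rho(Q_i(H))\cdots\rho(Q_{i+n}(H))$, using the cutoff $\rho$ to make $F$ smooth and its switching-direction derivatives bounded by $N^{O(\tau)}$, which is what makes the time integral small for $t \leq N^{-1-\delta}D^{1/2}$. Without this level-repulsion/cutoff step your argument only delivers Theorem~\ref{thm:corr}, not Theorem~\ref{thm:gap}.
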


Next, let $p_{\#} \equiv p_{\#,N}$ denote the symmetrized joint law of the eigenvalues of the ensemble $\# = \txt{RRG},  \txt{GOE}$.
The correlation functions are defined for $n \in \qq{1, N - 1}$ by
\begin{equation} \label{e:def_corr_func}
  p^{(n)}_\#(\dd\lambda_1, \dots, \dd\lambda_n) \;\deq\; p_\#\pb{\dd\lambda_1,\dots,\dd\lambda_n, \R^{N-1-n}} \,.
\end{equation}

\begin{theorem} \label{thm:corr}
Fix $\alpha>0$, and suppose that $d \in [N^\alpha, N^{2/3-\alpha}]$.
Then, in the limit $N \to \infty$, the locally averaged local correlation functions of the
random $d$-regular graph coincide with those of the GOE.
More precisely, fix a small enough constant $c > 0$, and define $b \equiv b_N \deq N^{-1 + c}$.
Then for any fixed $n \in \N$, $\phi \in C_c^\infty(\R^n)$, and $E \in (-2,2)$ we have
\begin{equation} \label{e:corr}
\frac{1}{2b} \int_{E-b}^{E+b} \dd E'  \int_{\R^n} \phi(x_1, \dots, x_n) 
N^n \pb{p_{\txt{RRG}}^{(n)} - p_{\txt{GOE}}^{(n)}} \pbb{ E'+\frac{\dd x_1}{N\varrho(E)}, \dots, E'+\frac{\dd x_n}{N\varrho(E)} }
\;=\; o(1)
\,.
\end{equation}
\end{theorem}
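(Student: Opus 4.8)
The plan is to follow the three-step dynamical strategy for bulk universality, adapted to the constrained geometry of random regular graphs. Write $H \deq (d-1)^{-1/2}A$. Since the conclusions \eqref{e:gap} and \eqref{e:corr} concern only the nontrivial eigenvalues $\lambda_1 \geq \dots \geq \lambda_{N-1}$ and are insensitive to a rank-one shift, it suffices to work with the restriction of $H$ to the invariant subspace $\f e^\perp$, an $(N-1)$-dimensional real symmetric random matrix with the same nontrivial spectrum. The basic input is the local semicircle law for random regular graphs from \cite{1503.08702}, which I take as given: it controls the Green's function $G(z) = (H-z)^{-1}$ for $\im z \gg N^{-1}$ and thereby yields optimal eigenvalue rigidity in the bulk (localization of $\lambda_i$ near $\gamma_i$ at scale $N^{-1+o(1)}$) and complete delocalization of the associated eigenvectors.

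\emph{Step 1: a Gaussian-divisible model and Dyson Brownian motion.} Introduce the interpolating ensemble $H_t \deq H + \sqrt t\, W$, with $W$ an independent GOE matrix on $\f e^\perp$ normalized as in the statement, so that the eigenvalues of $H_t$ evolve under Dyson Brownian motion with $\beta = 1$. A local law for $H_t$ follows from the one for $H$ by standard arguments, since adding an independent Gaussian component only improves the relevant estimates. Feeding the rigidity of the initial data $H_0 = H$ into the local relaxation (local ergodicity) estimates for Dyson Brownian motion, I would conclude that for $t = N^{-1+c_0}$ with $c_0 > 0$ small, both the locally averaged $n$-point correlation functions and the gap distribution of $H_t$ coincide, in the senses of \eqref{e:corr} and \eqref{e:gap} respectively, with those of the GOE.

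\emph{Step 2: comparison of $H$ with $H_t$.} The heart of the argument is to show that, for $t = N^{-1+c_0}$, the bulk local statistics of $H$ and of $H_t$ agree asymptotically. For Wigner or Erd\H{o}s--R\'enyi matrices this is a Green's function comparison based on independence and moment matching of the entries; here the entries of $A$ are neither Gaussian-matched nor independent, being subject to the hard degree constraints. I would instead use a combinatorial coupling, implemented through switchings, relating a uniform $d$-regular graph to the edge-disjoint union of a uniform $(d-k)$-regular graph and a random $k$-regular graph on the same vertex set, in such a way that the induced perturbation of $H$ is, with high probability, well controlled at the relevant spectral scales. Taking $k = N^{c_0}$ and invoking a local central limit theorem for the $k$-regular piece — essentially a superposition of $\sim k$ independent matchings — its contribution to the restriction of $H$ to $\f e^\perp$, once centered and rescaled, is close to $\sqrt t\, W$; one then transfers this approximate distributional identity to eigenvalue statistics at scale $N^{-1}$ by a resolvent-level continuity (Lindeberg-type) argument in which the correlations created by the regularity constraints are carefully propagated through the expansion.

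The main obstacle is precisely Step 2: unlike in the independent-entry setting, entries of $A$ cannot be resampled one at a time while preserving regularity, so the passage to the Gaussian-divisible model must be effected through local modifications that respect the constraint, and the correlations they generate — together with the absence of exact moment matching — must be controlled uniformly down to the optimal spectral scale $\im z \sim N^{-1+o(1)}$; it is the upper bound $d \leq N^{2/3-\alpha}$ (with $d \geq N^\alpha$ ensuring the semicircle regime and the emergence of the Gaussian component) that renders the accumulated error terms subleading. Granting Steps 1 and 2, Theorem~\ref{thm:corr} follows by combining the correlation-function conclusion of Step 1 with the comparison, and Theorem~\ref{thm:gap} follows in the same way from the gap conclusion of Step 1.
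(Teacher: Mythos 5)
Your outline of the three-step strategy (local law $+$ DBM relaxation $+$ short-time comparison), and your Step 1, match the paper's structure, which combines a local law from \cite{1503.08702} with the fixed-energy universality of \cite{1504.03605}. The genuine gap is in Step 2, and the mechanism you propose there is not the one used in the paper and does not appear viable.

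You propose a \emph{coupling} realizing a uniform $d$-regular graph as the edge-disjoint union of a uniform $(d-k)$-regular graph and a random $k$-regular graph, then arguing by a local CLT that the centered, rescaled $k$-regular piece approximates $\sqrt t\,W$. The variance bookkeeping already rules this out in the relevant regime. If $B$ is the adjacency matrix of the $k$-regular piece, the centered entries of $B/\sqrt{d-1}$ have variance of order $k/(N(d-1))$; to match $\sqrt t\, W$ (entry variance $t/N$) one needs $k \approx t d$. The comparison in step (iii) must cover times up to $t = N^{-1+\delta}$ with $d \leq N^{2/3-\alpha}$, giving $td \leq N^{-1/3-\alpha+\delta} \ll 1$, so the ``number of matchings'' $k$ would have to be far smaller than $1$; taking $k = N^{c_0}$ produces a Gaussian component orders of magnitude too large. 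Independently of this, a superposition of $O(k)$ matchings is nowhere near Gaussian entrywise (the entries are $\{0,\dots,k\}$-valued with strong row-sum dependencies), and the uniform $d$-regular graph is not distributed as such an edge-disjoint union without further conditioning corrections that are themselves of leading order.

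What the paper actually does in place of your Step 2 is a \emph{generator comparison}, not a coupling of matrices. One introduces the Markovian switching jump process with generator $Q$ acting on $d$-regular graphs, proves that the uniform measure is invariant under $Q$ (so $\E[Qf(A)]=0$), and then proves, via Taylor expansion together with switching-based conditioning estimates on the moments of $A$, that $Qf(A) = LF(H) + R$ where $L$ is the generator of the constrained DBM on $\cal H(\f e^\perp)$ and $\E R$ is small (of order $D^{-1/2}N^{1+\epsilon}$ times Sobolev-type seminorms of $F$). Since $H(t) \eqdist \ee^{-t/2}H(0) + (1-\ee^{-t})^{1/2}W$ and the constrained GOE is $L$-invariant, this yields $\tfrac{\dd}{\dd t}\E F(H(t)) = \E[LF_W(H)] = \E[Qf_W(A)] + \E R = \E R$, hence stability of smooth Green's-function observables up to times $t \leq N^{-1-\delta}D^{1/2}$. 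The hard constraints are never ``Gaussianized'' at the matrix level; they are respected exactly by the switching dynamics, and the entire comparison lives at the level of expectations of observables. This is the missing idea in your proposal, and without it (or something playing the same role) Step 2 does not close.

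Finally, a smaller point: your Step 2 also needs a level-repulsion input to handle the gap statistics (Theorem~\ref{thm:gap}), which in the paper is obtained by propagating a cutoff version of $Q_i(H) = N^{-2}\sum_{j\neq i}(\lambda_j-\lambda_i)^{-2}$ along the same generator comparison; a Lindeberg/resolvent-continuity argument as you sketch does not by itself supply this.
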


For the GOE, the eigenvalue correlation functions are known explicitly; see e.g.\ \cite{MR2129906}.
Hence, the quantities for the GOE appearing 
on the left-hand sides of \eqref{e:gap} and \eqref{e:corr}
can be computed explicitly.
In fact, the eigenvalue gap distribution has only been computed in the sense of averages over the gap index; 
for the GUE, the computation for a fixed gap was performed in \cite{MR3101841}.

The proofs of Theorems~\ref{thm:gap}--\ref{thm:corr} follow the general three-step strategy
developed in \cite{MR2810797,MR2639734,MR2662426};
see e.g.\ \cite{MR2917064} for a survey.
In our setup, the strategy is formulated precisely in Section~\ref{sec:strategy}.
The general idea is to study the convergence of eigenvalue statistics under 
Dyson Brownian motion (DBM) \cite{MR0148397}.
The three steps consist of
(i) a \emph{local law} providing precise estimates on the eigenvalue density down to the scale
of individual eigenvalues, as well as the complete delocalization of the eigenvectors;
(ii) the universality of the local eigenvalue statistics after the short time $t=N^{-1+\delta}$;
and (iii) effective approximation of the local eigenvalue statistics of the original
matrix ensemble at $t = 0$ by the one evolved up to time $t=N^{-1+\delta}$.

In all previous instances of the three-step strategy outlined above,
the independence of the matrix entries was crucial for steps (i) and (iii).
For the random regular graph, a new approach is required for both of these steps,
the last one of which is the main content of this paper.
The local law for random regular graphs was recently established in \cite{1503.08702}, thus performing step~(i).
As for step~(ii), the convergence of the local eigenvalue statistics under DBM
with deterministic initial data was recently established in \cite{1504.03605},
under the sole assumption that the eigenvalue density be bounded at the scale $N^{-1+\delta}$. 
Therefore the local semicircle law provides sufficient control
on the eigenvalues so that using \cite{1504.03605, 1503.08702} we can perform step~(ii).

Thus, the main difficulty is step~(iii). 
There are several known methods for performing this step,
including Lindeberg's proof of the central limit theorem combined with higher moment matching conditions \cite{MR2784665}, 
or the Green's function comparison theorem \cite{MR2981427}.
For short times, 
a more direct method is to prove the stability of the eigenvalues under 
the DBM  by analysing the dynamics of the individual matrix entries \cite{BY2016}.
In all of these approaches, the independence of the matrix entries is used in  an essential way. 
In contrast, the entries of random regular graphs are subject to hard constraints,
and are therefore not independent. 
Tracking carefully the dependence of the matrix entries (using the methods from \cite{1503.08702}),
we  find  that the eigenvalue  evolution  
is stable under a \emph{constrained DBM}, for times $t \leq N^{-1 + \delta}$.
Here, by \emph{stability}, we mean that the changes in the local eigenvalue statistics are negligible.

This stability can also be interpreted as follows: there is a class of
reasonably well-behaved observables, which completely characterize the
local bulk eigenvalue statistics, and whose time evolution under the
constrained DBM can be well approximated by a \emph{switching dynamics}
of random regular graphs.  We note that it has been
proposed that, for random regular graphs, the dynamics provided by DBM
should be replaced with a switching dynamics; see in particular
\cite{1501.04907}.
However, obtaining rigorous results on the local eigenvalue statistics
using only a switching dynamics is difficult, because the induced
eigenvalue process is neither continuous nor autonomous \cite{1503.06417}.
Our strategy crucially relies on the fact that
the eigenvalue process under DBM is continuous and satisfies an
autonomous system of SDEs. 

Theorem \ref{thm:gap} and Theorem \ref{thm:corr} hold also for  sparse random matrices with independent entries;
see \cite{MR3429490}.
We will use parts of that analysis which are applicable here.
The main effort and novelty of this paper is in the control of eigenvalues under
constrained DBM up to time $t=N^{-1+\delta}$ using switchings.

\subsection{Further related results}

Large regular graphs have been proposed as a testing ground for
quantum chaos \cite{MR3204183}.
It is conjectured \cite{PhysRevLett.52.1} that chaotic quantum systems
(i.e.\ quantum systems obtained by quantization of ergodic classical systems)
exhibit random matrix statistics.
Regular graphs are random matrices with a local structure, and as such a step in
the direction of understanding highly structured systems.
It is believed that the eigenvalues of random $d$-regular graphs obey random matrix statistics for any $d \geq 3$.
Indeed, there is numerical evidence that
the local spectral statistics in the bulk of the spectrum are governed by those of the GOE \cite{MR1691538,MR2647344},
and further that the distribution of the appropriately rescaled second largest eigenvalue 
converges to the Tracy-Widom distribution of the GOE \cite{MR2433888}.
Our assumption $d \geq N^{\alpha}$ for arbitrary $\alpha>0$ is purely technical,
since some of the results used in our proof have only been established up to multiplicative errors of order $N^c$ (with arbitrary $c>0$).
We believe that our results can be extended to $d \geq (\log N)^{O(1)}$ with the same method.
Furthermore, our results also extend to the other models of random regular
graphs considered in \cite{1503.08702}, such as the permutation model.
Other results about the eigenvalue and eigenvector distribution of
$d$-regular graphs on mesoscopic and macroscopic scales,
with $d \to \infty$ and with $d$ fixed, are discussed in \cite{1503.08702}.

Our proof relies on \emph{switchings} that leave the random regular graph invariant.
Switchings of random regular graphs were introduced to obtain enumeration
results in \cite{MR790916}; see \cite{MR1725006} for a survey of subsequent developments.
They are also used for simulation of random regular graphs; 
see e.g.\ \cite{MR2334585} and references therein.
Recently, switchings were used to
bound the singularity probability of directed random regular graphs \cite{1411.0243}.
They also played an important role in our recent proof of the
local semicircle law for random regular graphs \cite{1503.08702}.

\bigskip

\paragraph{Notation}

We use $a=O(b)$ to mean that there exists an absolute constant $C>0$ such that $\abs{a} \leq C b$,
and $a \gg b$ to mean that $a \geq C b$ for some sufficiently large absolute constant $C>0$.
We use $c$ for an arbitrarily small positive constant that may change from line to line.
Moreover, we abbreviate $\qq{a,b} \deq [a,b] \cap \Z$.
We use the standard notations $a \wedge b \deq \min \{a,b\}$ and $a \vee b \deq \max \{a,b\}$.
Every quantity that is not explicitly a constant may depend on $N$, which we almost always omit from our notation.
Throughout the paper, we tacitly assume $N\gg 1$. Unless otherwise stated, all sums of indices are over the set $\qq{1,N}$.

\section{Strategy of proof}
\label{sec:strategy}

Our goal is to prove that, in the bulk on the spectrum,
the local eigenvalue statistics of $A/\sqrt{d-1}$ are the same as those of the GOE.
As mentioned in Section~\ref{sec:intro}, in order to show this, we interpolate between the RRG and the GOE
using Dyson Brownian motion, or more precisely its Ornstein-Uhlenbeck version.

\subsection{Constrained Dyson Brownian motion}
\label{sec:cDBM}

The adjacency matrix $A$ of a regular graph is subject to the hard constraints
that its rows and columns have fixed sum
(i.e.\ it has the eigenvector $\f e = N^{-1/2}(1,\dots, 1)^*$).
Therefore, instead of the usual Dyson Brownian motion, we use Dyson Brownian motion
constrained to the subspace of symmetric matrices
whose row and column sums vanish.

We begin with the notion of an Ornstein-Uhlenbeck process on a general finite-dimensional space.

\begin{definition} \label{def:OU_process}
Let $\cal H$ be a real finite-dimensional Hilbert space.
Let $(\f f_\alpha)_{\alpha}$ be an orthonormal basis of $\cal H$.
\begin{enumerate}
\item
Let $(w_\alpha)_\alpha$ be i.i.d.\ standard normal random variables.
Then we define the \emph{standard Gaussian measure on $\cal H$} as $W \deq \sum_\alpha w_\alpha \f f_\alpha$.
\item
Let $(h_\alpha)_{\alpha}$ be i.i.d.\ Ornstein-Uhlenbeck processes satisfying
\begin{align*}
  \dd h_{\alpha} \;=\; 
   \dd B_{\alpha}-\frac{1}{2} h_{\alpha} \, \dd t\,,
\end{align*}
where $(B_\alpha)_{\alpha}$ is a family of i.i.d.\ standard Brownian motions.
Then we define the \emph{standard Ornstein-Uhlenbeck process on $\cal H$} as $H(t) \deq \sum_{\alpha} h_\alpha(t) \, \f f_\alpha$.
\end{enumerate}
\end{definition}

It is easy to verify that
the laws of $W$ and the process $H$ do not depend on the choice of the orthonormal basis $(\f f_\alpha)$,
and that the standard Gaussian measure is invariant under the standard Ornstein-Uhlenbeck process.
We use these properties tacitly from now on.

For example, let $\cal H \deq \{ H \in \R^{N\times N} \col H= H^* \}$ be the Hilbert space of real symmetric $N \times N$ matrices
with inner product
\begin{equation} \label{e:inner_prod}
\scalar{X}{Y} \;\deq\;
\frac{N}{2} \tr (X Y)\,.
\end{equation}
Then the usual $N$-dimensional Dyson Brownian motion is the standard Ornstein-Uhlenbeck process $H(t)$ on $\cal H$.
More explicitly, $H(t)$ is the Markov process satisfying the SDE
\begin{equation} \label{e:standard_DBM}
\dd H \;=\; \frac{1}{\sqrt{N}} \, \dd B  - \frac{1}{2} H \, \dd t\,,
\end{equation}
where $B(t)$ is Brownian motion on the space of $N \times N$ real symmetric matrices with
quadratic covariation $\scalar{B_{ij}}{B_{kl}}(t) = (\delta_{ik} \delta_{jl} + \delta_{il} \delta_{jk}) t$.

More intrinsically, given a finite-dimensional Hilbert space $V$,
we denote the Hilbert space of symmetric linear maps on $V$ 
with inner product \eqref{e:inner_prod} by $\cal H(V)$.
Then we define \emph{Dyson Brownian motion (DBM) on $V$} to be the standard
Ornstein-Uhlenbeck process on $\cal H(V)$. With this point of view, the usual $N$-dimensional DBM is the DBM on $V=\R^N$, and the constrained DBM is the DBM on $V = \f e^\perp$.
Note that the normalization $N$ in \eqref{e:inner_prod} does not need to agree with the dimension of $V$,
which is $N-1$ for $V = \f e^\perp$.
We make the convention to always normalize the inner product \eqref{e:inner_prod} by $N$,
no matter the dimension of $V$, and always denote the dimension of $V$ by $M$.
Finally, we denote the inner product on $V$ by $\f v \cdot \f w$ for $\f v, \f w \in V$.

\begin{definition}[Constrained DBM and GOE] \label{def:constr_DBM}
The \emph{constrained DBM} is the DBM on $\f e^\perp$,
i.e.\ the standard Ornstein-Uhlenbeck process on $\cal H(\f e^\perp)$
with inner product \eqref{e:inner_prod}.
The \emph{constrained GOE} is the standard Gaussian measure on $\cal H(\f e^\perp)$ with inner product \eqref{e:inner_prod}.
\end{definition}

Thus, up to a change of basis, the constrained DBM is equivalent to the usual $(N - 1)$-dimensional DBM, with the minor difference of normalization by $N$ rather than $N-1$.
However, since the definition of the $d$-regular graph is tied to the standard basis of $\R^N$,
it is frequently convenient to work with the constrained DBM in the standard basis of $\R^N$.

Next, in accordance with the decomposition $\R^N = \f e^\perp \oplus \spn(\f e)$, we have a canonical isomorphism $H \mapsto \tilde H \deq H \oplus 0$ from $\cal H(\f e^\perp)$ to the set of matrices
\begin{equation}
\cal M \;\deq\; \{ H \in \R^{N\times N} \col H=H^*, H\f e = 0 \}\,.
\end{equation}
Throughout this paper, we tacitly identify $H$ and $\tilde H$.

We denote by $\cal C^n(\cal M)$ the space of functions $F \col \cal M \to \C$
with continuous bounded derivatives up to order $n$.
Sometimes it will be convenient to compute derivatives of functions $F \in \cal C^n(\cal M)$
in directions of $\R^{N \times N}$ that do not lie in $\cal M$, which is made possible by the following convention.

\begin{definition}
Let $P = I-\f e \f e^*$ be the orthogonal projection from $\R^N$ onto $\f e^\perp$.
We extend any function $F \in \cal C^n(\cal M)$ to a $\cal C^n$-function on $\R^{N \times N}$ through
\begin{equation*}
H \;\longmapsto\; F \pbb{\frac12 P(H + H^*)P}\,,
\end{equation*}
and denote this extended function also by $F$. Finally, for any $F \in \cal C^1(\cal M)$ and $i,j \in \qq{1,N}$,
we use the abbreviation $\partial_{ij} F(H) \equiv \frac{\partial F}{\partial H_{ij}} (H)$.
\end{definition}

From now on, we take $W$ to be the constrained GOE and $H \equiv H(t)$ to be the constrained DBM,
with initial condition
\begin{equation} \label{e:Hdef}
  H(0) \;\deq\;
  \frac{1}{\sqrt{d-1}} (A-d \f e \f e^*) \;\in\; \cal M\,.
\end{equation}
Here $A$ is the adjacency matrix of the random $d$-regular graph.
In particular, the eigenvalues of $H(0)$ as an element of $\cal H(\f e^\perp)$ are the rescaled nontrivial eigenvalues of $A$.

\subsection{Switchings}
Simple switchings are an especially convenient generating set of $\cal M$;
they play a central role throughout this paper.
For any $i,j,k,l \in \qq{1,N}$ we define the \emph{simple switching} $\xi_{ij}^{kl} \in \cal M$ by
\begin{equation} \label{e:Xijkl-def}
  \xi_{ij}^{kl} \;\deq\; \Delta_{ij}+\Delta_{kl}-\Delta_{ik}-\Delta_{jl}
  \qquad
  \text{where} \quad
  (\Delta_{ij})_{pq} \;\deq\; \delta_{ip}\delta_{jq} + \delta_{iq}\delta_{jp}
  \,.
\end{equation}
The action of a simple switching $\xi_{ij}^{kl}$ on an adjacency matrix,
given by $A \mapsto A + \xi_{ij}^{kl}$, amounts to adding the edges $\{i,j\}, \{k,l\}$ and removing the edges $\{i,k\}, \{j,l\}$;
this is illustrated in Figure~\ref{fig:switch1} and made precise in \eqref{e:def_switching} below.
In this section, the four vertices need not be distinct.

\begin{figure}[t]
\begin{center}
\input{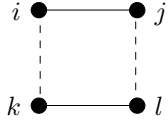}
\end{center}
\caption{A simple switching is given by replacing the solid edges by the dashed edges.
\label{fig:switch1}}
\end{figure}

Next, we define the abbreviations 
\begin{equation}
H_{ij}^{kl} \;\deq\; \tr (\xi_{ij}^{kl} H) \,, \qquad \partial_{ij}^{kl} \;\deq\; \partial_{\xi_{ij}^{kl}} \;=\; \tr (\xi_{ij}^{kl} \partial)\,,
\end{equation}
for all $i,j,k,l \in \qq{1,N}$. Here $\partial_X$ denotes the directional derivative in the direction $X$. Explicitly,
expressed in the standard basis on $\R^N$, we have
\begin{align} \label{e:HXdef}
  H_{ij}^{kl} &\;=\; 2(H_{ij}+H_{kl}-H_{ik}-H_{jl})\,, 
  \\
  \partial_{ij}^{kl} F(H) 
  &\;=\; 2(\partial_{ij}+\partial_{kl}-\partial_{ik}-\partial_{jl}) F(H)\,,
\end{align}
where $F \in \cal C^1(\cal M)$.
With these abbreviations, the generator of the constrained DBM can be expressed in terms of switchings
as stated in the following proposition.

\begin{proposition} \label{prop:cDBM}
The generator of the constrained DBM from Definition \ref{def:constr_DBM} is
\begin{equation} \label{e:L-cDBM}
L \;\deq\; \frac{1}{16N^3} \sum_{i,j,k,l} (\partial_{ij}^{kl})^2 - \frac{1}{32 N^2} \sum_{i,j,k,l} H_{ij}^{kl} \partial_{ij}^{kl}\,.
\end{equation}
This means that for any $F \in \cal C^2(\cal M)$ we have
\begin{equation} \label{e:L_is_generator}
\frac{\dd}{\dd t} \E \q{F(H(t))} \;=\; \E \q{L F(H(t))}\,.
\end{equation}
\end{proposition}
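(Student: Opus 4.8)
The plan is to compute the generator of the standard Ornstein--Uhlenbeck process on $\cal H(\f e^\perp)$ in an arbitrary orthonormal basis, and then to rewrite it in terms of the (redundant) family of simple switchings by means of a resolution-of-the-identity formula.

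First I would record the generator in coordinates. Fix an orthonormal basis $(\f f_\alpha)$ of $\cal H(\f e^\perp)$ for the inner product \eqref{e:inner_prod}, and write $H(t)=\sum_\alpha h_\alpha(t)\,\f f_\alpha$, where the $h_\alpha$ are the i.i.d.\ Ornstein--Uhlenbeck processes of Definition~\ref{def:OU_process}; these have quadratic covariation $[h_\alpha,h_\beta](t)=\delta_{\alpha\beta}t$. For $F\in\cal C^2(\cal M)$, Itô's formula applied to $\dd h_\alpha=\dd B_\alpha-\tfrac12 h_\alpha\,\dd t$ gives
\begin{equation*}
  \dd F(H(t)) \;=\; \sum_\alpha \partial_{\f f_\alpha}F(H(t))\,\dd B_\alpha \;+\; (LF)(H(t))\,\dd t\,, \qquad L \;=\; \tfrac12\sum_\alpha\partial_{\f f_\alpha}^2 \;-\; \tfrac12\sum_\alpha\scalar{H}{\f f_\alpha}\,\partial_{\f f_\alpha}\,,
\end{equation*}
where $\partial_{\f f_\alpha}$ is the directional derivative in the direction $\f f_\alpha\in\cal M$. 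Since $F$ and its derivatives are bounded, the stochastic integral is a martingale, so taking expectations yields \eqref{e:L_is_generator}; moreover $\sum_\alpha\partial_{\f f_\alpha}^2$ and the Euler vector field $\sum_\alpha\scalar{H}{\f f_\alpha}\partial_{\f f_\alpha}$ are independent of the choice of basis.

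The main input is the identity
\begin{equation*}
  \sum_{i,j,k,l}\scalar{\xi_{ij}^{kl}}{X}\,\xi_{ij}^{kl} \;=\; 8N^3\, X \qquad\text{for all } X\in\cal M\,,
\end{equation*}
equivalently $\sum_{i,j,k,l}\scalar{\xi_{ij}^{kl}}{X}\scalar{\xi_{ij}^{kl}}{Y}=8N^3\scalar{X}{Y}$ for all $X,Y\in\cal M$, which I would prove by a direct expansion. By \eqref{e:HXdef} and \eqref{e:inner_prod}, $\scalar{\xi_{ij}^{kl}}{X}=\tfrac N2\tr(\xi_{ij}^{kl}X)=N(X_{ij}+X_{kl}-X_{ik}-X_{jl})$, so the left-hand side of the bilinear identity is $N^2$ times the sum over $i,j,k,l$ of $(X_{ij}+X_{kl}-X_{ik}-X_{jl})(Y_{ij}+Y_{kl}-Y_{ik}-Y_{jl})$. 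Expanding this into sixteen terms $\pm X_pY_q$ with $p,q\in\{ij,kl,ik,jl\}$: since $X\f e=Y\f e=0$, all row and column sums of $X$ and $Y$ vanish, and in each of the twelve terms with $p\neq q$ some summation index appears in exactly one of the two factors, so summing over that index produces a vanishing row or column sum; the four remaining terms $X_{ij}Y_{ij},X_{kl}Y_{kl},X_{ik}Y_{ik},X_{jl}Y_{jl}$ each equal $N^2\tr(XY)$ by symmetry of $X$ and $Y$, for a total of $4N^2\tr(XY)$, hence $4N^4\tr(XY)=8N^3\scalar{X}{Y}$. Since each $\xi_{ij}^{kl}\in\cal M$ and the inner product is nondegenerate on $\cal M$, this gives the claimed operator identity (and in particular the switchings span $\cal M$).

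Finally I would assemble $L$. Writing $\partial_{ij}^{kl}=\partial_{\xi_{ij}^{kl}}=\sum_\alpha\scalar{\xi_{ij}^{kl}}{\f f_\alpha}\partial_{\f f_\alpha}$ and applying the identity of the previous paragraph with $X=\f f_\alpha,Y=\f f_\beta$ yields $\sum_{i,j,k,l}(\partial_{ij}^{kl})^2=8N^3\sum_\alpha\partial_{\f f_\alpha}^2$, and applying it with $X=H$ together with linearity of $Z\mapsto\partial_Z$ yields $\sum_{i,j,k,l}\scalar{H}{\xi_{ij}^{kl}}\partial_{ij}^{kl}=8N^3\sum_\alpha\scalar{H}{\f f_\alpha}\partial_{\f f_\alpha}$. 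Substituting these into the formula for $L$ from Step~1 and using $\scalar{H}{\xi_{ij}^{kl}}=\tfrac N2 H_{ij}^{kl}$ gives exactly \eqref{e:L-cDBM}. I expect the only delicate point to be the bookkeeping in the expansion above: three numerical conventions enter the constant multiplicatively --- the $N/2$ normalization in \eqref{e:inner_prod}, the factor $2$ relating $\partial_{ij}^{kl}$ and $\tr(\xi_{ij}^{kl}\,\cdot\,)$ to the entrywise quantities in \eqref{e:HXdef}, and the redundancy of the switching family, which forces the constant $8N^3$ to be computed directly rather than by a dimension count; everything else is a routine application of Itô's formula and linear algebra.
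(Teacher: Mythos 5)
Your proof is correct and takes a genuinely different route from the paper's. The paper's proof conjugates the standard $(N-1)$-dimensional DBM by an explicit orthogonal matrix $R$ with $R \f e_N = \f e$, computes the resulting quadratic covariation $\dd\langle H_{ij},H_{kl}\rangle = \frac1N(\delta_{ik}-\frac1N)(\delta_{jl}-\frac1N)\dd t + \frac1N(\delta_{il}-\frac1N)(\delta_{jk}-\frac1N)\dd t$ directly in the standard basis of $\R^N$, and then rewrites the resulting generator $\frac{1}{N^3}\sum_{i,j,k,l}\partial_{ij}(\partial_{ij}+\partial_{kl}-\partial_{il}-\partial_{jk}) - \frac12\sum_{i,j}H_{ij}\partial_{ij}$ into the switching form by repeatedly invoking $\sum_j H_{ij}=0$; the constraint of being in $\cal M$ is thus used after the Itô step, to massage the resulting expression. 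You instead keep the Itô step basis-free (so the generator is $\frac12\sum_\alpha\partial_{\f f_\alpha}^2 - \frac12\sum_\alpha\scalar{H}{\f f_\alpha}\partial_{\f f_\alpha}$ in any orthonormal basis of $\cal H(\f e^\perp)$) and isolate the one piece of linear algebra that everything hinges on: the simple switchings $\xi_{ij}^{kl}$ form a tight frame for $\cal M$ with frame constant $8N^3$, that is $\sum_{i,j,k,l}\scalar{\xi_{ij}^{kl}}{X}\scalar{\xi_{ij}^{kl}}{Y} = 8N^3\scalar{X}{Y}$ for $X,Y\in\cal M$. Your expansion of this identity is correct: the twelve cross terms $X_pY_q$ ($p\neq q$) each contain a free index present in only one factor, and summing that index kills the term because row/column sums of elements of $\cal M$ vanish; the four diagonal terms each give $N^2\tr(XY)$, yielding $4N^4\tr(XY)=8N^3\scalar{X}{Y}$ with the $N/2$ normalization of \eqref{e:inner_prod}. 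The constraint is thus used in your argument precisely to establish the frame identity, and the conversion to switching coordinates is then mechanical. Your route is somewhat cleaner structurally; the paper's is more hands-on but requires no detour through an abstract basis. Both are valid, and the numerology ($8N^3$, hence the $\frac{1}{16N^3}$ and $\frac{1}{32N^2}$ prefactors, the latter via $\scalar{H}{\xi_{ij}^{kl}}=\frac N2 H_{ij}^{kl}$) checks out in your version.
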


\begin{proof}
Let $\hat H(t)$ be the standard Ornstein-Uhlenbeck process from Definition~\ref{def:OU_process}
on the space $\cal H(\R^{N - 1})$ with inner product \eqref{e:inner_prod}.
As in the example \eqref{e:standard_DBM}, we obtain the quadratic covariation
\begin{equation} \label{e:hatH-qc}
\scalar{\hat H_{ij}}{\hat H_{kl}}(t) \;=\; \frac{1}{N} (\delta_{ik} \delta_{jl} + \delta_{il} \delta_{jk}) t \,.
\end{equation}

Next, let $R \in O(N)$ satisfy $R \f e_N = \f e$. Then, since the inner product \eqref{e:inner_prod}
is invariant under orthogonal conjugations, we can express the constrained DBM as $H(t) = R(\hat H(t) \oplus 0) R^*$.
We abbreviate $H \equiv H(t)$ and write for $F \in \cal C^2(\cal M)$, using It\^{o} calculus,
\begin{equation*}
  \dd \, \E F(H) \;=\; - \frac{1}{2} \sum_{i,j} \E \qb{H_{ij} (\partial_{ij}F)(H)} \,\dd t
  + \frac{1}{2} \sum_{i,j,k,l} \E \qb{(\partial_{ij}\partial_{kl}F)(H) \, \dd \langle H_{ij}, H_{kl}\rangle }\,.
\end{equation*}
By definition of $R$ we have $R_{iN} = \frac{1}{\sqrt{N}}$ for all $i$, so that \eqref{e:hatH-qc} yields 
\begin{align*}
  \dd \langle H_{ij}, H_{kl}\rangle
  &\;=\; \frac{1}{N} \sum_{m,n = 1}^{N-1} \pb{R_{im} R_{jn} R_{km}R_{ln} + R_{im} R_{jn} R_{kn}R_{lm}} \, \dd t
  \nonumber
  \\
  &\;=\;
 \frac{1}{N} \pbb{\delta_{ik}-\frac{1}{N}}\pbb{\delta_{jl}-\frac{1}{N}} \, \dd t +
 \frac{1}{N} \pbb{\delta_{il}-\frac{1}{N}}\pbb{\delta_{jk}-\frac{1}{N}} \, \dd t
  \,.
\end{align*}
Thus, for any $F \in \cal C^2(\cal M)$ we have \eqref{e:L_is_generator} with
\begin{equation} \label{e:gen1}
L \;=\; \frac{1}{N^3} \sum_{i,j,k,l} \partial_{ij}(\partial_{ij} + \partial_{kl} - \partial_{il} - \partial_{jk})
- \frac{1}{2} \sum_{i,j} H_{ij} \partial_{ij} \,.
\end{equation}
Finally, using $\sum_{j} H_{ij} = \sum_{j} H_{ji} = 0$ for $H \in \cal M$,
we observe that $L$ from \eqref{e:gen1} can be rewritten as \eqref{e:L-cDBM}.
\end{proof}

\subsection{Outline of proof of Theorems~\ref{thm:gap}--\ref{thm:corr}}

Theorems~\ref{thm:gap}--\ref{thm:corr}
are an immediate consequence of the following two propositions.
As in \cite{1503.08702}, we set
\begin{equation}
  \label{e:D-UM}
  D \;\deq\; d \wedge \frac{N^2}{d^3}
  \,.
\end{equation}
We always assume $d \in [N^\alpha, N^{2/3-\alpha}]$, which implies $D \geq N^\alpha$.
To state the two propositions concisely, we introduce the following definition.
It will also be convenient in the proofs.

\begin{definition}
Given $H \in \cal M$, we denote by $\lambda_1 \geq \cdots \geq \lambda_{N - 1}$ the eigenvalues of $H \vert_{\f e^\perp}$.
Consider two random matrix ensembles $H_1$ and $H_2$ in $\cal M$. Then we say that
\begin{enumerate}
\item
the \emph{bulk eigenvalue gap statistics of $H_1$ and $H_2$ coincide} if 
for any $n \in \N$, $\phi \in C^\infty_c(\R^n)$, and $\kappa > 0$, we have
\begin{equation} \label{e:gap2}
\pb{\E_{H_1} - \E_{H_2}} \, \phi\pb{N\varrho(\gamma_i)(\lambda_i-\lambda_{i+1}), \dots, N\varrho(\gamma_i)(\lambda_i-\lambda_{i+n})} \;=\; o(1)
\end{equation}
as $N \to \infty$, uniformly in $i \in \qq{\kappa N, (1-\kappa)N}$;
\item
the \emph{averaged bulk eigenvalue correlation functions of $H_1$ and $H_2$ coincide}
if for any $n \in \N$, $\phi \in C^\infty_c(\R^n)$, $c > 0$ small enough, and $E \in (-2,2)$, we have for $b\deq N^{-1+c}$
\begin{equation} \label{e:corr2}
  \frac{1}{2b} \int_{E-b}^{E+b} \dd E'  \int_{\R^n} \phi(x_1, \dots, x_n) 
N^n \pb{p_{H_1}^{(n)} - p_{H_2}^{(n)}} \pbb{ E'+\frac{\dd x_1}{N\varrho(E)}, \dots, E'+\frac{\dd x_n}{N\varrho(E)} }
\;=\; o(1)\,,
\end{equation}
where the correlation functions $p_{H_i}^{(n)}$ are defined as in \eqref{e:def_corr_func}.
\end{enumerate}
Moreover, we say that the \emph{bulk eigenvalue statistics of $H_1$ and $H_2$ coincide} if (i) and (ii) hold.
\end{definition}

\begin{proposition} \label{prop:comp}
For any fixed $\delta > 0$ and $t \leq N^{-1-\delta}D^{1/2}$, the bulk eigenvalue statistics of $H(0)$ and $H(t)$ coincide.
\end{proposition}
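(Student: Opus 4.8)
The plan is to run step~(iii) of the three-step strategy: control the time derivative of suitable eigenvalue observables along the constrained DBM, using the switching-invariance of the uniform RRG in place of the Gaussian integration by parts that is available when the matrix entries are independent. \textbf{Reduction to resolvent observables.} First I would reduce \eqref{e:gap2} and \eqref{e:corr2}, applied with $H_1 = H(0)$ and $H_2 = H(t)$, to an estimate of the form $\E F(H(t)) - \E F(H(0)) = o(1)$ for $F$ ranging over a class of test functions that determines the local bulk statistics: smooth compactly supported functions of finitely many entries of the resolvent $G(z) = (H\vert_{\f e^\perp} - z)^{-1}$ at spectral parameters $z = E + \ii\eta$ with $E$ in the bulk and $\eta \geq N^{-1+c}$. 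This reduction does not see the constraint structure and follows the analysis of sparse matrices with independent entries in \cite{MR3429490}, together with the local semicircle law of \cite{1503.08702}; the point of the scale $\eta \geq N^{-1+c}$ is that, by the local law and Ward identities, such $F$ extends to a function in $\cal C^\infty(\cal M)$ whose switching derivatives $\partial_{ij}^{kl}F$ and their products are bounded by a fixed power of $N$.

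\textbf{Time derivative and switching identity.} By Proposition~\ref{prop:cDBM}, $\tfrac{\dd}{\dd s}\E F(H(s)) = \E\q{L\,F(H(s))}$ with $L$ as in \eqref{e:L-cDBM}, so it is enough to show $\int_0^t \absb{\E\q{L\,F(H(s))}}\,\dd s = o(1)$. To estimate $\E\q{L\,F(H(s))}$ I would use the following exact identity in place of integration by parts. Since the constrained DBM is the \emph{linear} Ornstein--Uhlenbeck flow on $\cal M$, perturbing the initial data \eqref{e:Hdef} by a simple switching shifts the solution only through its decaying deterministic part: running \eqref{e:Hdef} with $A$ replaced by $A + \xi_{ij}^{kl}$ under the \emph{same} Brownian motion produces $H(s) + c_s\,\xi_{ij}^{kl}$, where $c_s \deq \ee^{-s/2}(d-1)^{-1/2}$. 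Combining this with the fact that $A \mapsto A + \xi_{ij}^{kl}$ is a bijection from the $d$-regular graphs for which this switching is admissible onto those for which its reverse is admissible --- so the uniform RRG law is preserved --- and then symmetrizing over $(i,j,k,l)$ (using that the relabelling $(i,j,k,l)\mapsto(i,k,j,l)$ exchanges a switching with its reverse and sends $\xi_{ij}^{kl}$ to $-\xi_{ij}^{kl}$), one gets for every $s \geq 0$
\begin{equation*}
  \sum_{i,j,k,l} \E\qBB{ \ind{\xi_{ij}^{kl}\text{ admissible for }A}\,\pB{ F\pb{H(s)+c_s\,\xi_{ij}^{kl}} - F\pb{H(s)} } } \;=\; 0\,.
\end{equation*}

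\textbf{Matching against $L$, and conclusion.} Taylor expanding the bracket to third order in $c_s\xi_{ij}^{kl}$ turns this into a relation among $\sum_{ijkl}\E[\ind{\cdots}\,\partial_{ij}^{kl}F]$, $\sum_{ijkl}\E[\ind{\cdots}\,(\partial_{ij}^{kl})^2F]$ and a cubic remainder, all evaluated at $H(s)$. I would compare this with the two pieces of $L$. Using $\sum_j \partial_{ij}F = 0$ for $F \in \cal C^1(\cal M)$, the switching-drift $-\tfrac{1}{32N^2}\sum_{ijkl} H_{ij}^{kl}\partial_{ij}^{kl}F$ collapses to a multiple of $\sum_{ij}H_{ij}\partial_{ij}F$, which --- through the identity above, after replacing the admissibility indicator by $A_{ik}A_{jl}(1-A_{ij})(1-A_{kl})$ up to degenerate four-tuples, and re-expressing $A$ through $H(s)$ --- matches, at leading order, a constant multiple of $\sum_{ijkl}\E[\ind{\cdots}(\partial_{ij}^{kl})^2F]$; and the switching-Laplacian $\tfrac{1}{16N^3}\sum_{ijkl}(\partial_{ij}^{kl})^2F$ matches the same quantity up to the density of admissible switchings. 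These two contributions cancel at leading order --- which is essentially why the generator in Proposition~\ref{prop:cDBM} has its particular form --- and what remains is a sum of error terms: (a) fluctuations of the admissibility indicators around their conditional density, and the replacement of the $A$-dependent weights by functions of $H(s)$; (b) the degenerate four-tuples with coincident indices; and (c) the cubic and higher Taylor remainders, which involve products of three or more switching derivatives and the combinatorics of iterated switchings. Each of these is estimated using the local semicircle law for $H(s)$ --- which follows from the law for $H(0)$ in \cite{1503.08702} together with its stability under the constrained DBM for $s \leq t$ --- and the edge-, path-, and switching-count bounds of \cite{1503.08702}; the net effect is that each error term is $O(N^{1+c}D^{-1/2})$, i.e.\ smaller by a factor $D^{-1/2}$ (with $D$ as in \eqref{e:D-UM}) than the individual cancelled terms, which are $O(N^{1+c})$. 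Hence $\absb{\E\q{L\,F(H(s))}} \leq N^{1+c}D^{-1/2}$ uniformly in $s \in (0,t]$, and integrating over $[0,t]$ with $t \leq N^{-1-\delta}D^{1/2}$ gives $N^{c-\delta} = o(1)$ once $c$ is small relative to $\delta$, which closes the reduction. I expect the error analysis here to be the main obstacle: because the entries of $A$ are not independent, the switching-invariance has to do everything that Gaussian integration by parts does in the independent case, and one must track carefully how the hard $d$-regularity constraint is propagated by the short-time constrained DBM and how the combinatorics of admissible and iterated switchings interact with the delocalization and resolvent bounds of \cite{1503.08702} --- this is precisely where the parameter $D$ and the threshold $t \leq N^{-1-\delta}D^{1/2}$ enter.
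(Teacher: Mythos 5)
Your proposal correctly identifies the structural heart of the paper's argument: estimate $\frac{\dd}{\dd s}\E[F(H(s))]=\E[LF(H(s))]$ using the switching-invariance of the uniform RRG in place of the Gaussian (or moment-matching) tools that independence would provide, plus the linearity of the constrained OU flow to transport a switching at $s=0$ to the shift $H(s)\mapsto H(s)+c_s\,\xi_{ij}^{kl}$ with $c_s=\ee^{-s/2}(d-1)^{-1/2}$. Your switching identity is correct, and the overall error scale $O(D^{-1/2}N^{1+c})$ and time threshold are also the right ones (this is Proposition~\ref{prop:EF0Ft} in the paper). The reduction to smooth resolvent observables with $\eta\gtrsim N^{-1}$, and separately to the level-repulsion quantity $Q_i$ for the gap statistics, is also as the paper does.

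There is, however, a genuine gap in the "matching against $L$" step. The drift term of $L$ at time $s$ is built out of $H_{ij}(s)$, whereas your switching identity, after expanding the admissibility indicator, produces information about $A$, i.e.\ about $H(0)$; these differ by the Gaussian increment, since $H(s)\eqdist\ee^{-s/2}H(0)+(1-\ee^{-s})^{1/2}W$ with $W$ an independent constrained GOE. Your phrase "re-expressing $A$ through $H(s)$" cannot be made literal: $A$ determines $H(0)$, not $H(s)$, and the difference $(1-\ee^{-s})^{1/2}W_{ij}$ in the drift is \emph{not} small relative to the quantities you are comparing (it is of order $\sqrt{s}$, the same size as the leading fluctuations). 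As written, your cancellation would give $\E[LF(H(s))]\approx\frac{1-\ee^{-s}}{16N^3}\sum\E[(\partial_{ij}^{kl})^2F]-\frac{(1-\ee^{-s})^{1/2}}{4N^2}\sum\E[W_{ij}\partial_{ij}^{kl}F]$ rather than something small, and the remaining $W$-drift must be killed by a Gaussian integration by parts (or, equivalently, by the invariance of the constrained GOE under $L$), after which it exactly cancels the leftover fraction of the Laplacian. The paper sidesteps all of this cleanly: it conditions on $W$, decomposes $LF(H(t))=LF_W(H(0))+LF_{H(0)}(W)$, observes $\E[LF_{H(0)}(W)]=0$ by GOE-invariance, and then compares $LF_W$ acting on $H(0)$ with the switching generator $Q$ acting on $A$ (Proposition~\ref{prop:JLapprox}); the switching-invariance $\E[Qf_W(A)]=0$ then kills the leading term. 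Your version can likely be repaired along these lines, but as written the identification of the drift with a multiple of $\sum\E[\ind(\partial_{ij}^{kl})^2F]$, and hence the claimed leading-order cancellation, is not justified without explicitly treating the Gaussian drift.

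A smaller point: the error analysis you gesture at — "fluctuations of the admissibility indicators" and "degenerate four-tuples" — is indeed where most of the work is; in the paper it takes the form of moment estimates on products of adjacency-matrix entries (Lemmas~\ref{lem:Amom}--\ref{lem:condswitchbd}), not edge/path-count estimates from \cite{1503.08702}, and the conditioning-on-$\{A_{ij}=1\}$ step itself uses switchings (Lemma~\ref{lem:condswitchbd}), since one cannot simply drop factors of $A_{ij}$ by independence. You should also be explicit that the a priori bounds on $\Gamma$ and on the eigenvectors, which control the switching derivatives $\|\partial^n F\|_{r,s}$, must be propagated from $t=0$ to $t>0$ (Lemmas~\ref{lem:evcont}--\ref{lem:scont} and Proposition~\ref{prop:Gbdt}); this is not automatic for the constrained DBM.
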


\begin{proposition} \label{prop:univ}
For any fixed $\delta > 0$ and $t \geq N^{-1+\delta}$, the bulk eigenvalue statistics of $H(t)$ and $H(\infty) \eqdist W$ coincide.
\end{proposition}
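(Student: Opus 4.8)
The plan is to deduce Proposition~\ref{prop:univ} from the universality of Dyson Brownian motion with regular deterministic initial data established in \cite{1504.03605}, using the local semicircle law of \cite{1503.08702} to certify that $H(0)$ is an admissible initial condition; no new ingredient beyond these two is required, and the content of the argument lies in matching their hypotheses. Two preliminary identifications make \cite{1504.03605} directly applicable. First, via the orthogonal conjugation $H(t) = R(\hat H(t) \oplus 0)R^*$ used in the proof of Proposition~\ref{prop:cDBM}, the constrained DBM is nothing but the standard $(N-1)$-dimensional Ornstein--Uhlenbeck Dyson Brownian motion; hence the eigenvalues $\lambda_1(t) \geq \cdots \geq \lambda_{N-1}(t)$ of $H(t)\vert_{\f e^\perp}$ solve the usual autonomous Dyson eigenvalue SDEs started from the rescaled nontrivial eigenvalues of $A$, and the process has stationary state $W$. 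Second, since the standard Gaussian measure on $\cal H(\f e^\perp)$ is invariant under orthogonal conjugation, $W\vert_{\f e^\perp}$ is a rotation-invariant Gaussian ensemble of $(N-1)\times(N-1)$ real symmetric matrices, and a one-line computation with \eqref{e:inner_prod} shows it has off-diagonal variance $N^{-1}$; thus $H(\infty) \eqdist W$ is precisely the reference GOE of Section~\ref{sec:intro}, and it suffices to compare the bulk statistics of $H(t)$ with those of the GOE.

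Next I would input the local law. By \cite{1503.08702}, for any $\delta > 0$ there is an event $\Omega$ with $\P(\Omega) = 1 - O(N^{-D})$ for every $D > 0$ on which $H(0)$ obeys the local semicircle law down to the scale $N^{-1+\delta}$; in particular, on $\Omega$ the local eigenvalue density of $H(0)$ is bounded above at every scale $\geq N^{-1+\delta}$ throughout the bulk, which is exactly the regularity hypothesis of \cite{1504.03605}. Applying \cite{1504.03605} conditionally on any realization of $H(0)$ in $\Omega$ shows that, for every $t \geq N^{-1+\delta}$, the bulk gap statistics and the averaged bulk correlation functions of $H(t)$ agree with those of the GOE up to $o(1)$, uniformly in the bulk index and energy, in the precise sense of the definition preceding Proposition~\ref{prop:comp}. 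Integrating over $H(0)$ and using that the test functions in \eqref{e:gap2} and \eqref{e:corr2} are bounded, the contribution of $\Omega^c$ is $O(N^{-D}\norm{\phi}_\infty) = o(1)$, which gives the proposition. Should the statement of \cite{1504.03605} come with an upper cutoff on $t$, the remaining (larger) values of $t$ are even more elementary: there $H(t) \eqdist e^{-t/2} H(0) + \sqrt{1 - e^{-t}}\, G$ with $G$ an independent GOE, exhibiting $H(t)$ as a deformed GOE with a macroscopic Gaussian component, whose bulk universality is classical.

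The only step requiring care — and it is bookkeeping, not a genuine obstacle, the true difficulty of the paper being concentrated in Proposition~\ref{prop:comp} — is the interface between the two cited results: one must check that the estimates furnished by \cite{1503.08702} at the scale $N^{-1+\delta}$ (this threshold, together with the $N^c$ multiplicative errors in that paper, being exactly what forces the lower bound $d \geq N^\alpha$) do supply the regularity condition imposed on the initial data in \cite{1504.03605}, and that conditioning on the random matrix $H(0)$ is legitimate even though \cite{1504.03605} is formulated for deterministic initial data. The residual mismatches — additive Brownian versus Ornstein--Uhlenbeck DBM, and normalization by $N$ rather than $N-1$ in \eqref{e:inner_prod} — reduce to a deterministic time reparametrization and a rescaling of the spectral axis by $1 + O(1/N)$; the former is absorbed into the freedom in $t \geq N^{-1+\delta}$, and the latter perturbs the arguments of $\phi$ in \eqref{e:gap2}--\eqref{e:corr2} by $o(1)$ on the rigidity event, hence does not affect the conclusion.
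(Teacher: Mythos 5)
Your proposal is correct and follows essentially the same route as the paper: the paper's proof simply cites \cite[Section~3]{MR3429490}, and that argument is precisely the one you unpack — apply the fixed-energy universality of \cite{1504.03605} to $H(t)$ for $t\geq N^{-1+\delta}$ with initial-data regularity certified by the local law of \cite{1503.08702}, then push the conclusion through the bookkeeping of conditioning on a random initial condition, the Ornstein--Uhlenbeck versus additive form of DBM, and the $N$ versus $N-1$ normalization. The only emphasis the paper places that you treat rather lightly is that the time-$t$ estimates \eqref{e:Gbdt}--\eqref{e:rig} are what identify the free-convolution spectral locations arising in \cite{1504.03605} with the semicircle quantiles $\gamma_i$ in \eqref{e:gap2}--\eqref{e:corr2}, but this falls squarely within the interface issues you explicitly flag as bookkeeping.
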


Propositions~\ref{prop:comp}--\ref{prop:univ} are proved in Section~\ref{sec:pf}.
As mentioned in Section~\ref{sec:intro}, our main effort and novelty is 
in proving Proposition~\ref{prop:comp}.
Proposition~\ref{prop:univ} is essentially a consequence of general results on universality
of local eigenvalue statistics with small Gaussian component \cite{1504.03605}.
The local semicircle law of \cite{1503.08702} is an important input in the proofs of both propositions.

\begin{proof}[Proof of Theorem~\ref{thm:corr}]
  The proof is immediate from Propositions~\ref{prop:comp}--\ref{prop:univ},
  with $\delta \leq \alpha/4$.
\end{proof}

\section{Switchings and short-time comparision}
\label{sec:switch}

The main result of this section is Proposition~\ref{prop:EF0Ft} below.
To state it, we introduce the following Sobolev-type seminorms,
whereby the derivatives are taken in the directions of all switchings
\begin{equation} \label{e:calXdef}
  \cal X \;\deq\; \hb{\xi_{ij}^{kl} \in \R^{N\times N} :  i,j,k,l \in \qq{1,N}} \,.
\end{equation}
First, for $r \geq 1$, we define an $L^r$-seminorm on $\cal C^0(\cal M)$ through
\begin{equation} \label{e:normt}
  \|F\|_{r,t} \;\deq\; \pb{ \E |F(H(t))|^r }^{1/r} \,.
\end{equation}
Then, we extend this seminorm to include derivatives: for $F \in \cal C^n(\cal M)$ we define
\begin{equation} \label{e:normpart}
  \|\partial^n F\|_{r,t}
  \;\deq\;
  \normbb{\sup_{\theta \in [0,1]^n} \sup_{X \in \cal X^n}
    \absb{\partial_{X_1} \cdots \partial_{X_n} F\p{\,\cdot\,+(d-1)^{-1/2} \, \theta \cdot X)}}}_{r,t}\,,
\end{equation}
where $\partial_Y$ denotes the directional derivative in the direction $Y$, and for $\theta \in [0,1]^n$ and $X \in \cal X^n$ we abbreviate
\begin{equation*}
  \theta \cdot X\;\deq\; \theta_1 X_1 + \cdots + \theta_n X_n \,.
\end{equation*}

\begin{proposition} \label{prop:EF0Ft}
Let $H(t)$ be the constrained Dyson Brownian motion from Definition~\ref{def:constr_DBM}
with initial condition \eqref{e:Hdef}. Fix $\varepsilon > 0$ and let $r \equiv r(\epsilon)$ be large enough depending on $\epsilon$.
Then for any $F \in \cal C^4(\cal M)$ we have
\begin{equation} \label{e:EF0Ft}
  \E F(H(t)) - \E F(H(0))
  \;=\; O\pBB{D^{-1/2}N^{1+\varepsilon} \max_{1\leq i \leq 4}  \int_0^t \norm{\partial^i F}_{r,s} \, \dd s }
  \,.
\end{equation}
\end{proposition}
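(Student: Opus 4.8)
The plan is to estimate the time derivative $\frac{\dd}{\dd t}\E F(H(t)) = \E[LF(H(t))]$ using the explicit form of the generator $L$ from Proposition~\ref{prop:cDBM}, and then integrate in time. Writing out \eqref{e:L-cDBM}, we have
\begin{equation*}
  \E[LF(H)] \;=\; \frac{1}{16N^3}\sum_{i,j,k,l}\E\qb{(\partial_{ij}^{kl})^2 F(H)} \;-\; \frac{1}{32N^2}\sum_{i,j,k,l}\E\qb{H_{ij}^{kl}\,\partial_{ij}^{kl}F(H)}\,.
\end{equation*}
The second-derivative term is of order $N^4 \cdot N^{-3} \cdot \|\partial^2 F\|_{r,t} = N\|\partial^2 F\|_{r,t}$ by a crude sum over the $O(N^4)$ quadruples, which is already of the right size (and in fact we would want to save the $D^{-1/2}$ factor there too, but since the bound \eqref{e:EF0Ft} only asks for $N^{1+\varepsilon}$, a careful-but-not-optimal estimate suffices for this piece — see below). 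The real work is the drift term, which naively is of order $N^4 \cdot N^{-2} \cdot \|H_{ij}^{kl}\|\cdot\|\partial F\| \sim N^2 \|H\|_\infty \|\partial F\|$, far too big. The point is that the random regular graph's initial data $H(0)$ makes $\sum_{i,j,k,l} H_{ij}^{kl}\partial_{ij}^{kl}F$ exhibit massive cancellation, and this cancellation is exactly what is controlled by the switching structure.

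The key mechanism is a \textbf{switching invariance / integration-by-parts identity} for the random regular graph. The uniform measure on $d$-regular graphs is invariant under the switching operation $A \mapsto A + \xi_{ij}^{kl}$ whenever the four vertices are distinct and the edge-configuration is "switchable" (edges $\{i,k\},\{j,l\}$ present, edges $\{i,j\},\{k,l\}$ absent). Averaging a test function over all such admissible switchings, and comparing $\E[F(A)]$ with $\E[F(A + \xi_{ij}^{kl})]$, produces — after Taylor expanding $F$ around $H(0)$ to second order, which is why $\cal C^4$ and the shifted seminorm \eqref{e:normpart} with the $\theta\cdot X$ displacement appear — an identity of the schematic form
\begin{equation*}
  \frac{1}{N^2}\E\qB{\sum_{i,j,k,l} H_{ij}^{kl}\,\partial_{ij}^{kl}F(H(0))} \;=\; O\pb{D^{-1/2}N^{1+\varepsilon}\,\textstyle\max_{i\le 4}\|\partial^i F\|_{r,0}}\,,
\end{equation*}
where the gain of $D^{-1/2}$ (rather than just a gain of one power of $N$) comes from the concentration estimates for the number of switchings and for local Green's-function-type quantities established in \cite{1503.08702}; this is where $D = d \wedge N^2/d^3$ enters. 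One must handle the non-switchable quadruples (those where the edge configuration is wrong, or where two of $i,j,k,l$ coincide) as lower-order error terms: there are only $O(N^3 d)$ or fewer of these relative to the full $O(N^4)$, and they are bounded directly using $\|H\|_\infty \lesssim N^\varepsilon$ (delocalization/boundedness of entries of $(d-1)^{-1/2}A$) together with the seminorm of $\partial F$.

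To upgrade from $t=0$ to general $t$ I would apply the same switching identity along the flow: the constrained DBM can be written as $H(s) = \ee^{-s/2}H(0) + (\text{Gaussian part})$, and the Gaussian part is independent of $H(0)$, so conditioning on it and applying the switching identity to the $H(0)$-dependence yields
\begin{equation*}
  \absbb{\frac{\dd}{\dd s}\E F(H(s))} \;=\; O\pb{D^{-1/2}N^{1+\varepsilon}\,\textstyle\max_{i\le4}\|\partial^i F\|_{r,s}}\,,
\end{equation*}
uniformly in $s \in [0,t]$; integrating from $0$ to $t$ gives \eqref{e:EF0Ft}. The choice of a large exponent $r = r(\varepsilon)$ is forced by Hölder: the concentration bounds from \cite{1503.08702} hold with overwhelming probability but have polynomially-large exceptional contributions, so to absorb those into an $N^\varepsilon$ one pays by raising everything to a high power and taking $r$-th moments, which is precisely what the $L^r$-seminorm $\|\cdot\|_{r,s}$ is designed to accommodate. \textbf{The main obstacle} is establishing the switching identity with the sharp $D^{-1/2}$ gain: it requires simultaneously (a) a combinatorial count of admissible switchings and its concentration, (b) controlling the second-order Taylor remainder uniformly over the switching displacements — hence the shifted seminorm — and (c) matching these against the a priori bounds on entries and resolvents from the local semicircle law, keeping careful track of the dependence structure of the matrix entries rather than pretending they are independent. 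Everything else (the diffusion term, the time-integration, the reduction to $t=0$) is comparatively routine.
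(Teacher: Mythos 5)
Your overall plan matches the paper's: reduce to estimating $\E[LF(H(t))]$, peel off the Gaussian component (against which the Ornstein--Uhlenbeck generator has vanishing expectation), and exploit the invariance of the uniform $d$-regular measure under edge switchings to cancel the leading contribution. The paper realizes this cancellation by introducing a switching jump generator $Q$ on regular graphs, proving $\E[Qf(A)]=0$, and showing $Qf(A)=LF(H)+R$ with $R$ controlled by the shifted seminorms. Your sketch of the switching identity, the role of the Taylor remainder, the reason for requiring $F\in\cal C^4$, and the high-$r$ H\"older argument are all essentially correct in spirit.

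There is, however, a concrete error that would break the proof as written. You assert that the diffusion term $\frac{1}{16N^3}\sum_{i,j,k,l}\E[(\partial_{ij}^{kl})^2F(H)]$ is bounded crudely by $N\|\partial^2F\|_{r,t}$ and that this is ``already of the right size,'' on the grounds that ``the bound \eqref{e:EF0Ft} only asks for $N^{1+\varepsilon}$.'' But the target prefactor in \eqref{e:EF0Ft} is $D^{-1/2}N^{1+\varepsilon}$, and since $D\geq N^{\alpha}$ with $\varepsilon$ arbitrarily small, this target is strictly smaller than $N$. The crude $O(N)$ bound on the diffusion term therefore fails uniformly, and in the regime $D\sim N^{2/3}$ it is too large by a factor of about $N^{1/3}$. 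The diffusion term must be subjected to exactly the same switching cancellation as the drift term: in the paper, the second-order Taylor term of $Qf(A)$ (after replacing both factors of $A_{ij}$ by their means $d/N$) produces the diffusion part of $LF(H)$ up to the controlled errors $R_3,R_4,R_6$. There is no route to \eqref{e:EF0Ft} that treats the diffusion piece naively. A secondary inaccuracy: the $D^{-1/2}$ gain in this proposition does not come from local-semicircle or Green's-function concentration estimates of \cite{1503.08702}; it comes from elementary moment bounds on entries of the adjacency matrix (consequences of $d$-regularity and permutation invariance) combined with the bookkeeping of $\sqrt{d-1}$ factors in passing between $f(A)$ and $F(H)$. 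The local law enters only later, when one checks that the specific test functions used satisfy $\|\partial^i F\|_{r,s}\leq N^{c}$.
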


In the applications in Section~\ref{sec:pf}, we will use
functions $F$ satisfying $\|\partial^i F\|_{r,s} \leq N^c$ for $i \leq 4$ and a constant $c > 0$ that can be chosen arbitrarily small.
Thus, for $t\leq N^{-1 - \delta}D^{1/2}$ the right-hand side of \eqref{e:EF0Ft} 
will be $O(N^{-\delta + \epsilon + c})$ which is $o(1)$ provided that $c+\epsilon < \delta$.

The starting point for the proof of Proposition \ref{prop:EF0Ft} is the idea of \cite[Lemma~A.1]{BY2016},
namely to estimate the left-hand side of \eqref{e:EF0Ft} 
by estimating $\E(LF(H(t)))$.
However, since the entries of $H(t)$ are not independent, a different approach from \cite{BY2016} is needed
to control $\E (LF(H(t)))$. We do this by approximating the constrained DBM by a Markovian jump process induced by switchings.
This process is defined as follows.

\subsection{Switching dynamics}

We introduce a Markovian jump process on simple regular graphs by defining its generator
\begin{equation}
  Q f(A) \;\deq\; \frac{1}{8Nd} \sum_{i,j,m,n} I_{ij}^{mn}(A) \pB{ f(A-\xi_{ij}^{mn})-f(A) } \,,
\end{equation}
where we recall the definition of a switching from \eqref{e:Xijkl-def} and introduce the indicator function
\begin{equation} \label{e:def_Iijmn}
  I_{ij}^{mn}(A) \;\deq\; A_{ij}A_{mn}(1-A_{im})(1-A_{in})(1-A_{jm})(1-A_{jn})\,.
\end{equation}
The indicator function $ I_{ij}^{mn}(A)$ ensures that the graph encoded by $A$ contains the edges $\{i,j\}$ and $\{m,n\}$ but no other edges
between the four vertices $\{i,j,m,n\}$ (i.e.\ its restriction to $\{i,j,m,n\}$ is $1$-regular).

Thus, the process generated by $Q$ is a Markovian jump process whose
jump times are the events of a Poisson clock with a constant rate; at
each event of the clock, four vertices are selected uniformly at
random, and a switching as in Figure~\ref{fig:switch1} is performed on
the graph if the four vertices are connected by exactly two edges. It is
not hard to show that the uniform measure on $d$-regular graphs is
invariant under this jump process.

\begin{proposition} \label{prop:Jinv}
The uniform measure on simple $d$-regular graphs is invariant under $Q$.
This means that for any function $f$ on the set of simple $d$-regular graphs we have $\E(Qf(A)) = 0$.
\end{proposition}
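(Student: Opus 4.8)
The plan is to show that the uniform measure $\mu$ on simple $d$-regular graphs is \emph{reversible} with respect to the jump process generated by $Q$, which immediately implies invariance. The key observation is that the switching operation is involutive on the relevant configurations: if $A$ contains exactly the edges $\{i,j\}$ and $\{m,n\}$ among the four vertices $\{i,j,m,n\}$ (so $I_{ij}^{mn}(A)=1$), then $A' \deq A - \xi_{ij}^{mn}$ is again a simple $d$-regular graph, now containing exactly the edges $\{i,m\}$ and $\{j,n\}$ among those four vertices; hence $I_{im}^{jn}(A') = 1$ and $A' - \xi_{im}^{jn} = A$. So the switchings pair up configurations, and the transition $A \to A'$ via the index tuple $(i,j,m,n)$ is matched by the reverse transition $A' \to A$ via the tuple $(i,m,j,n)$.

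First I would make precise the combinatorial claim above: one checks that adding $\xi_{ij}^{kl}$ to an adjacency matrix removes the edges $\{i,k\},\{j,l\}$ and adds $\{i,j\},\{k,l\}$ (as already noted in the text after \eqref{e:Xijkl-def}), that this preserves the degree sequence, and that under the indicator $I_{ij}^{mn}(A) = 1$ the result has $0/1$ entries and no loops. Then I would verify the detailed balance relation: since $\mu$ is uniform, $\mu(A) = \mu(A')$ for any two simple $d$-regular graphs $A, A'$, so writing $q(A,A')$ for the total rate of jumping from $A$ to $A'$, it suffices to show $q(A,A') = q(A',A)$. The rate $q(A,A')$ is $\frac{1}{8Nd}$ times the number of tuples $(i,j,m,n)$ with $I_{ij}^{mn}(A)=1$ and $A - \xi_{ij}^{mn} = A'$; by the involution above, each such tuple $(i,j,m,n)$ corresponds bijectively to the tuple $(i,m,j,n)$ witnessing a jump $A' \to A$ at the same rate, giving $q(A,A') = q(A',A)$.

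Granting detailed balance, invariance follows by the standard computation
\begin{equation*}
  \sum_A \mu(A)\, Qf(A) \;=\; \sum_{A, A'} \mu(A)\, q(A,A') \pb{f(A') - f(A)} \;=\; \sum_{A,A'} \mu(A')\, q(A',A) \, f(A') - \sum_{A,A'} \mu(A)\, q(A,A')\, f(A) \;=\; 0\,,
\end{equation*}
where in the middle step I relabeled $(A,A') \leftrightarrow (A',A)$ in the first sum and used $\mu(A)q(A,A') = \mu(A')q(A',A)$. This is exactly the statement $\E(Qf(A)) = 0$.

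The only real subtlety — the ``main obstacle,'' though it is a mild one — is bookkeeping the index tuples carefully, because the sum defining $Q$ ranges over \emph{ordered} tuples $(i,j,m,n)$ and a single switching of an edge pair can be indexed by several such tuples (swapping $i \leftrightarrow j$, swapping $m \leftrightarrow n$, and swapping the pair $\{i,j\} \leftrightarrow \{m,n\}$, subject to the constraint that the switching removes $\{i,m\},\{j,n\}$ rather than $\{i,n\},\{j,m\}$). One must check that the involution $(i,j,m,n) \mapsto (i,m,j,n)$ is a genuine bijection between the tuples contributing to $q(A,A')$ and those contributing to $q(A',A)$ — in particular that it respects the multiplicity — which it does, since it is an involution on all of $\qq{1,N}^4$ carrying $\{I_{ij}^{mn}(A)=1, A-\xi_{ij}^{mn}=A'\}$ onto $\{I_{im}^{jn}(A')=1, A'-\xi_{im}^{jn}=A\}$. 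Since this involution is independent of $A$, no counting of multiplicities is actually needed beyond observing it is a bijection. Everything else is routine.
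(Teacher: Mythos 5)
Your proof is correct, and it reaches the conclusion by a slightly different route from the paper. You establish the stronger property of \emph{reversibility} (detailed balance) of the uniform measure under $Q$, working directly with the transition rates $q(A,A')$, using only uniformity $\mu(A)=\mu(A')$ and a bijection between the tuples witnessing $A\to A'$ and those witnessing $A'\to A$. The paper instead proves invariance by a direct change of variables inside the sum: it writes $I_{ij}^{mn}(A)=A_{ij}A_{mn}I(\{ij,mn\};A)$, applies the distributional identity $A\eqdist T_S(A)$ (Lemma~\ref{lem:switchinv}) to move the argument of $f$, and then relabels the summation indices via $(i,j,m,n)\mapsto(i,m,j,n)$ to restore the original form. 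Both arguments hinge on exactly the same two combinatorial observations --- that when $I_{ij}^{mn}(A)=1$ the map $A\mapsto A-\xi_{ij}^{mn}$ produces a simple $d$-regular graph with $I_{im}^{jn}=1$ and is inverted by the corresponding switching, and that the tuple $(i,m,j,n)$ encodes the reverse move --- and the involution on $\qq{1,N}^4$ you describe is precisely the index relabeling used in the paper's third step. Your framing has the mild advantage of bypassing Lemma~\ref{lem:switchinv} and of recording the extra information that $Q$ is reversible, not merely measure-preserving; the paper's version is somewhat more compact, and Lemma~\ref{lem:switchinv} is needed elsewhere (in the proof of Proposition~\ref{prop:JLapprox}) anyway.
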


The proof of the proposition is given in Section~\ref{sec:switch2}, in a slightly more general context.
The following proposition shows that the switching jump process generated by $Q$ is well approximated by the constrained DBM generated by $L$.

The generator $L$ acts naturally on functions of $H$ (denoted henceforth by an uppercase $F$), and the generator $Q$ on functions of $A$ (denoted henceforth by a lowercase $f$).
It is therefore convenient to introduce, for any $F \in \cal C^n(\cal M)$, the abbreviations
\begin{equation} \label{e:f_F_conv}
H \;=\; H_A \;\deq\; \frac{1}{\sqrt{d - 1}} (A - d \f e \f e^*) \,, \qquad f(A) \;=\; f_F(A) \;\deq\; F(H_A)\,.
\end{equation}

\begin{proposition} \label{prop:JLapprox}
For any $F \in \cal C^4 (\cal M)$ and using the notation \eqref{e:f_F_conv} we have 
\begin{equation} \label{e:JLapprox}
  Q f(A)
  \;=\; LF(H) + R \,,
\end{equation}
where
\begin{equation}
  \E R
  \;=\;
  O(D^{-1/2}N^{1+\varepsilon}) \max_{1 \leq i \leq 4} \norm{\partial^i F}_{r,0}
  \,.
\end{equation}
Here $\E$ denotes expectation with respect to the uniform measure on random $d$-regular graphs $A$.
\end{proposition}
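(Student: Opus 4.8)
\textbf{Proof plan for Proposition~\ref{prop:JLapprox}.}
The plan is to compute $Qf(A)$ explicitly in terms of the switchings $\xi_{ij}^{mn}$ by Taylor expanding $f(A-\xi_{ij}^{mn}) = F(H_A - (d-1)^{-1/2}\xi_{ij}^{mn})$ to fourth order, and then to match the resulting second-order term against $LF(H)$, absorbing everything else into the remainder $R$. Writing $\xi \equiv \xi_{ij}^{mn}$ and recalling $\partial_{ij}^{mn} = \partial_\xi = \tr(\xi\partial)$, Taylor's theorem with integral remainder gives
\begin{equation*}
  f(A-\xi) - f(A)
  \;=\; -(d-1)^{-1/2} \partial_{ij}^{mn} F(H)
  + \frac{1}{2}(d-1)^{-1}(\partial_{ij}^{mn})^2 F(H)
  + E_{ij}^{mn}\,,
\end{equation*}
where $E_{ij}^{mn}$ collects the third- and fourth-order terms (the third order with a fixed coefficient, the fourth with the integral remainder evaluated along the segment $H - (d-1)^{-1/2}\theta\,\xi$, $\theta\in[0,1]$). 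Multiplying by $\frac{1}{8Nd}I_{ij}^{mn}(A)$ and summing over $i,j,m,n$, one identifies the three pieces:
\begin{equation*}
  Qf(A) \;=\; -\frac{(d-1)^{-1/2}}{8Nd}\sum_{i,j,m,n} I_{ij}^{mn}(A)\,\partial_{ij}^{mn}F(H)
  \;+\; \frac{(d-1)^{-1}}{16Nd}\sum_{i,j,m,n} I_{ij}^{mn}(A)\,(\partial_{ij}^{mn})^2 F(H)
  \;+\; \frac{1}{8Nd}\sum_{i,j,m,n} I_{ij}^{mn}(A)\,E_{ij}^{mn}\,.
\end{equation*}

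The heart of the matter is then to compare the first two sums with the two terms of $L$ in \eqref{e:L-cDBM}. For the second-order (diffusion) term this is a counting identity: one must show $\frac{(d-1)^{-1}}{16Nd}\sum I_{ij}^{mn}(A)(\partial_{ij}^{mn})^2 F$ equals $\frac{1}{16N^3}\sum (\partial_{ij}^{mn})^2 F$ up to an error of the claimed size. The point is that $\sum_{m,n} I_{ij}^{mn}(A) = \sum_{m,n} A_{ij}A_{mn}\prod(1-A)$ counts, for each edge $\{i,j\}$, the edges $\{m,n\}$ disjoint in the above sense from $\{i,j\}$; since $A$ is $d$-regular this count is $Nd(1+O(d^2/N + d/N\cdot\ldots))$ — more precisely one writes $I_{ij}^{mn}(A) = A_{ij}A_{mn} - (\text{lower-order terms involving extra $A$-factors})$, uses $\sum_{m,n}A_{mn} = Nd$ and $\sum_n A_{in}=d$, and the effective error in replacing $\frac{1}{Nd}I_{ij}^{mn}$ by $\frac{1}{N^2}A_{ij}$ (after summing) is governed by $d^2/N^2$, i.e.\ by $D^{-1}$ after using $d\le N^{2/3}$. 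One also needs the analogous computation relating $\frac{1}{16N^3}\sum A_{ij}A_{mn}(\partial_{ij}^{mn})^2 F$ to $\frac{1}{16N^3}\sum (\partial_{ij}^{mn})^2 F$: here the missing terms involve one or two extra $A$-factors and, after summation against the derivative and using $\sum_j A_{ij} = d$, these contribute again at relative order $d/N$ or $d^2/N^2$, so at worst $D^{-1/2}N^{1+\varepsilon}$ once the $\|\partial^2 F\|_{r,0}$ factor and Hölder's inequality (to handle the $A$-dependent coefficients, which are bounded, against $F$) are accounted for. For the first-order (drift) term one similarly replaces $\frac{1}{8Nd}\sum I_{ij}^{mn}(A)\partial_{ij}^{mn}F$ by $\frac{1}{32N^2}\sum H_{ij}^{mn}\partial_{ij}^{mn}F$; the key algebraic input is that $H_{ij}^{mn} = 2(H_{ij}+H_{mn}-H_{im}-H_{jn})$ and $(d-1)^{1/2}H_{ij} = A_{ij}-\frac{d}{N}$, so on the event $I_{ij}^{mn}(A)=1$ one has $A_{ij}=A_{mn}=1$, $A_{im}=A_{jn}=0$, and $(d-1)^{-1/2}$ times the combinatorial factor exactly reproduces $H_{ij}^{mn}$ up to the $\frac dN$ shifts, which sum to zero by $\sum_j H_{ij}=0$; the mismatch between summing over switching-admissible quadruples (weight $I_{ij}^{mn}$) and over all quadruples (weight $\frac{d}{N}$-corrected) is again a $D^{-1/2}$-order counting error.

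Finally, the remainder sum $\frac{1}{8Nd}\sum I_{ij}^{mn}(A)E_{ij}^{mn}$ must be shown to be $O(D^{-1/2}N^{1+\varepsilon})\max_{i\le 4}\|\partial^i F\|_{r,0}$. Since $E_{ij}^{mn}$ is a combination of $(d-1)^{-3/2}(\partial_{ij}^{mn})^3 F$ and an integral of $(d-1)^{-2}(\partial_{ij}^{mn})^4 F$ along the segment $H - (d-1)^{-1/2}\theta\xi$, and the seminorm $\|\partial^n F\|_{r,0}$ in \eqref{e:normpart} is precisely designed to dominate such shifted higher derivatives (the supremum over $\theta\in[0,1]$ and $X\in\cal X^n$ is built in), each summand is bounded in $\|\cdot\|_{r,0}$ by $(d-1)^{-3/2}\|\partial^3 F\|_{r,0}$ or $(d-1)^{-2}\|\partial^4 F\|_{r,0}$; summing $N^4$ of them against $\frac{1}{Nd}$ gives $N^3 d^{-1}\cdot d^{-3/2} = N^3 d^{-5/2}$, which is $\le N^{1}\cdot d^{-1}\cdot N \le D^{-1/2}N^{1+\varepsilon}$ in the regime $d\in[N^\alpha,N^{2/3-\alpha}]$ (using $D = d\wedge N^2/d^3$, so $D^{-1/2} \ge d^{-1/2}$ and $D^{-1/2}\ge d^{3/2}/N$ — a short case check on whether $d\le N^{1/2}$ or not closes the bound). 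Taking expectations and using $\E|\partial_X\cdots F(H)|\le\|\partial^n F\|_{r,0}$ by Jensen turns the pathwise bounds into the stated estimate on $\E R$.

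\textbf{Main obstacle.} I expect the genuinely delicate part to be the drift-term comparison and the associated counting: controlling the difference between the switching-weighted sum $\sum I_{ij}^{mn}(A)\,(\cdot)$ and the smooth sum $\sum H_{ij}^{mn}\,(\cdot)$ uniformly, keeping track of all the lower-order $A$-monomials that appear when one expands the product $\prod(1-A_{\bullet})$ in $I_{ij}^{mn}$, and verifying that every such monomial, after partial summation using $d$-regularity, produces an error no larger than $D^{-1/2}N^{1+\varepsilon}$ — this is where the precise balance $D = d\wedge N^2/d^3$ is forced, and where one must be careful that the coefficients depend on $A$ (hence on $H$) and so an application of Hölder against the derivative seminorm (with $r$ large) is needed rather than a crude pointwise bound.
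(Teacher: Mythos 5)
Your overall plan — Taylor expand $f(A-\xi_{ij}^{mn})-f(A)$, write $I_{ij}^{mn}=A_{ij}A_{mn}-(\text{corrections})$, expand $A_{ij}=\tfrac dN+(A_{ij}-\tfrac dN)$, match the second-order term to the diffusion part of $L$ and the cross terms to the drift part, and control remainders via the seminorms \eqref{e:normpart} and H\"older — is the same skeleton as the paper's proof. But there is a genuine gap at the step you yourself flag as delicate, and it cannot be closed by H\"older plus counting.

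The error terms one obtains after the expansion $A_{ij}A_{kl}=\tfrac{d^2}{N^2}+\tfrac dN(A_{ij}-\tfrac dN)+\tfrac dN(A_{kl}-\tfrac dN)+(A_{ij}-\tfrac dN)(A_{kl}-\tfrac dN)$ have the form
\begin{equation*}
\frac{1}{N^2}\sum_{i,j,k,l}\E\qB{g(A)\pB{A_{ij}-\frac dN}}\,,\qquad
\frac{1}{N^2 d}\sum_{i,j,k,l}\E\qB{g(A)\pB{A_{ij}-\frac dN}\pB{A_{kl}-\frac dN}}\,,
\end{equation*}
with $g=(\partial_{ij}^{kl})^2 f$ or $\partial_{ij}^{kl}f$. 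If you estimate, say, the first of these by H\"older against $\|g\|_r$, you pay $\E\qb{|A_{ij}-\tfrac dN|^{r'}}^{1/r'}\approx (d/N)^{1-1/r}$ per factor; summing over $N^4$ quadruples and dividing by $N^2$ leaves an error of order $N\,\|\partial^2 F\|_{r,0}$ in the notation of \eqref{e:JLapprox}, which overshoots the claimed bound $D^{-1/2}N^{1+\varepsilon}\max_i\|\partial^i F\|_{r,0}$ by a factor of $D^{1/2}\geq N^{\alpha/2}$. The trouble is that H\"older destroys the cancellation in $A_{ij}-\tfrac dN$, which has mean close to $0$; keeping that cancellation is exactly what makes the comparison work. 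The missing ingredient is a \emph{switching integration-by-parts} (the paper's Lemma~\ref{lem:condswitchbd}): one first uses the hard regularity constraints $\sum_{m,n}A_{mn}=Nd$, $\sum_m A_{im}=d$ to write $A_{ij}-\tfrac dN=\tfrac{1}{Nd}\sum_{m,n}(A_{ij}A_{mn}-A_{im}A_{jn})$, then inserts the switchability indicator $I_1$ (the discrepancy being a lower-order error handled by the counting lemmas), and finally uses the switching invariance $A\eqdist T_S(A)$ from Lemma~\ref{lem:switchinv} to convert $\E\qb{g(A)(A_{ij}A_{mn}-A_{im}A_{jn})I_1}$ into $\E\qb{(g(A)-g(A-\xi_{ij}^{mn}))A_{ij}A_{mn}I_1}$, i.e.\ into a derivative of $g$ weighted by $A_{ij}A_{mn}$. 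This trades one $(A_{ij}-\tfrac dN)$ factor for one additional derivative of $f$ together with a genuine factor $(d/N)^{1-\varepsilon}$, which is what makes the arithmetic close. Without this step, your "counting $+$ H\"older" argument for the drift and second-order cross terms does not produce a bound of the right order.

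A smaller remark: in your estimate of the Taylor remainder you sum $N^4$ quadruples against $\tfrac{1}{Nd}$ to get $N^3 d^{-5/2}$, which is \emph{not} $\leq D^{-1/2}N^{1+\varepsilon}$ in the full range $d\in[N^\alpha,N^{2/3-\alpha}]$. The correct accounting keeps the weight $A_{ij}A_{mn}$ (or $I_{ij}^{mn}$) in the sum, reducing the effective number of terms to $(Nd)^2$, and then H\"older against the seminorm with a power of $(d/N)$ does give $Nd^{-1/2}N^{O(\varepsilon)}\|\partial^3 F\|_{r,0}$, which fits. Also note that the paper expands only to third Taylor order and obtains the fourth derivative not from the expansion but from applying the switching integration-by-parts to $(\partial_{ij}^{kl})^2 f$; your fourth-order expansion is harmless but not what forces $F\in\cal C^4$.
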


The proof of this proposition is also deferred to Section~\ref{sec:switch2} below.
Roughly, the idea of the proof is as follows. By Taylor expansion, we obtain
\begin{equation} \label{e:Jtay-sketch}
  Q f(A) \;\approx\; \frac{1}{8Nd} \sum_{i,j,m,n} A_{ij}A_{mn} \pbb{ - \partial_{ij}^{mn} f(A) + \frac{1}{2} (\partial_{ij}^{mn})^2f(A) }
\end{equation}
with high probability.
Now $\E A_{ij} = \frac{d}{N}$ if $i \neq j$. By expanding $A_{ij}A_{mn} = (\frac{d}{N}+(A_{ij}-\frac{d}{N}))(\frac{d}{N}+(A_{mn}-\frac{d}{N}))$,
and keeping only the leading terms, we find that the right-hand side of \eqref{e:Jtay-sketch} becomes
by $LF(H$).
Here, for the second-order term on the right-hand side of \eqref{e:Jtay-sketch},
the leading term from $A_{ij}A_{mn}$ is $\frac{d^2}{N^2}$; for the first-order
term on the right-hand side of \eqref{e:Jtay-sketch},
the leading term from $A_{ij}A_{mn}$ is $\frac{d}{N} (A_{ij}-\frac{d}{N}) + \frac{d}{N} (A_{mn}-\frac{d}{N})$. 
Further error terms result from the dependence of the entries of the adjacency matrix.

Before giving the proofs of Propositions~\ref{prop:Jinv}--\ref{prop:JLapprox},
we deduce Proposition~\ref{prop:EF0Ft} from them.

\begin{proof}[Proof of Proposition~\ref{prop:EF0Ft}]
By \eqref{e:L_is_generator}, it suffices to estimate $\E [LF(H(t))]$.
By explicit solution of the constrained DBM, $H(t)$,
we find for any fixed $t \geq 0$ that
\begin{equation} \label{e:Ht_dist}
H(t) \;\eqdist\; \ee^{-t/2} H(0) + (1-\ee^{-t})^{1/2} W
\end{equation}
where $W$ is a copy of the constrained GOE independent of $H(0)$.
For the remainder of the proof,
we identify the right-hand side with $H(t)$, abbreviate $H \equiv H(0)$, and introduce the two functions
\begin{equation*}
F_W(H) \;=\; F_H(W) \;\deq\; F\pb{\ee^{-t/2} H + (1-\ee^{-t})^{1/2} W}\,,
\end{equation*}
where the choice of the argument determines the variables on which the generator $L$ acts. We recall the generator $L$ from \eqref{e:L-cDBM}, 
\begin{equation*}
L \;=\; \frac{1}{16N^3} \sum_{i,j,k,l} (\partial_{ij}^{kl})^2 - \frac{1}{32 N^2} \sum_{i,j,k,l} H_{ij}^{kl} \partial_{ij}^{kl}
\,.
\end{equation*}
From $\partial^2 = (\ee^{-t}+(1-\ee^{-t}))\partial^2$, 
$\ee^{-t/2} \partial F = \partial F_W$, and $(1 - \ee^{-t})^{1/2} \partial F = \partial F_H$,
we then deduce that $L F\pb{\ee^{-t/2} H + (1-\ee^{-t})^{1/2} W} = L F_W(H) + L F_H(W)$. We therefore get
\begin{equation*}
\E [LF(H(t))] \;=\; \E [L F_W(H)] + \E [L F_H(W)] \;=\; \E [L F_W(H)]\,,
\end{equation*}
where in the second step we used that the constrained GOE, $W$, is invariant with respect to the generator $L$.

Next, we define $f_W(A) \deq F_W(H)$ where $H \equiv H_A$ is defined as \eqref{e:f_F_conv}.
By Proposition~\ref{prop:Jinv}, the random $d$-regular graph $A$ is invariant with respect to the generator $Q$,
and Proposition~\ref{prop:JLapprox} therefore yields
\begin{equation*}
\E [L F_W(H)]
\;=\; \E[Q f_W(A)] +  O(D^{-1/2}N^{1+\varepsilon}) \max_{1 \leq i \leq 4} \norm{\partial^i F_W}_{r,0}
\;=\; O(D^{-1/2}N^{1+\varepsilon}) \max_{1 \leq i \leq 4} \norm{\partial^i F}_{r,t}
\,.
\end{equation*}
Thus, with \eqref{e:L_is_generator}, we have shown that
\begin{equation*}
\frac{\dd}{\dd t} \E[F(H(t))]
\;=\;
O(D^{-1/2}N^{1+\varepsilon}) \max_{1 \leq i \leq 4} \norm{\partial^i F}_{r,t}
\,,
\end{equation*}
and the claim follows by integrating over $t$.
\end{proof}

\subsection{Proofs of Propositions~\ref{prop:Jinv}--\ref{prop:JLapprox}}
\label{sec:switch2}

Propositions~\ref{prop:Jinv}--\ref{prop:JLapprox} concern switchings of regular graphs.
Switchings played an important role in the proof of the local semicircle law for
random regular graphs \cite{1503.08702}.
Here we use simple switchings instead of the double switchings needed in \cite{1503.08702}.

\begin{figure}[t]
\begin{center}
\input{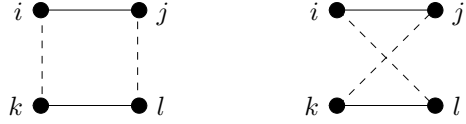}
\end{center}
\caption{Given four vertices $i,j,k,l$ with two edges between them,
there are two possible switchings. By equipping the edges with directions,
one of these two switchings can be selected canonically.\label{fig:switch}}
\end{figure}

Given two disjoint edges of a regular graph such that the graph has no other edges between
the vertices incident to these two edges, there are two possible switchings; see Figure~\ref{fig:switch}.
To specify one of these two switchings, it is convenient to assign to each of the edges to be switched a direction;
there is then a canonical choice between the two possible switchings. We write $ij$ for the edge $\{i,j\}$ directed from $i$ to $j$.

We consider sets $S$ of two directed edges of the complete graph, which we write in the form $S = \{ij, kl\}$.
We denote by $[S] = \{i,j,k,l\}$ the set of vertices incident to the edges of $S$.
For two such sets $S$ and $S'$, we define the indicator functions
\begin{align}
  \label{e:Idef}
  I(S) \;\equiv\; I(S;A) &\;\deq\; \ind{|[S]|=4 \text{ and } E|_{[S]} \text{ is $1$-regular}} \,,
  \\
  \label{e:Jdef}
  J(S,S') \;\equiv\; J(S,S';A) &\;\deq\; \ind{[S] \cap [S'] = \emptyset} \,,
\end{align}
where $E \equiv E(A) \deq \h{\h{i,j}: A_{ij}=1}$ is the set of (undirected) edges of the graph encoded by $A$,
and $E|_B \deq \h{e \in E: e \subset B}$ is the restriction of the graph $E$ to the subset of vertices $B$.
The indicator functions are illustrated in Figure~\ref{fig:simpswitchIJ}.
Note that $I_{ij}^{mn} = A_{ij}A_{mn}I(\{ij,mn\})$ (recall \eqref{e:def_Iijmn}).

\begin{remark}
The definitions \eqref{e:Idef}--\eqref{e:Jdef}
are similar to those given in
\cite[Section~6]{1503.08702}, with the following differences. First,
the current set $S$ consists of two directed edges instead of the
three undirected edges in \cite{1503.08702}. Because of the directions contained in the current set $S$,
it effectively incorporates the extra parameter $s$ of
\cite[Section~6]{1503.08702}. Second, the edges in $S$ are edges of
the complete graph, and we do not assume that they are contained in
some regular graph $A$; we will ultimately define the switching
associated with the set $S$ to act trivially unless $S$ is contained
in the edges $E$ of the given graph.
\end{remark}

For a set $S = \{ij,kl\}$ of two directed edges, we define the \emph{switching}
\begin{equation} \label{e:def_switching}
  T_{S}(A)
  \;\deq\; \begin{cases}
    A-\xi_{ij}^{kl} &  \text{if } I(S)=1, A_{ij}=1, A_{kl}=1\\
    A+\xi_{ij}^{kl} & \text{if } I(S)=1, A_{ik}=1, A_{jl}=1\\
    A & \text{otherwise}\,,
  \end{cases}
\end{equation}
where we recall the definition of $\xi_{ij}^{kl}$ from \eqref{e:Xijkl-def}.
In words, $T_S(A)$ switches the edges $S$ if they are contained in $A$ and are switchable
in the sense that the switching results again in a $d$-regular graph.
Moreover, for $S,S'$ as above, we define
\begin{align}
  T_{S,S'}(A)
  \;\deq\; \begin{cases}
  T_{S'}(T_S(A))
    & \text{if } J(S,S')=1\\
    A & \text{otherwise} \,.
  \end{cases}
\end{align}
In words, $T_{S,S'}(A)$ switches the edges in $S$ and $S'$ if they are contained in $A$ and the two switchings do not interfere with each other.

\begin{figure}
\begin{center}
\input{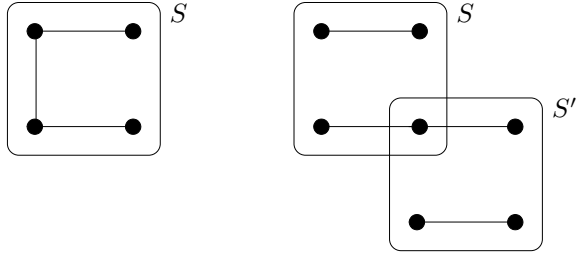}
\end{center}
\caption{
In the left diagram, $I(S)=0$ since the restricted graph is not $1$-regular.
In the right diagram, $J(S,S')=0$ since the two sets of vertices intersect.
\label{fig:simpswitchIJ}
}
\end{figure}

\begin{lemma} \label{lem:switchinv}
For any fixed $S,S'$ we have $A \eqdist T_S(A)$ and $A \eqdist T_{S,S'}(A)$.
\end{lemma}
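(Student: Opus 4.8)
The plan is to establish the stronger statement that switchings define a measure-preserving \emph{involution} on the set of simple $d$-regular graphs, from which $A \eqdist T_S(A)$ follows immediately since the uniform measure is the unique invariant measure of any bijection of a finite set. First I would fix $S = \{ij, kl\}$ and verify that $T_S$ is an involution: if $I(S) = 0$ then $T_S$ acts trivially and the claim is vacuous, so suppose $I(S) = 1$, i.e.\ $|[S]| = 4$ and $E|_{[S]}$ is $1$-regular. The $1$-regularity means $E|_{[S]}$ consists of exactly two of the three possible perfect matchings' worth of edges—precisely one matching—on $\{i,j,k,l\}$, and the two non-trivial cases in \eqref{e:def_switching} correspond to the matching being $\{\{i,j\},\{k,l\}\}$ or $\{\{i,k\},\{j,l\}\}$. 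The key point is that applying $\xi_{ij}^{kl}$ toggles between these two matchings while leaving all other entries of $A$ untouched; in particular the value of the indicator $I(S)$ is unchanged by the switching (it still sees a $1$-regular restriction on the same vertex set), and a second application of $T_S$ returns to the original graph. One must also check that $T_S(A)$ is again a simple $d$-regular graph: the switching adds and removes the same number of edges at each of the four vertices, so all degrees are preserved, the diagonal stays zero, and no edge-multiplicity is created precisely because $I(S)=1$ guarantees the edges being added were previously absent.

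Next I would treat $T_{S,S'}$. When $J(S,S') = 0$ it acts trivially, so assume $[S] \cap [S'] = \emptyset$. Then the switching $T_S$ modifies only entries $A_{pq}$ with $p,q \in [S]$, and $T_{S'}$ only those with $p,q \in [S']$, and since these index sets are disjoint the two operations commute and do not interfere: applying $T_{S'}$ after $T_S$ does not change whether $I(S')$ held (its value depends only on entries indexed by $[S']$, which $T_S$ left alone), and vice versa. Hence $T_{S,S'} = T_{S'} \circ T_S = T_S \circ T_{S'}$ on the relevant set, and since each factor is an involution on simple $d$-regular graphs and they commute, the composition is again an involution. Again degrees and simplicity are preserved because each factor preserves them separately.

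Having shown that $T_S$ and $T_{S,S'}$ are bijections (indeed involutions) of the finite set of simple $d$-regular graphs, and that $A$ is uniformly distributed on this set, the distributional identities $A \eqdist T_S(A)$ and $A \eqdist T_{S,S'}(A)$ follow, since any bijection pushes the uniform measure forward to itself. I expect the main obstacle to be purely bookkeeping: carefully justifying that $I(S)$ (and $I(S')$) are unchanged under the respective switchings—this is the crux of the involution property—and that simplicity is not violated, which requires unpacking the definition \eqref{e:Idef} of $I(S)$ to see that a switching is only performed when it provably yields a valid simple $d$-regular graph. No delicate estimates are needed; the content is entirely combinatorial, and the proof is short once the invariance of the indicator functions under the switching action is isolated as the key lemma.
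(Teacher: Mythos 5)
Your proposal is correct and takes essentially the same route as the paper: the paper's proof is just the terse version, asserting that $T_S$ and $T_{S,S'}$ are involutions on the set of simple $d$-regular graphs and invoking uniformity, exactly as you argue. You have merely filled in the verification that the indicator $I(S)$ is preserved by the switching (giving the involution property) and that degrees and simplicity are maintained — details the paper dismisses as "easy to check."
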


\begin{proof}
It is easy to check that $T_S(A)$ is a $d$-regular graph if and only if $A$ is.
Moreover, $T_S(T_S(A)) = A$, so $T_S$ is a bijection on the set of $d$-regular graphs.
Since the distribution of $A$ is uniform, we obtain  $A \eqdist T_S(A)$.
The second claim follows similarly from $T_{S,S'}(T_{S,S'}(A))=A$.
\end{proof}

Now Proposition~\ref{prop:Jinv} follows easily.

\begin{proof}[Proof of Proposition~\ref{prop:Jinv}]
For any $f$, we get
\begin{align*}
  \sum_{i,j,m,n} \E\pb{ I_{ij}^{mn}(A) f(A) }
  &\;=\;
  \sum_{i,j,m,n} \E\pb{ A_{ij}A_{mn}I(\{ij,mn\};A) f(A) }
  \\
  &\;=\;
  \sum_{i,j,m,n} \E(A_{im}A_{jn}I(\{ij,mn\}; A + \xi_{ij}^{mn}) f(A+\xi_{ij}^{mn}))
  \\
  &\;=\;
  \sum_{i,j,m,n} \E\pb{A_{ij}A_{mn}I(\{ij,mn\}; A) f(A-\xi_{ij}^{mn})}
  \\
  &\;=\;
  \sum_{i,j,m,n} \E(I_{ij}^{mn}(A) f(A-\xi_{ij}^{mn}))
  \,,
\end{align*}
where the first and last steps follows from the definition of $I_{ij}^{mn}$,
the second step from Lemma~\ref{lem:switchinv},
and the third step from the exchangability of $i,j,m,n$ and using $I(S;A) = I(S;T_S(A))$.
This concludes the proof.
\end{proof}

For the proof of Proposition~\ref{prop:JLapprox}
we shall need estimates on the moments of entries of the adjacency matrix,
as well as estimates on such moments restricted to low-probability events
where the indicator functions \eqref{e:Idef}--\eqref{e:Jdef} are zero.
These estimates are collected in the following sequence of lemmas.

The following two lemmas show that moments of the entries of the adjacency matrix behave
roughly like those of an Erd\H{o}s-R\'enyi graph.

\begin{lemma} \label{lem:Amom}
Let $b \ll N$ and $i_1,j_1, \dots, i_b, j_b \in \qq{1,N}$.
Then for any $p \in \qq{1,N}$ and $q \in \qq{1,N} \setminus \{i_1,j_1, \dots, i_b,j_b\}$, we have
\begin{equation} \label{e:Amom}
  \E(A_{i_1j_1} \cdots A_{i_bj_b} A_{pq}) \;=\; O\pB{\frac{d}{N}} \E(A_{i_1j_1} \cdots A_{i_bj_b}) \,,
\end{equation}
where we use the convention $\E(A_{i_1j_1} \cdots A_{i_bj_b}) = 1$ if $b=0$.
\end{lemma}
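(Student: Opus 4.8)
The plan is to rewrite \eqref{e:Amom} as a bound on a conditional probability and prove it by a switching argument. Let $\cal G$ be the set of simple $d$-regular graphs on $\qq{1,N}$. Discarding the trivial cases where some $i_k=j_k$ or $p=q$ (all sides then vanish), write $\cal E \deq \hb{\h{i_1,j_1},\dots,\h{i_b,j_b}}$ for the set of distinct forced edges, and put $\cal A \deq \h{G\in\cal G : \cal E\subseteq E(G)}$ and $\cal B\deq \h{G\in\cal A : \h{p,q}\in E(G)}$. Since the $A_{i_kj_k}$ and $A_{pq}$ take values in $\{0,1\}$, we have $\E(A_{i_1j_1}\cdots A_{i_bj_b}) = \abs{\cal A}/\abs{\cal G}$ and $\E(A_{i_1j_1}\cdots A_{i_bj_b}A_{pq}) = \abs{\cal B}/\abs{\cal G}$, so it suffices to show $\abs{\cal B}\leq (1+o(1))\tfrac dN\abs{\cal A}$.

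For this I would use the simple switching \eqref{e:Xijkl-def} that deletes the edge $\h{p,q}$: given $G\in\cal B$ and an ordered pair $(x,y)$ with $\h{x,y}\in E(G)$, set
\begin{equation*}
\sigma_{(x,y)}(G) \;\deq\; G - \h{p,q} - \h{x,y} + \h{p,x} + \h{q,y}\,,
\end{equation*}
i.e.\ the graph obtained from $G$ by the switching $\xi_{pq}^{xy}$. Call $(x,y)$ \emph{admissible} if $x,y\notin\h{p,q}$, $\h{x,y}\notin\cal E$, and $\h{p,x},\h{q,y}\notin E(G)$; a short check of the degenerate cases shows that $\sigma_{(x,y)}(G)$ is then again a simple $d$-regular graph lying in $\cal A$ but not containing $\h{p,q}$, hence $\sigma_{(x,y)}(G)\in\cal A'\deq\cal A\setminus\cal B$. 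The key point is that, because $q\notin\h{i_1,j_1,\dots,i_b,j_b}$, neither deleted edge $\h{p,q}$ nor $\h{x,y}$ can lie in $\cal E$ once $\h{x,y}\notin\cal E$ is imposed; this remains true even if $p$ is one of the forced vertices, and it is exactly here that the restriction on $q$ in the statement is used. Counting: among the $dN$ ordered pairs $(x,y)$ with $\h{x,y}\in E(G)$, at most $4d$ have an endpoint in $\h{p,q}$, at most $2b$ have $\h{x,y}\in\cal E$, and at most $2d^2$ have $\h{p,x}\in E(G)$ or $\h{q,y}\in E(G)$; since $d\leq N^{2/3-\alpha}=o(N)$ and $b\ll N$, at least $dN(1-o(1))$ pairs are admissible.

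Now double count the set of \emph{switching moves}, i.e.\ pairs $(G,(x,y))$ with $G\in\cal B$ and $(x,y)$ admissible for $G$. Grouping by $G$, their number $S$ satisfies $S \geq \abs{\cal B}\cdot dN(1-o(1))$ by the previous paragraph. Grouping instead by the image $G'\deq\sigma_{(x,y)}(G)\in\cal A'$: any move with image $G'$ satisfies $G = G' - \h{p,x} - \h{q,y} + \h{p,q} + \h{x,y}$ with $x$ a neighbour of $p$ and $y$ a neighbour of $q$ in $G'$, and is therefore determined by $G'$ together with $(x,y)$, of which there are at most $d^2$ choices; hence $S \leq \abs{\cal A'}\cdot d^2 \leq \abs{\cal A}\cdot d^2$. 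Combining the two bounds gives $\abs{\cal B}\leq (1+o(1))\tfrac dN\abs{\cal A}$, which is \eqref{e:Amom}.

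The only genuinely fiddly part is the enumeration in the admissibility count: one must list the degenerate configurations (loops, multi-edges, accidental coincidences among $p,q,x,y$ and among the removed and added edges) that have to be excluded so that $\sigma_{(x,y)}(G)$ is a bona fide element of $\cal A'$, and verify that the discarded pairs number $o(dN)$. Everything else — the reduction to a ratio of cardinalities and the double-counting inequality — is routine.
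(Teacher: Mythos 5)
Your switching double-count is correct (the degenerate-configuration checks you flag all work out in the way you sketch), but it is a genuinely different and noticeably longer route than the one taken in the paper. The paper's proof is a one-line symmetry argument: writing $X \deq A_{i_1j_1}\cdots A_{i_bj_b}$ and $I\deq\{i_1,j_1,\dots,i_b,j_b,p\}$, it uses the hard degree constraint $\sum_n A_{pn}=d$ together with $X A_{pn}\geq 0$ to get $\E(X)\geq\frac1d\sum_{n\notin I}\E(XA_{pn})$, and then permutation invariance of the law of $A$ (over labels outside $I$) to conclude $\E(X)\geq\frac{N-|I|}{d}\E(XA_{pq})$, whence the claim since $|I|\leq 2b+1\ll N$. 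That argument needs nothing beyond the row-sum constraint, nonnegativity, and exchangeability, and in particular does not invoke simplicity of the graph or any switching machinery; it also yields the bound without an $o(1)$-analysis of bad configurations. Your approach instead reduces \eqref{e:Amom} to a cardinality ratio $|\mathcal B|/|\mathcal A|$ and controls it by a forward/backward count of simple switchings removing the edge $\{p,q\}$: this gives the slightly sharper constant $(1+o(1))\frac dN$, and has the pedagogical merit of fitting the switching theme of Section~\ref{sec:switch2}, but it requires carefully excluding loops, multi-edges and coincidences, and it depends on $d=o(N)$ and $b\ll N$ to make those exclusions negligible, conditions the paper's argument uses only through the bound $|I|\leq N/2$. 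Both proofs correctly use the hypothesis $q\notin\{i_1,j_1,\dots,i_b,j_b\}$: you use it to guarantee $\{p,q\}\notin\mathcal E$ so the switching lands in $\mathcal A$, while the paper uses it to place $q\notin I$ so that $\E(XA_{pq})$ is one of the exchangeable terms in the sum.
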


\begin{proof}
Set $X \deq A_{i_1j_1}\cdots A_{i_bj_b}$ and $I \deq \{i_1,j_1, \dots, i_b, j_b, p\}$.
Then, since $\sum_{n} A_{pn} = d$ for any $p$, we find for any $q \notin I$ that
\begin{align*}
  \E(X)
  &\;=\; \frac{1}{d} \sum_{n}\E(X A_{pn}) 
  \;=\; \frac{1}{d} \sum_{n \not \in I}\E(X A_{pn}) 
  + \frac{1}{d} \sum_{n \in I}\E(X A_{pn})
  \\
  &\;\geq\; \frac{1}{d} \sum_{n \not \in I}\E(X A_{pn}) \;=\; \frac{N-|I|}{d} \E(X A_{pq})\,,
\end{align*}
where in the third step we used that $X A_{pn} \geq 0$ and in the last step that the law of $A$ is invariant under
permutation of vertices.
Using that $\abs{I} \leq N/2$ by assumption on $b$, the claim now follows.
\end{proof}

As a consequence of Lemma~\ref{lem:Amom}, we obtain the following explicit bounds.

\begin{lemma} \label{lem:Amom24}
Suppose that $|\h{i,j,m,n}| = 4-a$ and $|\h{i,j,k,l,m,n,p,q}| = 8-b$. Then
\begin{align} \label{e:Amom2}
  \E(A_{ij}A_{mn}) &\;=\; O\pB{\frac{d}{N}}^{2-\lfloor a/2\rfloor} \,,
  \\
  \label{e:Amom4}
  \E(A_{ij}A_{mn}A_{kl}A_{pq}) &\;=\; O\pB{\frac{d}{N}}^{4-\lfloor b/2 \rfloor} \,.
\end{align}
\end{lemma}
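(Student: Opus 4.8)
The plan is to deduce both estimates from a single general bound: if $e_1,\dots,e_k$ are edges of the complete graph on $\qq{1,N}$ (that is, two-element subsets of $\qq{1,N}$) whose union contains exactly $V$ vertices, then, writing $A_e \deq A_{pq}$ for $e=\{p,q\}$, one has $\E(A_{e_1}\cdots A_{e_k}) = O(d/N)^{\lceil V/2\rceil}$. Granting this, the lemma follows from the elementary identities $\lceil (4-a)/2\rceil = 2-\lfloor a/2\rfloor$ and $\lceil (8-b)/2\rceil = 4-\lfloor b/2\rfloor$: indeed \eqref{e:Amom2} is the case $k=2$, $V=4-a$ (with edges $\{i,j\}$ and $\{m,n\}$), and \eqref{e:Amom4} is the case $k=4$, $V=8-b$ (with edges $\{i,j\},\{m,n\},\{k,l\},\{p,q\}$).

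To prove the general bound I would induct on the number of edges $k$. If some $e_i$ is a loop, then $A_{e_i}$ vanishes (diagonal entries of $A$ are zero) and the left-hand side is $0$, so I may assume every $e_i$ has two distinct endpoints; the case $k=0$ is trivial. For the inductive step, call a vertex of $e_i$ \emph{private} if it is incident to no other $e_j$. If some edge $e_i=\{u,v\}$ has a private vertex $v$, then Lemma~\ref{lem:Amom} applies with $X\deq\prod_{j\neq i}A_{e_j}$, $p=u$, and $q=v$ (the hypothesis that $q$ avoids the endpoints of the other edges being exactly the privacy of $v$, and the constraint on the number of factors being vacuous here), giving $\E(A_{e_1}\cdots A_{e_k}) = O(d/N)\,\E(X)$. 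The $k-1$ edges remaining in $X$ span at least $V-2$ vertices, so the induction hypothesis bounds $\E(X)$ by $O(d/N)^{\lceil(V-2)/2\rceil} = O(d/N)^{\lceil V/2\rceil-1}$, and the prefactor $O(d/N)$ restores the exponent. Otherwise every endpoint of every edge is shared with another edge; then I drop one edge using $A_{e_i}\leq 1$, so $\E(A_{e_1}\cdots A_{e_k})\leq\E(\prod_{j\neq i}A_{e_j})$, and since $e_i$ had no private vertex the remaining $k-1$ edges still span all $V$ vertices, so the induction hypothesis finishes the step. Throughout I use that $d/N\to 0$ (which holds since $d\leq N^{2/3-\alpha}$), so that $O(d/N)^m\leq O(d/N)^{m'}$ whenever $m\geq m'$; this is what lets the ceiling-bookkeeping get away with an inequality rather than an equality in the exponent.

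I do not expect a serious obstacle: the argument is just a greedy peeling of edges with private vertices, falling back on the crude bound $A_e\leq 1$ only for edges all of whose vertices are already pinned down by the others. The one place that demands care is checking that the two cases of the inductive step always make progress --- in particular that in the drop-an-edge case the vertex count really is unchanged, and that the recursion cannot stall (for $k=1$ one is always in the first case, since both endpoints of the lone edge are private), so that it terminates with the advertised exponent. It is also worth noting that repeated edges $e_i=e_j$ cause no trouble: such an edge has no private vertex and is handled by the second case, or one may simply preprocess via $A_{e_i}^2=A_{e_i}$ to assume the $e_i$ distinct.
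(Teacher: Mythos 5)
Your argument is correct and rests on the same mechanism the paper invokes: iterated application of Lemma~\ref{lem:Amom} to strip off one edge at a time, which the paper compresses into the single sentence ``follow easily from Lemma~\ref{lem:Amom}.'' Your formulation as a clean induction on the number of edges, with the uniform bound $\E(A_{e_1}\cdots A_{e_k})=O(d/N)^{\lceil V/2\rceil}$ and the private-vertex/drop-an-edge dichotomy, is a careful and slightly more general write-up of that argument; note in particular that it handles all values of $b$ up to $6$ (e.g.\ all four edges coinciding), whereas the paper's remark that $b\leq 4$ under pairwise distinctness is a harmless but minor overstatement.
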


\begin{proof}
Since $A_{ss}=0$ for all $s$, we can assume that $i\neq j$, $m\neq n$, $k\neq l$, and $p\neq q$,
and thus $a \leq 2$ and $b \leq 4$. Then \eqref{e:Amom2}--\eqref{e:Amom4} follow easily from Lemma~\ref{lem:Amom}.
\end{proof}

In the next two lemmas, we estimate moments restricted to low-probability events
where the indicator functions \eqref{e:Idef}--\eqref{e:Jdef} vanish,
i.e.\ we estimate the contribution of graphs $A$ that are not switchable.
Throughout the rest of this section, for given indices $i,j,k,l,m,n,p,q$ we use the abbreviations
\begin{gather} \label{e:ind_fcts1}
  I_1 \;\deq\; I(\h{ij,mn};A) \,,\quad
  I_2 \;\deq\; I(\h{kl,pq};A) \,,
  \\
  \label{e:ind_fcts2}
  J_{12} \;\deq\; J(\h{ij,mn},\h{kl,pq};A) \,, \quad
  I_{12} \;\deq\; I_1I_2J_{12} \,,
\end{gather}
with $I$ and $J$ defined in \eqref{e:Idef}--\eqref{e:Jdef}.

\begin{lemma} \label{lem:indswitch}
Let $|\h{i,j,m,n}|=4-a$ and $|\h{i,j,k,l,m,n,p,q}|=8-b$. Then
\begin{align} \label{e:I1bd}
  \E((A_{ij}A_{mn}+A_{im}A_{jn})(1-I_1)) &\;=\; O\pB{\frac{d}{N}}^{3-a} \,.
  \\
  \label{e:IJbd}
  \E((A_{ij}A_{mn}+A_{im}A_{jn}) (A_{kl}A_{pq}+A_{kp}A_{lq}) (1-I_{12})) &\;=\; O\pB{\frac{d}{N}}^{5-b} \,.
\end{align}
\end{lemma}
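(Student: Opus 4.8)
The strategy is to reduce both estimates to an application of Lemma~\ref{lem:Amom24} after carefully analyzing what failure of the indicator functions $I_1$ and $I_{12}$ means combinatorially. For \eqref{e:I1bd}, recall from \eqref{e:Idef} that $I_1 = I(\{ij,mn\};A) = \ind{|\{i,j,m,n\}|=4 \text{ and } E|_{\{i,j,m,n\}} \text{ is $1$-regular}}$. Thus on the event $\{A_{ij}A_{mn}=1\} \cap \{I_1 = 0\}$, either the four vertices are not distinct (i.e.\ $a \geq 1$), or at least one of the four ``forbidden'' edges $\{i,m\},\{i,n\},\{j,m\},\{j,n\}$ is present in addition to $\{i,j\}$ and $\{m,n\}$. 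The first case is absorbed directly into the bound since $a \geq 1$ already improves the exponent in Lemma~\ref{lem:Amom24}. In the second case, the product $A_{ij}A_{mn}(1-I_1)$ is bounded by a sum of terms of the form $A_{ij}A_{mn}A_{xy}$ with $\{x,y\}$ one of the forbidden edges; since that edge shares vertices with $\{i,j,m,n\}$, the three edges $\{i,j\},\{m,n\},\{x,y\}$ together involve at most $4$ vertices, and the exponent in the resulting bound from Lemma~\ref{lem:Amom24} (applied with an appropriate relabeling, or directly from iterating Lemma~\ref{lem:Amom}) is at least $3-a$ as claimed — the key point being that adding a distinct edge among four vertices that already carry two disjoint edges forces a genuinely new constraint worth a factor $d/N$. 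The term $A_{im}A_{jn}(1-I_1)$ is handled symmetrically: note $I(\{ij,mn\};A) = I(\{im,jn\};A)$ because $1$-regularity of $E|_{\{i,j,m,n\}}$ is a symmetric condition, so the same case analysis applies verbatim with the roles of the present/forbidden edges permuted.

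For \eqref{e:IJbd}, I would first write $1 - I_{12} = 1 - I_1 I_2 J_{12} \leq (1-I_1) + (1-I_2) + (1-J_{12})$ by a union bound, and estimate the three contributions separately. The $(1-I_1)$ and $(1-I_2)$ contributions reduce to \eqref{e:I1bd}: for the first, we factor out $A_{kl}A_{pq}+A_{kp}A_{lq}$, which by Lemma~\ref{lem:Amom24} (or by iterating Lemma~\ref{lem:Amom}, inserting the two extra edges on the remaining vertices) costs a factor $O(d/N)^{2-\lfloor \cdot /2\rfloor}$, and multiply by the $O(d/N)^{3-a'}$ bound from \eqref{e:I1bd} for the first four-vertex block; a careful bookkeeping of how the shared vertices among all eight indices contribute to $b$ shows the total exponent is at least $5-b$. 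The $(1-J_{12})$ contribution is the genuinely new piece: here all four edges $\{i,j\},\{m,n\},\{k,l\},\{p,q\}$ may be ``legally'' present (both blocks $1$-regular), but the two vertex sets $\{i,j,m,n\}$ and $\{k,l,p,q\}$ overlap, i.e.\ $[S] \cap [S'] \neq \emptyset$, which forces $b \geq 1$. On this event $A_{ij}A_{mn}A_{kl}A_{pq}(1-J_{12}) \leq A_{ij}A_{mn}A_{kl}A_{pq}$, and Lemma~\ref{lem:Amom24} gives $O(d/N)^{4-\lfloor b/2\rfloor}$; since $b \geq 1$ here, $4 - \lfloor b/2 \rfloor \geq 4$ when $b=1$, and more generally one checks $4 - \lfloor b/2 \rfloor \geq 5 - b$ for all $b \in \{1,2,3,4\}$ (this is where one uses that the overlap itself is ``costing'' constraints — each shared vertex in a would-be-disjoint configuration is as good as a new edge). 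The cross terms $A_{ij}A_{mn}A_{kp}A_{lq}$ etc.\ arising from expanding the two sums are handled identically, noting that $J_{12}$ only depends on the vertex sets, not on which of the two switchings is considered.

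The main obstacle — and the step I would be most careful about — is the bookkeeping in the $(1-I_1)$ and $(1-I_2)$ contributions to \eqref{e:IJbd}: one must verify that when an extra forbidden edge is present in one block \emph{and} the two blocks may or may not share vertices, the improvements coming from the extra edge (factor $O(d/N)^{3-a}$ on four vertices) and from the second block (factor $O(d/N)^{2-\lfloor\cdot/2\rfloor}$ on the other four vertices) combine to beat $O(d/N)^{5-b}$ uniformly over all coincidence patterns of the eight indices. The cleanest way to do this is probably not to invoke Lemma~\ref{lem:Amom24} as a black box but to apply Lemma~\ref{lem:Amom} directly and greedily: start from the monomial of up-to-five edges on up-to-eight vertices, and repeatedly strip off a factor $A_{pq}$ with $q$ not yet seen, each strip paying $O(d/N)$; the number of such strips is exactly (number of edges) minus (number of ``dependencies'' among the edges, i.e.\ cycles and shared-vertex coincidences), and one checks this count is always $\geq 5-b$. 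Everything else is a routine union bound and case split, so the entire lemma is ultimately elementary once this counting is set up correctly.
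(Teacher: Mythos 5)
Your overall plan is sound and the result can be reached your way, but the argument as written contains a real gap in the bookkeeping for the $(1-I_1)$ and $(1-I_2)$ contributions to \eqref{e:IJbd}, and the decomposition you choose makes this gap harder to close than it needs to be. After you insert the forbidden edge (say $\{j,n\}$) to bound $1-I_1$, you have five edges $\{i,j\},\{m,n\},\{k,l\},\{p,q\},\{j,n\}$. Your claim is that greedy pendant-stripping via Lemma~\ref{lem:Amom} yields ``(number of edges) minus (number of dependencies, i.e.\ cycles and shared-vertex coincidences)'' factors of $d/N$. That formula is not correct: a cycle does not merely cost one strip, it blocks \emph{all} stripping within its component. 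Concretely, take $a=0$, $b=2$, with $k=i$, $l=m$ and $p,q$ fresh: the five edges are $\{i,j\},\{m,n\},\{i,m\},\{p,q\},\{j,n\}$, and $\{i,j\},\{j,n\},\{n,m\},\{m,i\}$ form a $4$-cycle with no pendant vertex. Greedy stripping gives only \emph{one} factor $d/N$ (from $\{p,q\}$), not the $5-b=3$ you need, whereas your ``edges minus cycles'' count predicts $4$. The bound is still true, but to get it you must first \emph{discard} an edge to break the cycle (e.g.\ drop $A_{mn}A_{im}\leq 1$, then strip $\{p,q\},\{i,j\},\{j,n\}$), i.e.\ strip a spanning forest rather than the original edge set. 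With that correction the count becomes $v-c$ (vertices minus components), and the key observation that makes it work is that the inserted forbidden edge always joins $\{i,j\}$ and $\{m,n\}$ into a single component, so $c\leq 3$ and hence $v-c\geq(8-b)-3=5-b$. None of this appears in your proposal; you defer it as ``routine,'' but the naive greedy count you describe actually fails.

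The paper's proof sidesteps the issue entirely and is worth contrasting. It does \emph{not} use your union bound $1-I_{12}\leq(1-I_1)+(1-I_2)+(1-J_{12})$. Instead it splits only on $b$: if $b>0$ then $I_{12}\equiv 0$ (since $I_{12}=1$ forces all eight indices distinct), so one bounds $1-I_{12}$ by $1$ and applies Lemma~\ref{lem:Amom24} \emph{without inserting any extra edge}, checking $4-\lfloor b/2\rfloor\geq 5-b$ for $b\geq 1$; if $b=0$ then $J_{12}=1$, and $I_1I_2=0$ is bounded by the sum of the eight forbidden edges, each of which produces five edges on eight \emph{distinct} vertices, where the extra edge only creates a $3$-edge path and no cycle can arise, so greedy stripping does give $O(d/N)^5$ directly. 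Inserting the forbidden edge only when the index set is generic is precisely what keeps the combinatorics cycle-free. Your route can be repaired (replace greedy stripping by the spanning-forest argument and prove $c\leq 3$), but as stated the counting step is wrong, and the paper's two-case split is cleaner and avoids the need for it.

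Two smaller remarks. First, the heuristic ``factor out $A_{kl}A_{pq}+A_{kp}A_{lq}$ and multiply the two bounds'' is not rigorous since the expectations do not factor, though you acknowledge this and retreat to the greedy argument. Second, your treatment of the $(1-J_{12})$ piece is fine as far as it goes, but note that $J_{12}$ is deterministic in the indices, so $1-J_{12}$ is either identically $0$ or identically $1$; when it is $1$ you indeed have $b\geq 1$, and the check $4-\lfloor b/2\rfloor\geq 5-b$ is correct.
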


\begin{proof}
First, assume that $i,j,k,l,m,n,p,q$ are all distinct, i.e.\ we consider the case $a=b=0$.
Then, since $|\h{i,j,m,n}|=4$ and $I_1 = 0$ implies that the graph $A$ restricted to $\{i,j,m,n\}$ is not $1$-regular,
we find
\begin{align*}
  \E(A_{ij}A_{mn}(1-I_1)) &\;\leq\; \E(A_{ij}A_{mn}(A_{im}+A_{in}+A_{jm}+A_{jn})) \,,
  \\
  \E(A_{im}A_{jn}(1-I_1))&\;\leq\; \E(A_{im}A_{jn}(A_{ij}+A_{mn}+A_{in}+A_{jm})) \,, 
\end{align*}
and Lemma~\ref{lem:Amom} implies that the right-hand sides are bounded by $O(d/N)^3$.
The proof of \eqref{e:IJbd} for $b = 0$ is analogous. We only consider the term $A_{ij}A_{kl}A_{mn}A_{pq}$;
the others dealt with similarly.
First, note that $J_{12} = 1$ if $|\h{i,j,k,l,m,n,p,q}|=8$.
Since $|\h{i,j,k,l,m,n,p,q}|=8$ and $I_1 I_2 = 0$ imply that $E \vert_{\h{i,j,m,n}}$ or $E \vert_{\h{k,l,p,q}}$
has at least three edges, we find
\begin{align*}
  &\E\pb{(A_{ij}A_{mn}A_{kl}A_{pq}(1-I_1I_2J_{12})}
  \;=\;
  \E\pb{(A_{ij}A_{mn}A_{kl}A_{pq}(1-I_1I_2)}
  \\
  \;\leq\;&
    \E\pb{(A_{ij}A_{mn}A_{kl}A_{pq}(A_{im} + A_{in} + A_{jm} + A_{jn} + A_{kp} + A_{kq} + A_{lp} + A_{lq})}
    \;=\; O\pB{\frac{d}{N}}^5\,,
\end{align*}
where the last step follows from Lemma \ref{lem:Amom}.

Finally, if $a>0$ we have $I_1=0$, and if $b>0$ we have $I_{12}=0$.
In these cases, we can directly apply \eqref{e:Amom2} and \eqref{e:Amom4}, respectively,
and the claim follows since $2- \floor{a/2} \geq 3 - a$ if $a>0$
and $4-\floor{b/2} \geq 5-b$ if $b>0$.
\end{proof}

As a consequence of Lemma~\ref{lem:indswitch}, we obtain the following averaged estimates.

\begin{lemma} \label{lem:indswitchsum}
If $|\h{i,j}|=2-a$ and $|\h{i,j,k,l}|=4-b$, then
\begin{align} \label{e:I1fixedbd}
  \frac{1}{N^2} \sum_{m,n} \E((A_{ij}A_{mn}+A_{im}A_{jn})(1-I_1)) &\;=\; O\pB{\frac{d}{N}}^{3-a} \,,
  \\
  \label{e:IJfixedbd}
  \frac{1}{N^4} \sum_{m,n}\sum_{p,q} \E((A_{ij}A_{mn}+A_{im}A_{jn}) (A_{kl}A_{pq}+A_{kp}A_{lq}) (1-I_{12})) &\;=\; O\pB{\frac{d}{N}}^{5-b} \,.
\end{align}
Moreover,
\begin{align} \label{e:I1sumbd}
  \frac{1}{N^4} \sum_{i,j,m,n} \E((A_{ij}A_{mn}+A_{im}A_{jn})(1-I_1)) &\;=\; O\pB{\frac{d}{N}}^3 \,,
  \\
  \label{e:IJsumbd}
  \frac{1}{N^8} \sum_{i,j,m,n}\sum_{k,l,p,q} \E((A_{ij}A_{mn}+A_{im}A_{jn}) (A_{kl}A_{pq}+A_{kp}A_{lq}) (1-I_{12})) &\;=\; O\pB{\frac{d}{N}}^{5} \,.
\end{align}
\end{lemma}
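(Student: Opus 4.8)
The plan is to derive Lemma~\ref{lem:indswitchsum} from Lemma~\ref{lem:indswitch} by a straightforward counting argument, organizing the sums according to how many coincidences occur among the free indices. I would first treat \eqref{e:I1fixedbd}. Here $i,j$ are fixed with $|\h{i,j}| = 2-a$, and we sum over $m,n \in \qq{1,N}$. For each choice of $(m,n)$, the quantity $|\h{i,j,m,n}|$ equals $4 - a'$ for some $a' \geq a$, and Lemma~\ref{lem:indswitch}, specifically \eqref{e:I1bd}, gives that each summand is $O(d/N)^{3-a'}$. The number of pairs $(m,n)$ with $|\h{i,j,m,n}| = 4-a'$ is $O(N^{2-(a'-a)})$: indeed, each additional coincidence beyond the $a$ already forced by $\h{i,j}$ removes one free index, costing a factor of $N$. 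Hence the contribution of the terms with a given $a'$ to $\frac{1}{N^2}\sum_{m,n}$ is $O(N^{-(a'-a)}) \cdot O(d/N)^{3-a'}$. Since $d/N \leq 1$, we have $O(d/N)^{3-a'} \leq O(d/N)^{3-a}\cdot (N/d)^{a'-a} \leq O(d/N)^{3-a}\cdot N^{a'-a}$, so each such contribution is $O(d/N)^{3-a}$, and summing over the finitely many values of $a'$ gives \eqref{e:I1fixedbd}. Estimate \eqref{e:IJfixedbd} follows in exactly the same way from \eqref{e:IJbd}: with $i,j,k,l$ fixed and $|\h{i,j,k,l}| = 4-b$, summing over $m,n,p,q$ and splitting according to $|\h{i,j,k,l,m,n,p,q}| = 8 - b'$ with $b' \geq b$, each summand is $O(d/N)^{5-b'}$ and the number of $(m,n,p,q)$ in each class is $O(N^{4-(b'-b)})$, yielding $O(d/N)^{5-b}$ after the same $d/N \leq 1$ manipulation.

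For \eqref{e:I1sumbd} and \eqref{e:IJsumbd} I would proceed analogously but now summing over all of $i,j,m,n$ (respectively $i,j,k,l,m,n,p,q$) with no indices fixed. Split $\sum_{i,j,m,n}$ according to $|\h{i,j,m,n}| = 4-a$; the number of quadruples in each class is $O(N^{4-a})$, and by \eqref{e:I1bd} each summand is $O(d/N)^{3-a}$, so the class contributes $\frac{1}{N^4}\cdot O(N^{4-a})\cdot O(d/N)^{3-a} = O(N^{-a})\cdot O(d/N)^{3-a} \leq O(d/N)^3$ again using $d/N \leq 1$. Summing over $a \in \h{0,1,2,3}$ (values $a > 3$ make $3-a$ negative but those classes are empty or the diagonal terms $A_{ss} = 0$ kill them) gives \eqref{e:I1sumbd}. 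The proof of \eqref{e:IJsumbd} is identical, splitting $\sum_{i,j,k,l,m,n,p,q}$ by $|\h{i,j,k,l,m,n,p,q}| = 8-b$, using that each class has $O(N^{8-b})$ elements and each summand is $O(d/N)^{5-b}$ by \eqref{e:IJbd}.

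I do not expect any real obstacle here; the only mild subtlety is making sure the bookkeeping of coincidences is consistent — i.e.\ that an extra coincidence among the summation indices simultaneously increases the exponent in the $O(d/N)$ power bound coming from Lemma~\ref{lem:indswitch} and decreases the count of index tuples by a compensating factor of $N$, so that the net effect is controlled by the worst case (fewest coincidences). Since $d/N \leq 1$ throughout, improving the exponent can only help, so in all cases the dominant term is the one with the minimal forced number of coincidences, and one just has to check that this dominant term is exactly the claimed bound. A clean way to phrase this uniformly is: for any nonnegative integer $s$, the number of ways to extend a fixed set of indices by new indices so that exactly $s$ new coincidences occur is $O(N^{(\text{number of new indices}) - s})$, while the per-term bound gains a factor $O(d/N)^s \leq O(N/d)^{-s}$ times the $s=0$ bound, i.e.\ loses at most $N^s$; the two balance, and I would state this once and apply it four times.
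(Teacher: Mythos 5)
Your proof is correct and takes essentially the same approach as the paper: split the summation according to the number of additional coincidences $s$ among the free indices, apply Lemma~\ref{lem:indswitch} to each class, and observe that the loss of a factor $N^s$ in the count is compensated (in fact over-compensated, by a factor $d^{-s}$) by the gain in the per-term bound.
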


\begin{proof}
To prove \eqref{e:I1fixedbd}, we split the summation over $m,n$ by fixing $|\h{i,j,m,n}|=4-a-s$ where $s \in \qq{0,2}$; there are $O(N^{2-s})$ terms corresponding to each $s \in \qq{0,2}$.
By \eqref{e:I1bd}, the left-hand side of \eqref{e:I1fixedbd} is bounded by
\begin{equation*}
  O\pB{\frac{d}{N}}^{3-a}
  +
  \sum_{s=1}^2 O(N^{-s}) O\pB{\frac{d}{N}}^{3-a-s}
  \;=\; O\pB{\frac{d}{N}}^{3-a} \,.
\end{equation*}
The proofs of \eqref{e:IJfixedbd}--\eqref{e:IJsumbd} are analogous.
\end{proof}

Finally, as a consequence of Lemmas~\ref{lem:Amom}--\ref{lem:indswitchsum} and the H\"older inequality, we obtain the following estimates
incorporating an arbitrary function $f(A)$.
These and the remainder of the proof of Proposition~\ref{prop:JLapprox} are simplest to state in terms of versions of the seminorms
\eqref{e:normt}--\eqref{e:normpart} for $t=0$ without rescaling by $(d-1)^{-1/2}$.
Thus, instead of \eqref{e:normt} and \eqref{e:normpart}, we use the seminorms
\begin{equation*}
  \|f\|_r \;\deq\; \pb{\E |f(A)|^r }^{1/r}
\end{equation*}
and
\begin{equation*}
  \|\partial^n f\|_{r}
  \;\deq\;
  \normbb{\sup_{\theta \in [0,1]^n}\sup_{X \in \cal X^n} \absb{\partial_{X_1} \cdots \partial_{X_n} f\p{\,\cdot\,+\theta \cdot X}}}_{r}\,.
\end{equation*}

\begin{lemma}
Fix $\varepsilon > 0$ and let $r \equiv r(\epsilon)$ be large enough depending on $\epsilon$.
Let $f \in \cal C^0(\cal M)$ satisfy $\|f\|_r \leq 1$. Then if $|\h{i,j}|=2-a$ and $|\h{i,j,k,l}| = 4-b$, we have
\begin{align}
  \label{e:AAFbd}
  \frac{1}{N^2} \sum_{m,n} \E\pb{A_{ij}A_{mn}f(A)}
  &\;=\; O\pB{\frac{d}{N}}^{2- \lfloor a/2 \rfloor - \varepsilon} 
  \,,
  \\
  \label{e:AAAAFbd}
  \frac{1}{N^4} \sum_{m,n,p,q} \E\pb{A_{ij}A_{mn}A_{kl}A_{pq}f(A)}
  &\;=\; O\pB{\frac{d}{N}}^{4 - \lfloor b/2 \rfloor - \varepsilon} 
  \,,
  \\
  \label{e:I1fixedFbd}
  \frac{1}{N^2} \sum_{m,n} \E\pb{(A_{ij}A_{mn} + A_{im}A_{jn})\bar I_1f(A)}
  &\;=\; O\pB{\frac{d}{N}}^{3-a-\varepsilon} 
  \,,
  \\
  \label{e:IJfixedFbd}
  \frac{1}{N^4}
  \sum_{m,n} \sum_{p,q} \E\pb{(A_{ij}A_{mn} + A_{im}A_{jn}) (A_{kl}A_{pq} + A_{kp}A_{lq}) \bar I_{12}f(A)}
  &\;=\; O\pB{\frac{d}{N}}^{5-b-\varepsilon} 
  \,,
  \\
  \label{e:I1sumFbd}
  \frac{1}{N^4} \sum_{i,j,m,n} \E\pb{(A_{ij}A_{mn} + A_{im}A_{jn})\bar I_1f(A)}
  &\;=\; O\pB{\frac{d}{N}}^{3-\varepsilon} 
  \,,
  \\
  \label{e:IJsumFbd}
  \frac{1}{N^8}
  \sum_{i,j,m,n} \sum_{k,l,p,q} \E\pb{(A_{ij}A_{mn} + A_{im}A_{jn}) (A_{kl}A_{pq} + A_{kp}A_{lq}) \bar I_{12}f(A)}
  &\;=\; O\pB{\frac{d}{N}}^{5-\varepsilon} 
  \,,
\end{align}
where $\bar I_1 \deq 1-I_1$, $\bar I_{12} \deq 1-I_{12}$, and
the indicator functions $I_1$ and $I_{12}$ were defined in \eqref{e:ind_fcts1}--\eqref{e:ind_fcts2}.
\end{lemma}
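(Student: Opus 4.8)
The plan is to reduce all six bounds to the moment estimates of Lemmas~\ref{lem:Amom24}--\ref{lem:indswitchsum} by inserting the factor $f(A)$ through H\"older's inequality, using crucially that the entries $A_{ij}$ and the indicators $I_1, I_{12}$ are $\{0,1\}$-valued. Let $r' \deq r/(r-1)$ be the H\"older conjugate of $r$, and let $P(A)$ denote any of the nonnegative products occurring in \eqref{e:AAFbd}--\eqref{e:IJsumFbd}, such as $A_{ij}A_{mn}$, or $(A_{ij}A_{mn}+A_{im}A_{jn})\bar I_1$, or the analogous fourfold expressions. Since each $A_{ij}$ and each of $\bar I_1, \bar I_{12}$ lies in $\{0,1\}$, the quantity $P(A)$ takes values in a bounded set of nonnegative integers, so there is an absolute constant $C$ with $P(A)^{r'} \leq C\, P(A)$ pointwise. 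Combining this with $\|f\|_r \leq 1$ and H\"older's inequality gives, for every fixed choice of indices,
\begin{equation*}
  \E\pb{P(A) f(A)} \;\leq\; \|f\|_r \pb{\E P(A)^{r'}}^{1/r'} \;\leq\; C \pb{\E P(A)}^{1-1/r}\,.
\end{equation*}

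Next I would average this over the free indices. Dividing by the appropriate power $N^m$ of $N$ and applying Jensen's inequality for the concave function $t \mapsto t^{1-1/r}$ (with respect to the uniform probability measure on the $N^m$ index tuples) yields
\begin{equation*}
  \frac{1}{N^m}\sum_{\mathrm{free}} \E\pb{P(A) f(A)} \;\leq\; C \pbb{\frac{1}{N^m}\sum_{\mathrm{free}} \E P(A)}^{1-1/r}\,.
\end{equation*}
For \eqref{e:I1fixedFbd}--\eqref{e:IJsumFbd} the inner averaged moments are bounded directly by \eqref{e:I1fixedbd}--\eqref{e:IJsumbd} of Lemma~\ref{lem:indswitchsum}, which gives $O(d/N)^{(3-a)(1-1/r)}$, $O(d/N)^{(5-b)(1-1/r)}$, $O(d/N)^{3(1-1/r)}$, and $O(d/N)^{5(1-1/r)}$, respectively. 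For \eqref{e:AAFbd} and \eqref{e:AAAAFbd}, where no pre-averaged estimate is available, I would instead split the sum over the free indices according to the finitely many coincidence patterns among the indices: a pattern in which $s$ equalities are imposed contributes only $O(N^{-s})$ of the normalized terms, while by Lemma~\ref{lem:Amom24} the corresponding $\E P(A)$ loses at most $s$ in the exponent; since $N^{-s}(d/N)^{-s} = d^{-s} \leq 1$, every such sub-sum is dominated by the generic one, so that $\frac{1}{N^2}\sum_{m,n}\E(A_{ij}A_{mn}) = O(d/N)^{2-\floor{a/2}}$ and $\frac{1}{N^4}\sum_{m,n,p,q}\E(A_{ij}A_{mn}A_{kl}A_{pq}) = O(d/N)^{4-\floor{b/2}}$ (both vanishing unless $i\neq j$ and $k\neq l$, which is the source of the floor functions). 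Hence the reduced bounds here are $O(d/N)^{(2-\floor{a/2})(1-1/r)}$ and $O(d/N)^{(4-\floor{b/2})(1-1/r)}$.

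Finally, I would absorb the factor $1-1/r$ in the exponents. In the regime $d \in [N^\alpha, N^{2/3-\alpha}]$ one has $N^{\alpha-1}\leq d/N \leq 1$, so for any $0 \leq k \leq 5$,
\begin{equation*}
  (d/N)^{k(1-1/r)} \;=\; (d/N)^{k-\varepsilon}\,(d/N)^{\varepsilon - k/r} \;\leq\; (d/N)^{k-\varepsilon}
\end{equation*}
whenever $r \geq k/\varepsilon$, since then $\varepsilon - k/r \geq 0$ and $d/N \leq 1$. Thus, choosing $r = r(\varepsilon)$ with $r \geq 5/\varepsilon$ converts each reduced bound into precisely the exponent claimed in the lemma, e.g.\ $(2-\floor{a/2})(1-1/r) \geq 2-\floor{a/2}-\varepsilon$, $(5-b)(1-1/r) \geq 5-b-\varepsilon$, and so on. There is no genuinely hard step here; the main (entirely organizational) obstacle is checking that a single $r$ works simultaneously for all six estimates and that the coincidence case analysis in \eqref{e:AAFbd}--\eqref{e:AAAAFbd} is always dominated by the generic index pattern, which it is because only finitely many patterns occur and each forced equality supplies a compensating factor $N^{-1}$.
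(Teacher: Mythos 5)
Your proposal is correct and is essentially the proof given in the paper: apply H\"older in $\E$ (using that the product is a bounded nonnegative integer so that $P^{r'}\lesssim P$), then Jensen/H\"older over the normalized index sums to pass to Lemma~\ref{lem:indswitchsum} (for \eqref{e:I1fixedFbd}--\eqref{e:IJsumFbd}) or to Lemma~\ref{lem:Amom24} together with a coincidence-pattern count (for \eqref{e:AAFbd}--\eqref{e:AAAAFbd}), and finally absorb $1-1/r$ into the exponents by choosing $r\gtrsim 1/\varepsilon$. You spell out the coincidence-pattern step for \eqref{e:AAFbd}--\eqref{e:AAAAFbd} in a bit more detail than the paper, which leaves it implicit, but the argument is the same.
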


\begin{proof}
We only prove \eqref{e:I1sumFbd}; the other estimates are proved similarly and we comment on
the differences at the end of the proof.
By H\"older's inequality, applied twice, first to $\E(\cdot)$ and then to the sum over $m,n$,
we obtain from \eqref{e:I1sumbd} that
\begin{align*}
&\mspace{-30mu}  \frac{1}{N^4} \sum_{i,j,m,n} \E\pb{(A_{ij}A_{mn} + A_{im}A_{jn})(1-I_1)f(A)}
\notag \\
  &\;\leq\;
  \frac{1}{N^4} \sum_{i,j,m,n} \qb{\E\pb{(A_{ij}A_{mn}+A_{im}A_{jn})(1-I_1)}}^{1 - 1/r} \|f\|_r
  \nonumber\\
  &\;\leq\;
  \pbb{ \frac{1}{N^4} \sum_{i,j,m,n} \E((A_{ij}A_{mn}+A_{im}A_{jn})(1-I_1))}^{1 - 1/r}  \|f\|_r
  \nonumber\\
  &\;\leq\;
  O\pB{ \frac{d}{N} }^{3 - 3/r}  \|f\|_r
  \;=\; O\pB{\frac{d}{N}}^{3-\varepsilon} \|f\|_r\,,
\end{align*}
where we chose $r$ large enough that $3/r \leq \epsilon$.

To prove \eqref{e:IJsumFbd}, we use \eqref{e:IJsumbd} instead of \eqref{e:I1sumbd},
and to prove \eqref{e:AAFbd}--\eqref{e:AAAAFbd} we apply \eqref{e:Amom} instead of \eqref{e:I1bd}.
To prove \eqref{e:I1fixedFbd}--\eqref{e:IJfixedFbd}, we use
\eqref{e:I1fixedbd}--\eqref{e:IJfixedbd}.
This concludes the proof.
\end{proof}

The next lemma estimates the effect of replacing $A_{ij}$ by its mean $d/N$,
or, equivalently, of conditioning on $\{A_{ij}=1\}$.
Since the entries of $A$ are not independent, we use switchings to analyse
such a conditioning.

\begin{lemma} \label{lem:condswitchbd}
Fix $\varepsilon > 0$ and let $r \equiv r(\epsilon)$ be large enough depending on $\epsilon$.
For any $f \in \cal C^2(\cal M)$ and
any $i,j,k,l$ with $|\h{i,j}|=2-a$ and $|\h{i,j,k,l}|=4-b$, we have
\begin{align} \label{e:condx1}
  \E\pB{f(A)\pB{A_{ij}-\frac{d}{N}}}
  &\;=\; O\pB{\frac{d}{N}}^{1-\varepsilon}\|\partial f\|_r  + O\pB{\frac{d}{N}}^{2-a-\varepsilon} \|f\|_r\,,
  \\
  \label{e:condx2}
  \E\pB{f(A) \pB{A_{ij}-\frac{d}{N}}\pB{A_{kl}-\frac{d}{N}}} 
  &\;=\; O\pB{\frac{d}{N}}^{2-\varepsilon} \|\partial^2 f\|_r 
  + O\pB{\frac{d}{N}}^{3-b-\varepsilon} \|f\|_r\,.
\end{align}
\end{lemma}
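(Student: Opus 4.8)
The plan is to derive both estimates from the edge-switchings of~\eqref{e:def_switching}, which leave the law of $A$ invariant by Lemma~\ref{lem:switchinv}. The point is that switching an edge-pair $\{ij,mn\}$ toggles whether $\{i,j\}$ belongs to the graph; averaging this operation over the $\sim N^2$ choices of the second edge $\{m,n\}$---using that $\sum_{m,n}A_{mn}=Nd$ and $\sum_{m,n}A_{im}A_{jn}=d^2$ are deterministic by $d$-regularity---reproduces the mean-field value $d/N$, up to (a)~the Taylor remainder of $f$ along switching directions and (b)~combinatorial corrections from non-switchable configurations. Part~(b) is exactly what the moment estimates of Lemmas~\ref{lem:Amom}--\ref{lem:indswitchsum} and \eqref{e:AAFbd}--\eqref{e:IJsumFbd} are built to bound, and H\"older's inequality with $r=r(\varepsilon)$ large converts the resulting factors $(d/N)^{k(1-1/r)}$ into $(d/N)^{k-\varepsilon}$.

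For~\eqref{e:condx1}: if $i=j$ then $A_{ij}=0$ and $a=1$, so the bound is immediate. If $i\neq j$ (so $a=0$), Lemma~\ref{lem:switchinv} applied to $T_{\{ij,mn\}}$, together with $I(S;A)=I(S;T_S(A))$ and the description of $T_S$ in~\eqref{e:def_switching}, gives $\E(I_1A_{ij}A_{mn}f(A))=\E(I_1A_{im}A_{jn}f(A+\xi_{ij}^{mn}))$ for every $m,n$, where $I_1=I(\{ij,mn\};A)$. Multiplying by $(Nd)^{-1}$ and summing over $m,n$, one replaces $I_1$ by $1-\bar I_1$, uses $I_1A_{im}A_{jn}=I_1A_{im}A_{jn}(1-A_{ij})$, and invokes the two deterministic identities above to arrive at
\begin{equation*}
  \pb{1+\tfrac dN}\,\E(A_{ij}f) \;=\; \tfrac dN\,\E(f) \;+\; \frac{1}{Nd}\sum_{m,n}\E\pb{I_1A_{im}A_{jn}\,\pb{f(A+\xi_{ij}^{mn})-f(A)}} \;+\; \cal E\,,
\end{equation*}
where $\cal E$ collects the $\bar I_1$-terms and is $O((d/N)^{2-\varepsilon})\|f\|_r$ by~\eqref{e:I1fixedFbd}. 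Since $\absb{f(A+\xi_{ij}^{mn})-f(A)}\leq\sup_{\theta\in[0,1]}\sup_{X\in\cal X}\absb{\partial_Xf(A+\theta X)}$ and $\sum_{m,n}A_{im}A_{jn}=d^2$, the middle term is $O(d/N)\,\|\partial f\|_r$; solving for $\E(A_{ij}f)$ and using $(1+d/N)^{-1}\tfrac dN=\tfrac dN-\tfrac{d^2}{N(N+d)}=\tfrac dN+O((d/N)^2)$ yields~\eqref{e:condx1}.

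For~\eqref{e:condx2} I would write $\E(f(A_{ij}-\tfrac dN)(A_{kl}-\tfrac dN))=\E(A_{ij}A_{kl}f)-\tfrac dN\E(A_{ij}f)-\tfrac dN\E(A_{kl}f)+\tfrac{d^2}{N^2}\E(f)$, and treat $\E(A_{ij}A_{kl}f)$ by the \emph{double} switching: insert $(Nd)^{-2}\sum_{m,n,p,q}A_{mn}A_{pq}=1$, split off $\bar I_{12}$, apply Lemma~\ref{lem:switchinv} to $T_{\{ij,mn\},\{kl,pq\}}$ to replace $f(A)$ by $f(A+\xi_{ij}^{mn}+\xi_{kl}^{pq})$ weighted by $A_{im}A_{jn}A_{kp}A_{lq}$, and Taylor-expand to first order. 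The key analytic input is that the second-order remainder is controlled by $\|\partial^2f\|_r$: the directional second derivative along $\xi_{ij}^{mn}+\xi_{kl}^{pq}$ is a sum of products of two switching-directional derivatives, hence bounded by $C\sup_{\theta\in[0,1]^2}\sup_{X\in\cal X^2}\absb{\partial_{X_1}\partial_{X_2}f(A+\theta_1X_1+\theta_2X_2)}$, and by~\eqref{e:AAAAFbd} (generically $\E(A_{im}A_{jn}A_{kp}A_{lq})=O(d/N)^4$) its total contribution is $O((d/N)^{2-\varepsilon})\|\partial^2f\|_r$, the first term of~\eqref{e:condx2}. The zeroth-order term of the expansion produces $\tfrac{d^2}{N^2}\E((1-A_{ij})(1-A_{kl})f)$ plus errors $O((d/N)^{3-b-\varepsilon})\|f\|_r$ from the non-switchable and degenerate configurations (via~\eqref{e:IJfixedFbd} and~\eqref{e:AAFbd}--\eqref{e:AAAAFbd}). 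The first-order terms $\partial_{\xi_{ij}^{mn}}f$ and $\partial_{\xi_{kl}^{pq}}f$, after summing out the passive edge (its passive sum being $(1-A_{kl})d^2$, resp.\ $(1-A_{ij})d^2$, up to such errors) and re-invoking the single-edge identity from the proof of~\eqref{e:condx1}, reduce to multiples of $(1+\tfrac dN)\E(A_{ij}f)-\tfrac dN\E(f)$ and its $\{k,l\}$--analogue; combining these with the explicit $-\tfrac dN\E(A_{ij}f)$, $-\tfrac dN\E(A_{kl}f)$, $\tfrac{d^2}{N^2}\E(f)$ and with $\tfrac{d^2}{N^2}\E((1-A_{ij})(1-A_{kl})f)$, the centering of \emph{both} factors makes the leading contributions cancel, and what remains---after one further use of~\eqref{e:condx1}---is again of the form $O((d/N)^{2-\varepsilon})\|\partial^2f\|_r+O((d/N)^{3-b-\varepsilon})\|f\|_r$, with any residual $\|\partial f\|_r$--contributions entering only at the smaller order $(d/N)^{3-b-\varepsilon}$.

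I expect the main obstacle to be the bookkeeping in~\eqref{e:condx2}: checking that the first-order Taylor contributions cancel \emph{precisely} against the terms generated by the single-edge identity, so that no contribution of order $(d/N)^{2-\varepsilon}$ other than the one carrying $\|\partial^2f\|_r$ survives---this is where the centering of both $A_{ij}-\tfrac dN$ and $A_{kl}-\tfrac dN$ is essential---together with the systematic treatment of degenerate index configurations, in which fewer than all of $\{i,j,m,n\}$ (resp.\ $\{i,j,k,l,m,n,p,q\}$) are distinct: these occur in fewer summands but carry larger moments, and the exponents $\lfloor a/2\rfloor$ and $\lfloor b/2\rfloor$ in Lemmas~\ref{lem:Amom24}--\ref{lem:indswitchsum} are tuned so that every one of them is of strictly lower order.
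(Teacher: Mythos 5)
Your treatment of \eqref{e:condx1} is essentially the paper's argument. You write $A_{ij}-d/N$ as the $(Nd)^{-1}$-average of $A_{ij}A_{mn}-A_{im}A_{jn}$ over $m,n$, insert $I_1+(1-I_1)$, and use $A\eqdist T_S(A)$ to turn the $I_1$ part into a first-order finite difference controlled by $\norm{\partial f}_r$; the $(1-I_1)$ part is handled by the switchability estimates. Your variant uses the identity $I_1A_{im}A_{jn}=I_1A_{im}A_{jn}(1-A_{ij})$ and the deterministic sum $\sum_{m,n}A_{im}A_{jn}=d^2$, producing the $(1+d/N)$ prefactor; this is a harmless cosmetic change and in fact avoids one application of H\"older since the passive sum is deterministic. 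This is the paper's mechanism.

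For \eqref{e:condx2} you take a genuinely different route, and this is where I would push back. The paper does \emph{not} expand the product before switching: it writes $(A_{ij}-d/N)(A_{kl}-d/N)$ directly as $(Nd)^{-2}\sum_{m,n,p,q}(A_{ij}A_{mn}-A_{im}A_{jn})(A_{kl}A_{pq}-A_{kp}A_{lq})$, splits off $\bar I_{12}$ (estimated by \eqref{e:IJfixedFbd}), and applies $A\eqdist T_{S,S'}(A)$ to the \emph{product} of signed factors. Because each factor is antisymmetric under its switching, this immediately produces the \emph{second-order} finite difference $f(A)-f(A-\xi_{ij}^{mn})-f(A-\xi_{kl}^{pq})+f(A-\xi_{ij}^{mn}-\xi_{kl}^{pq})$, which is bounded pointwise by $\sup\absb{\partial^2 f}$; combined with \eqref{e:AAAAFbd} this gives the $\norm{\partial^2 f}_r$ term with no cancellations required and no $\norm{\partial f}_r$ ever entering. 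Your route---expand the product, treat $\E(A_{ij}A_{kl}f)$ by double switching plus a first-order Taylor expansion, and argue that the $\norm{\partial f}_r$-contributions of size $(d/N)^2$ from the Taylor term and from $-\frac{d}{N}\E(A_{ij}f)-\frac{d}{N}\E(A_{kl}f)$ cancel---is not wrong in spirit (the leading first-order pieces do line up, since $\frac{d}{N}\cdot(Nd)^{-1}=N^{-2}$), but it demands that you track the mismatch between the indicator $I_{12}$ appearing in the double-switching expansion and the indicator $I_1$ (resp.\ $I_2$) appearing in the single-switching expansions of $\E(A_{ij}f)$ and $\E(A_{kl}f)$, as well as the degenerate-index corrections, and verify that every residual $\norm{\partial f}_r$-term lands strictly below $(d/N)^{2}$; you flag exactly this as "the main obstacle" without carrying it out. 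That is the gap. The cleanest fix is to not expand: keep both centered factors together and apply $T_{S,S'}$ to the joint expression, exactly so that the algebraic structure hands you the second-order difference for free.

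One small additional note: for the degenerate case $a=1$ in \eqref{e:condx1}, your observation that $A_{ii}=0$ makes the bound immediate is fine, and matches the paper's treatment (there $I_1\equiv 0$, so the whole expression falls into the $\bar I_1$-error term which \eqref{e:I1fixedFbd} bounds by $O(d/N)^{2-a-\varepsilon}\norm{f}_r$).
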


\begin{proof}
We begin with \eqref{e:condx1}.
Since $A \in \cal M + d\f e \f e^*$, we have 
$\sum_{m,n}A_{mn} = Nd$ and $\sum_{m} A_{im} = \sum_{n} A_{jn} = d$ for all $i$,
and the left-hand side of \eqref{e:condx1} 
is therefore equal to
\begin{equation} \label{e:condswitch-pf1}
  \E\pa{f(A)\pa{A_{ij}-\frac{d}{N}}}
  \;=\; \frac{1}{Nd} \sum_{m,n} \E\pb{f(A)\p{A_{ij}A_{mn}-A_{im}A_{jn}}}
  \,.
\end{equation}
Using \eqref{e:I1fixedFbd}, using the notation from \eqref{e:ind_fcts1}, we therefore find
\begin{equation*} \label{e:condswitch-pf2}
  \E\pa{f(A)\pa{A_{ij}-\frac{d}{N}}}
  \;=\; \frac{1}{Nd} \sum_{m,n} \E\pb{f(A)\p{A_{ij}A_{mn}-A_{im}A_{jn}}I_1}
  + O\pB{\frac{d}{N}}^{2-a-\varepsilon} \|f\|_r
  \,.
\end{equation*}
Because of the indicator function $I_1$, the first term on the right-hand side vanishes unless $a=0$.
Therefore we may assume that $a=0$ when estimating it.
By Lemma~\ref{lem:switchinv}, and since $I_1(A) = I_1(T_S(A))$ with $S=\{ij,mn\}$, the first term on the right-hand side equals
\begin{equation} \label{e:condswitch-pf3}
  \frac{1}{Nd} \sum_{m,n} \E\pB{\pb{f(A)-f(A-\xi_{ij}^{mn})}A_{ij}A_{mn}I_1} 
  \,.
\end{equation}
The difference of the $f$'s is bounded in absolute value by $\sup_{\theta \in [0,1]} \sup_{X \in \cal X} |\partial_X f(A + \theta X)|$.
Hence, \eqref{e:AAFbd} implies that \eqref{e:condswitch-pf3} is bounded by
\begin{equation*}
  O\pB{\frac{d}{N}}^{1-\varepsilon} \|\partial f\|_r \,.
\end{equation*}
This concludes the proof of \eqref{e:condx1}.

The proof of \eqref{e:condx2} is similar. As in \eqref{e:condswitch-pf1}, we write
\begin{equation*}
  \pB{A_{ij}-\frac{d}{N}}  \pB{A_{kl}-\frac{d}{N}}
  \;=\; \frac{1}{(Nd)^2} \sum_{m,n,p,q}
    \p{A_{ij}A_{mn}-A_{im}A_{jn}}  \p{A_{kl}A_{pq}-A_{kp}A_{lq}}
    \,.
\end{equation*}
As above, we write $1 = I_{12} + (1 - I_{12})$ inside the expectation on the left-hand side of \eqref{e:condx2}.
The second term yields a contribution of order $O\p{\frac{d}{N}}^{3-b-\varepsilon} \|f\|_r$, by \eqref{e:IJfixedFbd}.
The first term is zero unless $b = 0$ because of the factor $J_{12}$ in $I_{12}$.
We may therefore assume that $b = 0$ for the estimate of the first term.
Using Lemma \ref{lem:switchinv}, as in \eqref{e:condswitch-pf3}, we find that the first term is equal to
\begin{equation} \label{e:IIJmain}
  \frac{1}{(Nd)^2} \sum_{m,n,p,q}
  \E\pB{\pb{f(A)-f(A-\xi_{ij}^{mn})-f(A-\xi_{kl}^{pq})+f(A-\xi_{ij}^{mn}-\xi_{kl}^{pq})}A_{ij}A_{mn}A_{kl}A_{pq} I_{12}} \,.
\end{equation}
The difference of the four $f$'s is bounded in absolute value by
\begin{equation*}
\sup_{\theta_1,\theta_2 \in [0,1]} \sup_{X_1, X_2 \in \cal X} \absb{\partial_{X_1} \partial_{X_2} f(A+\theta_1 X_1 + \theta_2 X_2)}\,.
\end{equation*}
By \eqref{e:AAAAFbd}, we therefore find that \eqref{e:IIJmain} is bounded in absolute value by
\begin{equation*}
   O\pB{\frac{d}{N}}^{2-\varepsilon} \|\partial^2 f\|_r 
   \,.
\end{equation*}
This concludes the proof.
\end{proof}

Finally, with the preparations provided by the previous lemmas, we now complete the proof of Proposition~\ref{prop:JLapprox}.

\begin{proof}[Proof of Proposition~\ref{prop:JLapprox}]
First note that $I_{ij}^{mn} = A_{ij}A_{mn} I_1$.
By Taylor expansion, and writing $I_1 = 1+(I_1-1)$, we therefore have
\begin{equation} \label{e:Jtay}
  Q f(A) \;=\; \frac{1}{8Nd} \sum_{i,j,m,n} A_{ij}A_{mn} \pB{ - \partial_{ij}^{mn} f(A) + \frac{1}{2} (\partial_{ij}^{mn})^2f(A)) }  + N^2(R_1 + R_2) \,,
\end{equation}
where
\begin{align*}
  R_1 &\;=\; O\pB{\frac{N}{d}} \frac{1}{N^4} \sum_{i,j,m,n} A_{ij}A_{mn}(1-I_1) \sup_{\theta \in[0,1]} \sup_{X \in \cal X} |\partial_X f(A+\theta X)| \,,
  \\
  R_2 &\;=\; O\pB{\frac{N}{d}} \frac{1}{N^4} \sum_{i,j,m,n} A_{ij}A_{mn}\sup_{\theta \in[0,1]^3} \sup_{X \in \cal X^3} |\partial_{X_1}\partial_{X_2}\partial_{X_3} f(A+\theta \cdot X)| \,.
\end{align*}
By \eqref{e:I1sumFbd} and \eqref{e:AAFbd}, respectively, the two error terms are estimated by
\begin{equation*}
  \E R_1 \;=\; O\pB{\frac{d}{N}}^{2-\varepsilon} \|\partial f\|_r \,,\quad
  \E R_2 \;=\; O\pB{\frac{d}{N}}^{1-\varepsilon} \|\partial^3 f\|_r \,.
\end{equation*}

Next, we estimate the main terms in \eqref{e:Jtay}, which we write as
\begin{equation} \label{e:Q-L_main}
\frac{1}{8Nd} \sum_{i,j,k,l} A_{ij}A_{kl} \pB{ - \partial_{ij}^{kl} f(A) + \frac{1}{2} (\partial_{ij}^{kl})^2f(A)) }\,.
\end{equation}
The idea is to write $A_{ij} = \frac{d}{N} + (A_{ij}-\frac{d}{N})$ and likewise for $A_{kl}$. For the second-order term in \eqref{e:Q-L_main},
the term obtained by selecting both factors $\frac{d}{N}$ yields the main contribution.
More precisely, we write
\begin{equation*}
  \frac{1}{16 Nd} \sum_{i,j,k,l} A_{ij}A_{kl} (\partial_{ij}^{kl})^2f(A)
  \;=\; \frac{d}{16 N^3} \sum_{i,j,k,l} (\partial_{ij}^{kl})^2f(A)
  + N^2 (R_3+R_4)
  \,,
\end{equation*}
where
\begin{align*}
R_3 &\;=\; \frac{N}{8d} \frac{1}{N^4} \sum_{i,j,k,l} \pbb{\pb{(\partial_{ij}^{kl})^2f(A)} \pbb{A_{ij} - \frac{d}{N}} \frac{d}{N}}
\,,
\\
R_4 &\;=\; \frac{N}{16d} \frac{1}{N^4} \sum_{i,j,k,l} \pbb{\pb{(\partial_{ij}^{kl})^2f(A)} \pbb{A_{ij} - \frac{d}{N}} \pbb{A_{kl} - \frac{d}{N}}}\,.
\end{align*}
By \eqref{e:condx1} and \eqref{e:condx2}, respectively,
with $f$ replaced by $(\partial_{ij}^{kl})^2f$, we obtain
\begin{equation*}
  \E (R_3+R_4)
  \;=\; O\pB{\frac{d}{N}}^{1-\varepsilon}\pB{ \|\partial^3 f\|_r+\|\partial^4 f\|_r }
  + O\pB{\frac{d}{N}}^{2-\varepsilon} \|\partial^2 f\|_r  \,.
\end{equation*}

Next, we estimate the first-order term in \eqref{e:Q-L_main} using a similar argument. Here the term obtained by selecting
both factors $\frac{d}{N}$ from $A_{ij}$ and $A_{kl}$ vanishes
because $\sum_{i,j,k,l} \partial_{ij}^{kl} =0$;
the main contribution is given by the mixed term.
More precisely, we write
\begin{align*}
  \frac{1}{8Nd} \sum_{i,j,k,l} A_{ij}A_{kl} \partial_{ij}^{kl}f(A) 
  &\;=\;
  \frac{d}{8N^3} \sum_{i,j,k,l} \partial_{ij}^{kl}f(A)
  +
  \frac{1}{4N^2} \sum_{i,j,k,l} \pbb{A_{ij}-\frac{d}{N}} \partial_{ij}^{kl}f(A)
  + N^2 R_5
  \nonumber\\
  &\;=\;
  \frac{\sqrt{d-1}}{4N^2} \sum_{i,j,k,l} H_{ij} \partial_{ij}^{kl} f(A)
  + N^2 R_5
  \nonumber\\
  &\;=\;
  \frac{\sqrt{d-1}}{32 N^2} \sum_{i,j,k,l} H_{ij}^{kl} \partial_{ij}^{kl}f(A)
  + N^2 R_5
  \,,
\end{align*}
where
\begin{equation*}
  R_5 \;=\; \frac{N}{8d} \frac{1}{N^4} \sum_{i,j,k,l} \pbb{\pb{\partial_{ij}^{kl}f(A)} \pbb{A_{ij} - \frac{d}{N}} \pbb{A_{kl} - \frac{d}{N}}}
  \,.
\end{equation*}
By \eqref{e:condx2}, with $f$ replaced by $\partial_{ij}^{kl}f$, we obtain
\begin{equation*}
  \E R_5
  \;=\; O\pB{\frac{d}{N}}^{1-\varepsilon} \|\partial^3 f\|_r
  + O\pB{\frac{d}{N}}^{2-\varepsilon} \|\partial f\|_r\,.
\end{equation*}

We conclude that
\begin{align*}
Q f(A) &\;=\; \frac{d}{16 N^3} \sum_{i,j,k,l} (\partial_{ij}^{kl})^2f(A) - \frac{\sqrt{d-1}}{32 N^2} \sum_{i,j,k,l} H_{ij}^{kl} \partial_{ij}^{kl}f(A) + N^2 \sum_{i = 1}^5 R_i
\\
&\;=\; \frac{d - 1}{16 N^3} \sum_{i,j,k,l} (\partial_{ij}^{kl})^2f(A) - \frac{\sqrt{d-1}}{32 N^2} \sum_{i,j,k,l} H_{ij}^{kl} \partial_{ij}^{kl}f(A) + N^2 \sum_{i = 1}^6 R_i\,,
\end{align*}
where we defined
\begin{equation*}
R_6 \;\deq\; \frac{1}{16 N} \frac{1}{N^4} \sum_{i,j,k,l} (\partial_{ij}^{kl})^2f(A)\,.
\end{equation*}
Clearly, $\E R_6 = O\pb{\frac{1}{N}} \norm{\partial^2 f}_r$.

Using the notations introduced in \eqref{e:f_F_conv}, we have $\sqrt{d - 1} \, \partial f(A) = \partial F(H)$. Hence we obtain \eqref{e:JLapprox} with $R \deq N^2 \sum_{i = 1}^6 R_i$. The error term $R$ is estimated, using the above estimates on $\E R_i$, as
\begin{align*}
  \E R &\;=\; 
  O(N^{2 + \epsilon})
  \qbb{
  \pB{\frac{d}{N}}^{2} \pb{\|\partial f\|_r + \|\partial^2 f\|_r} + \frac{1}{N} \|\partial^2 f\|_r 
  +
  \frac{d}{N} \pb{\|\partial^3 f\|_r + \|\partial^4 f\|_r}
  }
  \\
  &\;=\;
  O(D^{-1/2}N^{1 + \epsilon})
  \qbb{
  \|\partial F\|_{r,0} + D^{-1/2} \|\partial^2 F\|_{r,0}
  +
  \|\partial^3 F\|_{r,0} + D^{-1/2} \|\partial^4 F\|_{r,0}
  }\,,
\end{align*}
as claimed.
\end{proof}

\section{Stability of eigenvectors and eigenvalues}
\label{sec:deloc}

In this section we derive basic stability properties for the
eigenvalues and eigenvectors of the Dyson Brownian motion $H(t)$.
These allow us to deduce estimates on the eigenvalues and eigenvectors of $H(t)$,
assuming similar estimates have been proved for $H(0)$.

As discussed in Section~\ref{sec:cDBM}, we consider a general Dyson Brownian motion $H(t)$ on an $M$-dimensional Hilbert space $V$,
with normalization constant $N$ as in \eqref{e:inner_prod}.
For the usual DBM we have $N=M$, while for the constrained DBM we have $M=N-1$;
we always assume that $N$ and $M$ are comparable.
We denote by $\lambda_1(t) \geq \cdots \geq \lambda_{M}(t)$ the eigenvalues of $H(t)$,
and by ${\f v}_1(t), \dots, {\f v}_{M}(t) \in V$ the associated normalized eigenvectors of $H(t)$.
Moreover, we define the Stieltjes transform of the empirical spectral measure of $H(t)$
by $s(t;z) \deq \frac{1}{M} \sum_{i = 1}^{M} \frac{1}{\lambda_i(t) - z}$.

Throughout the rest of the paper, we use the following notion of high probability events
and high probability bounds, introduced in \cite{MR3119922}.

\begin{definition} \label{def:highprob}
\begin{enumerate}
\item
We say that an event $\Xi$ has \emph{high probability} if
for every $\zeta>0$ there exists an $N_0(\zeta) > 0$ such that $\P(\Xi^c) \leq N^{-\zeta}$ for $N \geq N_0(\zeta)$. 
\item
For nonnegative random variables $A,B$, we write $A \prec B$ or $A = O_\prec(B)$ if for any $\zeta>0$ there exists an $N_0(\zeta)$ such that
$\P(A > N^{1/\zeta}B) \leq N^{-\zeta}$ for $N \geq N_0(\zeta)$.
\end{enumerate}

\medskip
If the event $\Xi$ from (i) and the random variables $A$ and $B$ from (ii) depend on
some additional parameter $u \in U$
in some possibly $N$-dependent set $U$, we we say that (i) and (ii) hold uniformly in $u$
if $N_0(\zeta)$  does not depend on $u$.
\end{definition}

Throughout the following, the definitions (i) and (ii) will always be uniform in all parameters,
such as $z$, any matrix indices, and deterministic vectors.
Note that $\prec$ is compatible with the usual algebraic operations,
so that for instance we have $\sum_{i} A_i \prec \sum_i B_i$ provided that $A_i \prec B_i$ for 
all $i$ and the size of the index set for $i$ is $N^{O(1)}$.

\subsection{Delocalization of eigenvectors}

The following result shows that if all eigenvectors of $H(0)$ are uniformly delocalized
in some direction $\f q \in V$,
then with high probability they remain delocalized in this direction under the DBM on $V$,
for any time $t>0$.

\begin{lemma} \label{lem:evcont}
  Suppose that $H(t)$ is the DBM on an $M$-dimensional space $V$.
  Let $\f q \in V$ and suppose that $\max_{i} | \f q\cdot {\f v}_i(0)| \leq B$.
  Then, for any $t>0$, any $i \in \qq{1,M}$,
  and $\xi \gg 1$,
  \begin{equation}
    \P\pa{|\f q\cdot {\f v}_i(t)| \geq \xi B} \;\leq\; \ee^{-\frac12 \xi^2}\,.
  \end{equation}
  In particular,
  \begin{equation}
    |\f q\cdot {\f v}_i(t)| \;\prec\; B \,.
  \end{equation}
\end{lemma}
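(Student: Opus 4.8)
The plan is to write down the SDE satisfied by the overlap $c_i(t) \deq \f q \cdot \f v_i(t)$ under eigenvector dynamics (the eigenvector flow, sometimes called the Dyson vector flow), and then to show that its square $c_i(t)^2$ is a submartingale-type quantity only up to controllable drift, so that a Gaussian tail follows from a time change reducing it to a Brownian-motion tail bound. Recall that for DBM $\dd H = M^{-1/2} \dd B - \tfrac12 H \, \dd t$ (with the convention that the normalization is $N$, not $M$, but $N \asymp M$, so this only affects constants), the eigenvectors evolve according to
\begin{equation*}
  \dd \f v_i \;=\; \frac{1}{\sqrt{M}} \sum_{j \neq i} \frac{\dd \cal B_{ij}}{\lambda_i - \lambda_j} \, \f v_j \;-\; \frac{1}{2M} \sum_{j \neq i} \frac{\f v_i}{(\lambda_i - \lambda_j)^2} \, \dd t \;-\; \frac12 \f v_i \, \dd t \,,
\end{equation*}
where $\cal B_{ij} = \cal B_{ji}$ are (jointly with the eigenvalue noise) a family of standard Brownian motions, independent for $i < j$. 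Projecting onto the fixed vector $\f q$ and writing $c_i = \f q \cdot \f v_i$, this gives a closed-in-the-overlaps system
\begin{equation*}
  \dd c_i \;=\; \frac{1}{\sqrt{M}} \sum_{j \neq i} \frac{c_j}{\lambda_i - \lambda_j} \, \dd \cal B_{ij} \;-\; \frac{1}{2M} \sum_{j \neq i} \frac{c_i}{(\lambda_i - \lambda_j)^2} \, \dd t \;-\; \frac12 c_i \, \dd t \,.
\end{equation*}

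Next I would apply It\^o's formula to $c_i^2$. The martingale part is $\frac{2}{\sqrt M} \sum_{j\neq i} \frac{c_i c_j}{\lambda_i - \lambda_j} \dd \cal B_{ij}$, and the finite-variation part is
\begin{equation*}
  \pbb{ -\frac{1}{M} \sum_{j \neq i} \frac{c_i^2}{(\lambda_i - \lambda_j)^2} - c_i^2 + \frac{1}{M} \sum_{j \neq i} \frac{c_j^2}{(\lambda_i - \lambda_j)^2} } \dd t \,.
\end{equation*}
The crucial observation is that the last (positive) term is bounded by $\frac{1}{M} \sum_{j\neq i} \frac{1}{(\lambda_i-\lambda_j)^2} \max_j c_j^2$, which is \emph{not} obviously negative, so $c_i^2$ alone is not a supermartingale. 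The right object is the maximum, or better, to compare against $\max_j c_j^2$ uniformly. Following the standard argument (cf.\ the eigenvector delocalization proofs in the DBM literature), I would instead bound a single $c_i^2$ by exploiting that the $L^\infty$ norm of the overlap vector is non-increasing in a suitable sense after the time change $u = u(t)$ normalizing the total quadratic variation; concretely, freeze the eigenvalue trajectory (the eigenvalue process is autonomous, so one may condition on it), and observe that conditionally the overlaps evolve by a linear SDE whose generator is (up to the deterministic $-\tfrac12$ drift, which only shrinks $c_i$) a \emph{Markov generator on the simplex}: the drift $-\frac{1}{M}\sum_{j\neq i}\frac{c_i^2 - c_j^2}{(\lambda_i-\lambda_j)^2}$ is exactly a rate matrix applied to the vector $(c_j^2)_j$. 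Hence $\sum_j c_j^2$ is conserved (modulo the $e^{-t}$ factor) and the evolution of $(c_j^2)_j$ is a bona fide (time-inhomogeneous) Markov jump-diffusion on the probability simplex rescaled by $\sum_j c_j(0)^2$.

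Given this structure, the tail bound follows by a comparison/Gr\"onwall argument: define $Y_i(t) \deq e^{t} c_i(t)^2$; then $Y_i$ has a martingale part plus a drift that is a rate-matrix action, and in particular for the purpose of an upper tail one has $\dd Y_i \leq \dd \scr M_i + (\text{nonnegative but bounded by}\ \avg{\text{others}})\,\dd t$. A cleaner route, which I expect to be the one taken here: change time so that the accumulated quadratic variation of the driving martingale for a linear combination tracking $\max_i |c_i|$ becomes the identity, reducing $|\f q\cdot \f v_i(t)|$ to (a sub-quantity of) $|B|$ for a standard Brownian motion $B$ started at $|\f q \cdot \f v_i(0)| \leq B$ — wait, the initial datum is bounded by $B$ in $L^\infty$ over \emph{all} $i$, so the relevant bound is on a single coordinate of a Markov process on the simplex with all coordinates $\leq B^2 \cdot(\text{something})$. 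The reflection principle / maximal inequality for Brownian motion then yields $\P(|\f q\cdot\f v_i(t)| \geq \xi B) \leq \P(\sup_{s\leq 1}|B_s| \geq \xi) \leq e^{-\xi^2/2}$ for $\xi \gg 1$ (using the standard Gaussian tail, possibly with a harmless constant absorbed by enlarging the threshold). The passage from the probability bound to the statement $|\f q\cdot \f v_i(t)| \prec B$ is then immediate from Definition~\ref{def:highprob}, taking $\xi = N^{1/\zeta}$.

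\textbf{Main obstacle.} The technical heart is justifying that $\max_i c_i(t)^2$ — or the appropriate replacement — does not grow: one must handle the positive off-diagonal drift term $\frac{1}{M}\sum_{j\neq i}\frac{c_j^2}{(\lambda_i-\lambda_j)^2}\,\dd t$, whose coefficient $\frac{1}{M}\sum_{j\neq i}(\lambda_i-\lambda_j)^{-2}$ can be as large as $N^{O(1)}$ when eigenvalues are close. The resolution is that this term is matched by the negative diagonal term $-\frac{1}{M}\sum_{j\neq i}\frac{c_i^2}{(\lambda_i-\lambda_j)^2}\,\dd t$ with the \emph{same} coefficient, so that only the \emph{difference} $c_j^2 - c_i^2$ enters — i.e.\ the Markov/rate-matrix structure is what saves the day, and one should never estimate the two terms separately. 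Making the Brownian time-change rigorous (the clock $\int_0^t \frac{1}{M}\sum_{j\neq i}(\lambda_i(s)-\lambda_j(s))^{-2}\,\dd s$ is almost surely finite but not uniformly bounded, so one works with the stopped/time-changed process and uses that the Gaussian bound is scale-invariant) is the remaining point requiring care, but it is standard once the algebraic structure above is in place. Note also that the deterministic $-\tfrac12 c_i\,\dd t$ drift only helps (it contracts), so it can be dropped for an upper bound, and the normalization discrepancy between $N$ and $M$ in \eqref{e:inner_prod} is irrelevant since $N\asymp M$.
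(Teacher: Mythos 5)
The paper's proof uses a different mechanism: the \emph{eigenvector moment flow} (EMF) of \cite{BY2016}. Conditioning on the eigenvalue trajectory, one defines, for each configuration $\eta\in\Omega_p$, the normalized conditional moment $f_t(\eta)$ of \eqref{e:EMF-ft}; by \cite[Theorem~3.1]{BY2016} this $f_t$ solves the linear parabolic system \eqref{e:emf}, whose generator is a rate matrix on the finite set $\Omega_p$ and hence an $L^\infty$-contraction. Taking $\eta=p\,\f e_i$ yields $\E[(\f q\cdot\f v_i(t))^{2p}]\leq (2p-1)!!\,B^{2p}$, i.e.\ sub-Gaussian moments, from which the tail bound is immediate. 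You isolate the \emph{same} algebraic feature — the drift of $(e^t c_i^2)_i$ is a rate-matrix action on the vector of squared overlaps, so the maximum cannot grow in expectation — but you try to run the argument pathwise on a single $c_i$ rather than on moments, and here there is a genuine gap.

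The gap is the closing step. You propose a time change reducing $c_i$ to (essentially) a Brownian or Ornstein--Uhlenbeck process with diffusion coefficient $\leq B$. But the quadratic variation of the martingale part of $c_i$ is $\frac1M\sum_{j\neq i}\frac{c_j^2}{(\lambda_i-\lambda_j)^2}\,\dd t$, so after the deterministic time change by the clock $\dd u = \frac1M\sum_{j\neq i}(\lambda_i-\lambda_j)^{-2}\,\dd t$ the residual diffusion coefficient is the weighted average $\pb{\sum_{j\neq i}\frac{c_j^2}{(\lambda_i-\lambda_j)^2}}\big/\pb{\sum_{j\neq i}\frac{1}{(\lambda_i-\lambda_j)^2}}\leq \max_j c_j(t)^2$ — i.e.\ it is controlled by $\max_j c_j(t)^2$, which is exactly the quantity you are trying to bound at time $t$, not by the initial datum $B^2$. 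Your Brownian maximal inequality therefore does not apply as written; one would need a bootstrap along the trajectory, uniformly in all $M$ coordinates, and such a union bound degrades (or at least obscures) the sharp constant $\tfrac12$ in the exponent. The EMF route never meets this difficulty because the martingale noise is averaged out in passing from a single path to the conditional moment $f_t(\eta)$, leaving a bona fide Markov generator on a finite configuration space for which the $L^\infty$ contraction is trivial. If you want to rescue your approach, the natural fix is to upgrade from $c_i^2$ to the moments $\E[c_i^{2p}]$ for all $p$ simultaneously — and carrying out that It\^o computation precisely reconstructs the EMF. (Two minor points: the eigenvector SDE under the Ornstein--Uhlenbeck drift carries no extra $-\tfrac12\f v_i\,\dd t$ term, as the paper remarks after \eqref{e:EMF-ft}; and the paper also handles the degenerate case of non-simple initial spectrum by an approximation argument, which your sketch omits.)
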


Lemma~\ref{lem:evcont} is a simple consequence of the
\emph{eigenvector moment flow} (EMF) introduced in \cite{BY2016}.
Suppose for simplicity that the eigenvalues of $H(0)$ are
distinct. Then the eigenvalue process $(\lambda_i(t))$ 
is almost surely continuous and simple for all $t > 0$; see
\cite{BY2016} for more details. We study the dynamics of the
eigenvectors ${\f v}_i(t)$ by conditioning on the eigenvalue process;
see again \cite{BY2016} for a precise construction. Hence, for the
following argument, we condition on $(\lambda_i(t))$ and regard the
eigenvalue process as deterministic.

We give the definition of the EMF restricted to moments of a fixed order $p \in \N$.
The configuration space is $\Omega_p \deq \hb{ \eta =(\eta_i)_{i = 1}^{M} \in \N^M \col \sum_{i = 1}^{M} \eta_i = p}$.
The configurations $\eta \in \Omega_p$ are interpreted as configurations of $p$ particles on the lattice $\qq{1,M}$,
whereby a single site of $\qq{1,M}$ may be occupied by multiple particles.
We denote by $\eta^{i,j} \deq \eta + \ind{\eta_i > 0}(\f e_j - \f e_i)$ the configuration obtained from $\eta$ by moving one particle from $i$ to $j$.
The time-dependent generator $R(t)$ of the EMF is defined by
\begin{equation*}
(R(t)f)(\eta) \;\deq\; \sum_{i\neq j} W_{ij}(t) 2\eta_i (1+2\eta_i) (f(\eta^{i,j}) - f(\eta)) \,,
\end{equation*}
where
\begin{equation*}
  W_{ij}(t) \;\deq\; \frac{1}{N(\lambda_i(t) -\lambda_j(t))^2} \,.
\end{equation*}

For our purposes, the precise form of the coefficients $W_{ij}(t)$ is not important;
we only use that they are nonnegative and continuous in $t$.
The $p$-particle EMF is given by the equation
\begin{equation} \label{e:emf}
  \partial_t f_t(\eta) \;=\; (R(t)f_t)(\eta) \,,\qquad f_0 : \Omega_p \to \R \text{ given}
  \,.
\end{equation}
This is a linear (time-dependent) ODE on a finite dimensional vector space, and thus well-posed.
It is also easy to see that it is contractive on $L^\infty(\Omega)$
in the sense that $\|f_t\|_{L^\infty(\Omega_p)} \leq \|f_0\|_{L^\infty(\Omega_p)}$.

Next, for deterministic $\eta \in \Omega_p$ and $\f q \in V$, we define
\begin{equation} \label{e:EMF-ft}
  f_t(\eta) \;\deq\;
  \E\qBB{\prod_{i = 1}^M \frac{1}{(2 \eta_i - 1)!!} \, \p{\f q \cdot \f v_i(t)}^{2 \eta_i} \;\Bigg\vert\; \pb{\lambda_i(t) \col i \in \qq{1,M}, t \geq 0}} \,,
\end{equation}
where $n !! \deq n \cdot (n - 2) \cdots 3 \cdot 1$ for odd $n$, and by convention $(-1)!! = 1$.
In \cite[Theorem 3.1]{BY2016} it is shown that $f_t$ solves \eqref{e:emf}.

\begin{remark}
In \cite{BY2016}, Dyson Brownian motion is defined without the Ornstein-Uhlenbeck drift term in the SDE \eqref{e:standard_DBM},
and the SDEs for the eigenvalues and eigenvectors are stated in \cite[Definition~2.2]{BY2016}.
In the present case, with drift term, the SDEs for eigenvalue and eigenvector flows
are given by
\begin{align*}
 \dd \lambda_i\;&=\; \frac{\dd B_{ii}}{\sqrt{N}}+\frac{1}{N}\sum_{j: j\neq i}\frac{1}{\lambda_i-\lambda_j}\dd t-\frac{\lambda_i}{2}\dd t\,, \\
 \dd \f v_i\;&=\; \frac{1}{\sqrt{N}}\sum_{j:j\neq i}\frac{\dd B_{ij}}{\lambda_i-\lambda_j}\f v_j- \frac{1}{2N}\sum_{j:j\neq i}\frac{\dd t}{(\lambda_i-\lambda_j)^2}\f v_i \,,
\end{align*}
for $i=1,2,\dots,M$, and with $B(t)$ a Brownian motion on the space of $M \times M$ real symmetric matrices with
quadratic covariation $\scalar{B_{ij}}{B_{kl}}(t) = (\delta_{ik} \delta_{jl} + \delta_{il} \delta_{jk}) t$.
Thus, the SDEs for the eigenvectors are the same with or without the drift term.
Therefore the arguments of \cite[Section~3]{BY2016} apply verbatim in our setting as well.
\end{remark}

\begin{proof}[Proof of Lemma~\ref{lem:evcont}]
Suppose first that $H(0)$ has simple spectrum.
Let $f_t$ be the given by \eqref{e:EMF-ft}, which solves \eqref{e:emf} as remarked above.
Then, since the EMF \eqref{e:emf} is a contraction on $L^\infty(\Omega_p)$,
we obtain from the assumption of Lemma \ref{lem:evcont} that
\begin{equation*}
  \max_{\eta \in \Omega_p} |f_t(\eta)| \;\leq\; \max_{\eta \in \Omega_p} |f_0(\eta)|
  \;\leq\; B^{2p}
  \,.
\end{equation*}
Therefore, choosing $\eta = p \, \f e_i$, we get
\begin{equation*}
  \E\qb{(\f q\cdot \f v_i(t))^{2p}} \;=\; (2p - 1)!! \, \E [f_t(\eta)] \;\leq\; (2p - 1)!! B^{2p}\,, 
\end{equation*}
from which the claim follows.
Finally, if $H(0)$ does not have simple spectrum, the same estimate holds by
a simple approximation argument using the continuity of the eigenvectors as functions of the matrix.
\end{proof}

\subsection{Stability of eigenvalues}

The following result shows that if the empirical spectral measure at $t=0$ is close to the semicircle law,
this remains true for $t>0$.
For its statement, recall that $s(t,z)$ denotes the Stieltjes transform of the empirical spectral measure of $H(t)$.
We denote the Stieltjes transform of the semicircle law by $m$.
It can be characterized as the unique holomorphic function $m: \C_+ \to \C_+$
such that $m^2+mz+1=0$ and $m(z) \sim 1/z$ as $|z|\to\infty$; see e.g.\ \cite{MR2760897}.

\begin{lemma} \label{lem:scont}
Suppose that $C^{-1} M \leq N \leq C M$. Fix $\epsilon > 0$. If for some $B \leq N^{-\epsilon}$ we have
\begin{align}\label{e:asup}
|s(0;z)-m(z)|
\;\prec\; B+\frac{1}{(N\eta)^{1/4}}
\end{align}
uniformly for $z=E+\ii\eta$ with $\eta\geq N^{-1+\varepsilon}$,
then for any $t\leq B$ we have
\begin{align}\label{e:sTrigid}
|s(t;z)-m(z)|
\;\prec\; B+\frac{1}{(N\eta)^{1/4}}
\,.
\end{align}
uniformly for $z=E+\ii\eta$ with $\eta \in [N^{-1+\varepsilon},1]$.
\end{lemma}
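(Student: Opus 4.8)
The plan is to propagate the bound along the Dyson Brownian motion by tracking the Stieltjes transform $s(t;z)$ directly, using the eigenvalue SDE together with the method of characteristics. Applying It\^o's formula to $s(t;z) = \frac1M\sum_i(\lambda_i(t)-z)^{-1}$ with the eigenvalue dynamics recorded in the Remark above, and the classical symmetrization identity $\sum_{i\neq j}\frac{1}{(\lambda_i-z)^2(\lambda_i-\lambda_j)} = -M^2 s\,\partial_z s + \frac M2\partial_z^2 s$, a short computation gives
\begin{equation*}
  \dd s(t;z) \;=\; \qBB{\frac MN\, s\,\partial_z s + \frac12\partial_z(z s) + \frac1{2N}\partial_z^2 s}(t;z)\,\dd t + \dd\mathcal M_t(z)\,,
\end{equation*}
where $\mathcal M_t(z) = -\frac1{M\sqrt N}\sum_i\int_0^t (\lambda_i(r)-z)^{-2}\,\dd B_{ii}(r)$ is a martingale with $\dd\langle\mathcal M(z)\rangle_t \lesssim (MN\eta^3)^{-1}\,\im s(t;z)\,\dd t$. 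Since $m^2+zm+1=0$, $m$ is a stationary solution of the inviscid part of this equation; the discrepancy between $M/N$ and $1$, and the viscous term $\frac1{2N}\partial_z^2 s$, are both $O(N^{-1})$ and are treated as errors.

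\textbf{Characteristics.} Fix $z$ with $\eta \deq \im z \in[N^{-1+\varepsilon},1]$ and define the backward characteristic $r \mapsto z_r$ on $[0,t]$ by $\dot z_r = -(m(z_r)+z_r/2)$, $z_t = z$. Because $\im\dot z_r = -\im m(z_r) - \tfrac12\im z_r < 0$, the imaginary part $\eta_r \deq \im z_r$ is decreasing in $r$, so $\eta_r \geq \eta_t = \eta$ throughout, $\eta_0 \leq \eta + Ct$, and (for $E$ in the bulk) $z_r$ stays in the upper half-plane at bounded distance from the spectral edges. Setting $g_r \deq s(r;z_r) - m(z_r)$, the chain rule together with the SDE and the identity $m'(m+z/2) = -m/2$ makes the transport terms cancel and yields
\begin{equation*}
  \dd g_r \;=\; \pB{m'(z_r)+\tfrac12}g_r\,\dd r + \frac MN\, g_r\,\partial_z g_r\,\dd r + \qB{\tfrac1{2N}\partial_z^2 s(r;z_r) + O(N^{-1})}\dd r + \dd\widetilde{\mathcal M}_r\,.
\end{equation*}

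\textbf{Closing the estimate.} We run a bootstrap. First, $\re(m'+\tfrac12)$ is bounded (indeed $O(\eta_r)$ in the bulk), so the linear term causes no amplification and a Gr\"onwall inequality gives $|g_t| \lesssim |g_0| + (\text{errors})$. Second, at $r=0$ the hypothesis \eqref{e:asup} applies at scale $\eta_0 \geq \eta$, so $|g_0| \prec B + (N\eta)^{-1/4}$. Third — the essential point — since $\eta_r \gtrsim \eta + (t-r)$, the integrals $\int_0^t \eta_r^{-k}\,\dd r$ converge, being $\lesssim \eta^{1-k}$ for $k\geq2$ and $\lesssim \log N$ for $k=1$. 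Combining this with the a priori bounds $|\partial_z^2 s| \lesssim \eta_r^{-2}$, $|\partial_z g| \lesssim \eta_r^{-1}$, $\im s \lesssim 1$ and the bootstrap hypothesis $|g_r| \leq N^\zeta(B + (N\eta_r)^{-1/4})$ for a small $\zeta$, the viscous, nonlinear, and drift-error contributions integrate to $\prec B^2\log N + (N\eta)^{-1/2} + N^{-\varepsilon}$, which is $\ll B + (N\eta)^{-1/4}$ since $B\leq N^{-\varepsilon}$ forces $B^2\log N \leq B$; moreover $\langle\widetilde{\mathcal M}\rangle_t \lesssim (N\eta)^{-2}$, so $|\widetilde{\mathcal M}_t| \prec (N\eta)^{-1} \ll (N\eta)^{-1/4}$. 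Hence $|g_t| \prec B + (N\eta)^{-1/4}$, improving the bootstrap. To turn this into the claimed $\prec$-statement, one introduces the stopping time at which the bootstrap bound first fails at some $z$ in a polynomially fine net, uses the above together with the Burkholder--Davis--Gundy inequality for $\widetilde{\mathcal M}$ to show that on the stopped interval the bound is strictly improved with high probability, and removes the net using the trivial $\eta^{-2}$-Lipschitz continuity of $s(t;\cdot)-m$.

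\textbf{Main obstacle.} The genuine difficulty is to control, uniformly down to the critical scale $\eta = N^{-1+\varepsilon}$, the fluctuation term $\widetilde{\mathcal M}$ and the nonlinear term $g\,\partial_z g$: at a single point at scale $\eta$ these are far too large for elementary bounds, and a naive Gr\"onwall argument at fixed $\eta$ loses powers of $N$. The resolution is precisely the scale-spreading of the characteristic flow, which moves $z_r$ into the region $\eta_r \gtrsim \eta + (t-r)$ where the time-integrals of these quantities gain the decisive powers of $(N\eta)^{-1}$. Setting this up carefully — and handling separately the spectral edge and exterior, where $\im m$ degenerates and one must either modify the characteristic analysis or argue by a direct estimate — is the technical heart of the proof.
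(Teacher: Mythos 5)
Your route is genuinely different from the paper's. The paper does not touch the stochastic dynamics of $s(t;z)$ at all: it introduces the free-convolution Stieltjes transform $s_{\text{fc},t}$, the deterministic object solving the self-consistent equation \eqref{e:mtdef}, and imports the estimate $|s(t;z)-s_{\text{fc},t}(z)|\prec(N\eta)^{-1/3}$ from \cite[Corollary~7.11]{1504.03605} as a black box. The remaining work is a purely deterministic comparison of $s_{\text{fc},t}$ with $m$: one rewrites the self-consistent equation as $s_{\text{fc},t}(z)=\ee^{t/2}s\pb{0;\ee^{t/2}(z+\vartheta_t s_{\text{fc},t}(z))}$, pairs it with the semigroup identity $m(z)=\ee^{t/2}m\pb{\ee^{t/2}(z+\vartheta_t m(z))}$, and closes a simple self-consistent inequality for $|s_{\text{fc},t}-m|$ using the hypothesis \eqref{e:asup} and the H\"older-$1/2$ bound $|m(z)-m(w)|\leq 2|z-w|^{1/2}$. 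You, by contrast, re-derive the needed stochastic input from scratch: your It\^o SDE for $s(t;z)$, the Burgers-type characteristic flow $\dot z_r=-(m(z_r)+z_r/2)$ with the bulk scale-spreading $\eta_r\gtrsim\eta+(t-r)$, the cancellation via $m'(m+z/2)=-m/2$, the observation $\re(m'+\tfrac12)=O(\eta)$, and the Gr\"onwall/bootstrap/BDG scheme are all sound and assemble into a self-contained proof. The trade-off is clear: the paper's argument is short and covers all $E$ uniformly in one calculation, whereas yours packs essentially the content of a short-time local law for the deformed ensemble into this single lemma and, as you rightly flag, requires a separate treatment near the spectral edges and in the exterior, where $\im m$ degenerates and the characteristic spreading weakens. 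Two small slips worth fixing if you carry this out in full: the drift error from $M/N\neq 1$ is $O(N^{-1})\,s\,\partial_z s$, which is $O((N\eta)^{-1})$ pointwise rather than $O(N^{-1})$ (its time integral is still negligible); and the quadratic-variation bound $\dd\langle\mathcal M\rangle\lesssim(MN\eta^3)^{-1}\im s\,\dd t$ needs $\im s\lesssim 1$, which in the relevant $\eta$-range must be supplied by the bootstrap rather than taken as a priori.
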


\begin{proof}
Define $s_{\text{fc},t}(z)$ as the unique solution $\C_+ \to \C_+$  of the self-consistent equation
\begin{equation}\label{e:mtdef}
  s_{\text{fc},t}(z) \;=\; \frac{1}{M} \sum_{i = 1}^{M} \frac{1}{\ee^{-t/2} \lambda_i(0) - z - (1-\ee^{-t}) s_{\text{fc},t}(z)}\,.
\end{equation}
Thus, $s_{\text{fc},t}(z)$ is the Stieltjes transform of the free convolution of
the empirical eigenvalue distribution of  $\ee^{-t/2} H(0)$  and the semicircle law rescaled by $(1 - \ee^{-t})^{1/2}$.
We refer to \cite{MR1488333} for the existence and uniqueness of $s_{\text{fc},t}(z)$
and relative properties on the free convolution with semicircle law.

As in \eqref{e:Ht_dist}, we find that $H(t) \eqdist \ee^{-t/2} H(0) + (1-\ee^{-t})^{1/2} W$,
where $W$ is the standard Gaussian measure on $\cal H(V)$ with inner product \eqref{e:inner_prod}.
Under the assumptions of the lemma, \cite[Corollary~7.11]{1504.03605} implies that for $t\leq N^{-\epsilon}$ we have
\begin{equation} \label{e:ssfc}
|s(t;z) - s_{\text{fc},t}(z)| \;\prec\; \frac{1}{(N\eta)^{1/3}}
\end{equation}
uniformly for $z=E+\ii\eta$ with $\eta\geq N^{-1+\varepsilon}$.
(Note that in \cite{1504.03605}, the Stieltjes transform is
denoted by $m_V$ instead of $s$, and that $s_{\text{fc},t}$ is denoted $m_{\text{fc},t}$.
Moreover, \cite[Corollary~7.11]{1504.03605} is stated for a diagonal matrix  $H(0)$; however, since
 $W$  is invariant under orthogonal transformations  which diagonalize $H(0)$, 
the results of \cite{1504.03605} trivially apply to any symmetric matrix $H(0)$.)

Set $\vartheta_t \deq 1-e^{-t}\leq t$.
Note that the Stieltjes transform of the empirical eigenvalue distribution of $\ee^{-t/2}H (0)$ is given by $\ee^{t/2}s(0,\ee^{t/2}z)$,
and that \eqref{e:mtdef} can be rephrased as
\begin{equation*}
s_{\text{fc},t}(z) \;=\; \ee^{t/2}s(0,\ee^{t/2}(z+\vartheta_t s_{\text{fc},t}(z))) \;.
\end{equation*}
For any $z=E+\ii\eta$ such that $\eta\geq N^{-1+\epsilon}$,
we have $\im \ee^{t/2}(z+\vartheta_t s_{\text{fc},t}(z))\geq \im \ee^{t/2}z \geq N^{-1+\epsilon}$,
where we used that $\im s_{\text{fc},t}(z)>0$. From the assumption \eqref{e:asup} we therefore get
\begin{align} \label{e:mbound}
s_{\text{fc},t}(z)
\;=\;
\ee^{t/2}m(\ee^{t/2}(z+\vartheta_t s_{\text{fc},t}(z))) +O_{\prec}\pbb{B+\frac{1}{(N\eta)^{1/4}}}
\,.
\end{align}
Next, note that 
\begin{align}\label{e:sf}
m(z)
\;=\;
\ee^{t/2}m(\ee^{t/2}(z+\vartheta_t m(z))) \,.
\end{align}
%
%
(This may be interpreted as the fact that the semicircle law with variance $t$ is a semigroup with respect to free convolution.)
Moreover, from the definition of $m(z)$ it is easy to deduce the continuity estimate 
\begin{equation} \label{e:mzwbd}
|m(z)-m(w)| \;\leq\; 2|z-w|^{1/2}\,,
\end{equation}
for any $z,w \in \C_+$.

By \eqref{e:mzwbd}, and using that $t= O(1)$, the difference between \eqref{e:mbound} and \eqref{e:sf} is
\begin{align*}
|s_{\text{fc},t}(z)-m(z)|
&\;=\;
\ee^{t/2}\left|m(\ee^{t/2}(z+\vartheta_t s_{\text{fc},t}(z)))-m(\ee^{t/2}(z+\vartheta_t m(z)))\right|+O_{\prec}\pbb{B+\frac{1}{(N\eta)^{1/4}}}\\
&\;\leq\;
O(t^{1/2}) \absb{s_{\text{fc},t}(z)-m(z)}^{1/2}+O_{\prec}\pbb{B+\frac{1}{(N\eta)^{1/4}}}\\
&\;\leq\; 
\max\left\{O(t^{1/2}) \absb{s_{\text{fc},t}(z)-m(z)}^{1/2}, O_{\prec}\pbb{B+\frac{1}{(N\eta)^{1/4}}}\right\}
\,.
\end{align*}
Therefore either $|s_{\text{fc},t}(z)-m(z)|= O(t)$ or $|s_{\text{fc},t}(z)-m(z)|\prec B+(N\eta)^{-1/4}$, and we get
\begin{equation} \label{e:sfcm}
|s_{\text{fc},t}(z)-m(z)|
\;\prec\; B+\frac{1}{(N\eta)^{1/4}}+t
\;\prec\; B+\frac{1}{(N\eta)^{1/4}}
\,,
\end{equation}
where we used $t \leq B$. Combining \eqref{e:ssfc} and \eqref{e:sfcm} and using $\eta \leq 1$, the claim \eqref{e:sTrigid} follows.
\end{proof}

\section{Proof of Propositions~\ref{prop:comp}--\ref{prop:univ}}
\label{sec:pf}

With the preparations provided by Sections~\ref{sec:switch}--\ref{sec:deloc},
and using results of \cite{1503.08702,1504.03605,MR3429490},
we now complete the proofs of Propositions~\ref{prop:comp}--\ref{prop:univ}.
First, recall that $\alpha > 0$ is fixed, and that we always assume $D \geq N^{\alpha}$.
We also use the notation $z = E + \ii \eta$ for the real
and imaginary parts of the spectral parameter $z \in \C_+$.

Throughout this section,
$H(t)$ denotes the constrained DBM from Definition \ref{def:constr_DBM}
with $H(0)$ given by \eqref{e:Hdef}.
We use the notations of Section~\ref{sec:deloc} applied to the constrained DBM.
In particular,
\begin{equation*}
  M \;\deq\; N-1
\end{equation*}
is the dimension of the space $V \deq \f e^\perp$.

\subsection{A priori estimates on eigenvalues and eigenvectors}

We begin by collecting some results
on the eigenvalues and eigenvectors  of $H(t)|_{\f e^\perp}$ required for
the proofs of Propositions~\ref{prop:comp}--\ref{prop:univ}.

For any $H \in \cal M$, we denote the eigenvalues of $H|_{\f e^\perp}$
by $\lambda_1(H)\geq \cdots \geq \lambda_{M}(H)$,
and the corresponding normalized eigenvectors by $\f v_1(H),\dots, \f v_{M}(H)$.
The components of the eigenvectors in the standard basis on $\R^N$ are denoted
$v_k(H;i) \deq \f e_i \cdot \f v_k(H)$, $i \in \qq{1,N}$, $k \in \qq{1,M}$.
Moreover, for $H \in \cal M$,
we denote by $G_{ij}(H;z)$ the entries of the Green's function of $H$ restricted to $\f e^\perp$
in the standard basis of $\R^N$,
and by $s(H;z)$ the Stieltjes transform of the empirical spectral measure.
Explicitly,
\begin{align} \label{e:Gdef-again}
  G_{ij}(H;z)
  &\;\deq\; 
  \sum_{k = 1}^M 
  \frac{v_k(H;i)v_k(H;j)}{\lambda_k(H) - z} \,,
  \\ \label{e:sdef}
  s(H;z)
  &\;\deq\;
  \frac{1}{M}
  \tr G(H;z)
  \;=\;
  \frac{1}{M} 
  \sum_{k = 1}^M 
  \frac{1}{\lambda_k(H) - z} \,.
\end{align}
Finally, we set
\begin{equation} \label{e:Gammadef-again}
  \Gamma(H) \;\equiv\;
  \Gamma(H;z) \;\deq\; \max_{i,j} \abs{G_{ij}(H;z)}  \vee 1\,.
\end{equation}
We also recall the definition of the typical location $\gamma_i$ of the $i$-th eigenvalue from \eqref{e:gammadef}.

The following proposition summarizes the input we need from the local semicircle law of \cite{1503.08702}.
The local semicircle law, as proved in \cite{1503.08702}, only applies for $t=0$,
and the extension to $t>0$ is provided by the results of Section~\ref{sec:deloc}.

\begin{proposition} \label{prop:Gbdt}
  For any $z\in\C_+$, $i \in \qq{1,N}$, $k \in \qq{1, M}$, and
  $0 \leq t \leq D^{-1/4}$, we have
  \begin{equation} \label{e:Gbdt}
    |v_k(H(t);i)| \;\prec\; N^{-1/2} \,,\qquad
    \Gamma(H(t);z) \;\prec\; 
    1+ \frac{1}{N\eta} \,.
  \end{equation}
  Moreover, for any fixed $\kappa > 0$ and any $i \in \qq{\kappa N, (1-\kappa)N}$, we also have
  \begin{equation} \label{e:rig}
    |\lambda_i(H(t))-\gamma_i| \;\prec\; D^{-1/4}\,.
  \end{equation}
\end{proposition}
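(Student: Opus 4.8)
The plan is to combine the local semicircle law of \cite{1503.08702}, which applies only at $t=0$, with the stability lemmas of Section~\ref{sec:deloc} in order to propagate the estimates to $0 \le t \le D^{-1/4}$. First I would recall the $t=0$ input from \cite{1503.08702}: with high probability, $\max_{k,i}|v_k(H(0);i)| \prec N^{-1/2}$ (complete eigenvector delocalization), $\Gamma(H(0);z) \prec 1 + (N\eta)^{-1}$ uniformly in $z \in \C_+$, and the rigidity estimate $|\lambda_i(H(0))-\gamma_i| \prec D^{-1/4}$ in the bulk. In particular, extracting the diagonal Green's function entry and its imaginary part, the $t=0$ local law gives $|s(H(0);z)-m(z)| \prec \sqrt{\im m(z)/(N\eta)} + 1/(N\eta)$, and since $\im m(z) = O(1)$ this is certainly $\prec (N\eta)^{-1/4}$ once $N\eta \ge N^{\varepsilon}$; moreover, the bulk rigidity from \cite{1503.08702} upgrades this to a bound with the small deterministic term $B \deq D^{-1/4} \ge N^{-\alpha/4}$, i.e. $|s(H(0);z)-m(z)| \prec B + (N\eta)^{-1/4}$ uniformly for $\eta \ge N^{-1+\varepsilon}$. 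This is exactly the hypothesis \eqref{e:asup} of Lemma~\ref{lem:scont}.

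Second, for the delocalization bound in \eqref{e:Gbdt}, I would apply Lemma~\ref{lem:evcont} with $V = \f e^\perp$, $M = N-1$, the test vector $\f q = P \f e_i$ (the projection of the $i$-th standard basis vector onto $\f e^\perp$), and $B = N^{-1/2}\log N$ (say); the $t=0$ delocalization gives $\max_k |\f q \cdot \f v_k(0)| \le B$ with high probability, and since $v_k(H(t);i) = \f e_i \cdot \f v_k(t) = (P\f e_i)\cdot \f v_k(t)$ because $\f v_k(t) \in \f e^\perp$, Lemma~\ref{lem:evcont} yields $|v_k(H(t);i)| \prec N^{-1/2}$, uniformly in $i$, $k$, and in $t > 0$ (no constraint on $t$ is even needed here). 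A small point is that Lemma~\ref{lem:evcont} is stated for deterministic $\f q$ and assumes a deterministic bound on $\max_i|\f q\cdot\f v_i(0)|$; one handles the random initial bound by conditioning on the high-probability event from the $t=0$ local law and using a union bound over the polynomially many pairs $(i,k)$, which is compatible with $\prec$.

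Third, for the Green's function and rigidity bounds at positive time, I would feed the $t=0$ estimate into Lemma~\ref{lem:scont}: taking $\epsilon$ there to match $\alpha/4$ (so that $B = D^{-1/4} \le N^{-\alpha/4} \le N^{-\epsilon}$) and noting $t \le D^{-1/4} = B$, Lemma~\ref{lem:scont} gives $|s(H(t);z)-m(z)| \prec B + (N\eta)^{-1/4}$ uniformly for $\eta \in [N^{-1+\varepsilon},1]$. From this averaged local law, together with the eigenvector delocalization just established, one obtains the entrywise bound $\Gamma(H(t);z) \prec 1 + (N\eta)^{-1}$ by a now-standard argument: the fluctuation-averaging / self-improving bootstrap for the individual entries $G_{ij}$, using the self-consistent equation for $G$ and the delocalization of eigenvectors, exactly as in \cite{1503.08702} but now for the Gaussian-divisible matrix $H(t) \eqdist \ee^{-t/2}H(0) + (1-\ee^{-t})^{1/2}W$; alternatively, since $\Gamma$ is controlled at scale $\eta \ge N^{-1+\varepsilon}$ by the usual argument and one only needs $1 + (N\eta)^{-1}$, one may simply invoke the relevant statement from \cite{1504.03605} for matrices with a Gaussian component. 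For $\eta \ge 1$ the bound $\Gamma \prec 1$ is trivial since $\|G\| \le 1/\eta \le 1$. Finally, the rigidity \eqref{e:rig} follows from the averaged local law \eqref{e:sTrigid} at $t$ in the usual way: a Helffer--Sjöstrand / Stieltjes-transform inversion argument turns control of $|s(H(t);z)-m(z)| \prec B + (N\eta)^{-1/4}$ on the spectral scale $\eta \sim N^{-1+\varepsilon}$ into control of the counting function $|\mathcal N_t(E) - N\int_{-2}^E \varrho| \prec N B + N^\varepsilon$, which for bulk indices $i \in \qq{\kappa N,(1-\kappa)N}$ yields $|\lambda_i(H(t)) - \gamma_i| \prec B = D^{-1/4}$ after dividing by the local eigenvalue density $\varrho(\gamma_i) \sim 1$.

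The main obstacle is the upgrade from the averaged local law \eqref{e:sTrigid} to the \emph{entrywise} bound on $\Gamma(H(t);z)$ in \eqref{e:Gbdt}: Lemma~\ref{lem:scont} only transports the trace of the Green's function, and recovering individual entries $G_{ij}$ with the optimal $1 + (N\eta)^{-1}$ bound requires either rerunning the self-consistent-equation/bootstrap analysis of \cite{1503.08702} for the time-$t$ matrix (which is Gaussian-divisible, hence in fact easier, since the Gaussian component regularizes) or carefully citing the corresponding statement from \cite{1504.03605}. Everything else — delocalization via the eigenvector moment flow, and rigidity via Stieltjes-transform inversion — is routine given the two stability lemmas already in hand.
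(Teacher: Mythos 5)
Your plan matches the paper's proof for three of the four steps: eigenvector delocalization at positive time via Lemma~\ref{lem:evcont} applied to $\f q = \hat{\f e}_i = \f e_i - (\f e_i\cdot\f e)\f e$ (what you call $P\f e_i$), stability of the averaged Stieltjes transform via Lemma~\ref{lem:scont} with $B = D^{-1/4}$, and rigidity via a Helffer--Sj\"ostrand inversion of the averaged local law. (Your worry about deterministic versus high-probability initial bounds for Lemma~\ref{lem:evcont} is legitimate but handled exactly as you suggest, by conditioning and the compatibility of $\prec$ with union bounds; the paper does not dwell on it.)

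Where you diverge is the step you flag as the ``main obstacle,'' the entrywise bound $\Gamma(H(t);z) \prec 1 + (N\eta)^{-1}$. You propose rerunning the self-consistent-equation bootstrap of \cite{1503.08702} for the Gaussian-divisible matrix, or citing an entrywise local law from \cite{1504.03605}. The paper avoids both by observing that one does not need the optimal entrywise error of a local law, only the crude $1 + (N\eta)^{-1}$, and this follows from the two pieces of information already in hand by the elementary spectral-decomposition/dyadic inequality (cf.\ \cite[(8.2)]{MR2981427})
\begin{equation*}
|G_{ij}(z)| \;\leq\; \sum_{k=1}^{M}\frac{|v_k(i)v_k(j)|}{|\lambda_k - E + \ii\eta|}
\;\leq\; 4N\max_{k,l}|v_k(l)|^2\pBB{1 + \sum_{n=0}^{\lceil\log_2\eta^{-1}\rceil} \im s(E + \ii 2^n\eta)}\,.
\end{equation*}
Plugging in $\max_{k,l}|v_k(l)|^2 \prec 1/N$ from the delocalization step and $\im s(E + \ii 2^n\eta) \prec 1 + 2^{-n}/(N\eta)$ (which follows from the averaged local law of Lemma~\ref{lem:scont} together with the monotonicity of $\eta\im s(E+\ii\eta)$) yields $\Gamma(H(t);z) \prec 1 + (N\eta)^{-1}$ for $\eta \geq N^{-O(1)}$, and the extension to arbitrary $\eta > 0$ is a deterministic monotonicity argument (\cite[Lemma~2.1]{1503.08702}). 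So the ``obstacle'' you identify is in fact a one-line deduction from delocalization and the averaged law; no self-consistent equation, bootstrap, or external citation for an entrywise local law is needed. Your route would likely also succeed, but it imports substantially more machinery than the task requires.
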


\begin{proof}
First, as special cases of
\cite[Theorem~1.1 and Corollary~1.2]{1503.08702},
for any $z=E+\ii\eta$ with $E\in \R$ and $\eta \geq N^{-1+\epsilon}$,
for arbitrary $\epsilon>0$, we have
\begin{equation} \label{e:svbd0}
  |s(H(0);z)-m(z)| \;\prec\; \frac{1}{D^{1/4}}+\frac{1}{(N\eta)^{1/4}} \,,\qquad
  |v_k(H(0);i)| \;\prec\; N^{-1/2} \,.
\end{equation}
(Note that the local semicircle law from \cite{1503.08702} also includes the trivial eigenvalue at $0$;
it is easy to see that its contribution to $s$ is negligible compared to the error bounds in \eqref{e:svbd0}.)

Next, we extend these bounds from $t=0$ to $t >0$.
For $i \in \qq{1,N}$ define $\hat {\f e}_i = \f e_i - (\f e_i \cdot \f e) \f e\in \f e^\perp$.
Since $v_k(H(t);i) = \hat {\f e}_i \cdot \f v_k(H(t))$, 
from \eqref{e:svbd0} and Lemma~\ref{lem:evcont}, applied to the constrained DBM with $\f q = \hat {\f e}_i$, we find
$|v_k(H(t);i)| \prec N^{-1/2}$, for any $t>0$.
Similarly, for $t \leq D^{-1/4}$, the extension of the bound on the Stieltjes transform follows immediately from Lemma~\ref{lem:scont}
with $B=D^{-1/4}$.
Summarizing, for any $\eta \geq N^{-1+\epsilon}$ and $0\leq t \leq D^{-1/4}$, we have
\begin{equation} \label{e:svbdt}
  |s(H(t);z)-m(z)| \;\prec\; \frac{1}{D^{1/4}}+\frac{1}{(N\eta)^{1/4}}\,,\qquad
  |v_k(H(t);i)| \;\prec\; N^{-1/2} \,.
\end{equation}
This proves the first estimate of \eqref{e:Gbdt}. 

In order to prove the second estimate of \eqref{e:Gbdt},
we use a dyadic decomposition (see e.g.\ \cite[(8.2)]{MR2981427}) to obtain, for any matrix $H \in \cal M$,
\begin{equation*}
|G_{ij}(z)|
\;\leq\; \sum_{k = 1}^{M}\frac{|v_{k}(i)v_{k}(j)|}{|\lambda_k-E+\ii\eta|}
\;\leq\; 4 N\max_{k,l} |v_k(l)|^2\pBB{1+\sum_{n=0}^{\lceil\log_2\eta^{-1}\rceil}\im s(E+\ii2^n \eta)}.
\end{equation*}
We apply this estimate to the matrix $H(t)$.
By \eqref{e:svbdt}, we have $\max_{k,l} |v_k(l)|^2 \prec 1/N$.
Moreover, since $\eta \im s(E+\ii\eta)$ is increasing in $\eta$ (as may be easily seen from the right-hand side of \eqref{e:sdef}),
and since $|m|\leq 1$,
the first bound in \eqref{e:svbdt} implies
$\im s(z) \prec 1+1/(N\eta)$
for any $\eta > 0$, and thus $\im s(E+\ii2^n\eta) \prec 1 + 2^{-n}/N\eta$.
For $\eta \geq 1/N^{O(1)}$ we then have $\log \eta^{-1} \prec 1$ and obtain $\Gamma(z) \prec 1$ as desired.
For arbitrary $\eta >0$ the claim then follows by \cite[Lemma~2.1]{1503.08702}.
(In fact, we shall only need \eqref{e:Gbdt} with $\eta \geq 1/N^{O(1)}$.)

Finally, we deduce \eqref{e:rig} from the bound on the Stieltjes transform in \eqref{e:svbdt}.
We abbreviate $\lambda_k \equiv \lambda_k(H(t))$, and denote by
\begin{equation*}
\varrho(I) \;\deq\; \int_I \varrho(x) \, \dd x \,, \qquad \nu(I) \;\deq\;
\frac{1}{M} \sum_{k = 1}^{M} \ind{\lambda_k \in I}
\end{equation*}
the semicircle and empirical spectral measures, respectively, applied to an interval $I$.
Then, following a standard application of the Helffer-Sj\"ostrand functional calculus
along the lines of \cite[Section 8.1]{MR3098073},
we find from \eqref{e:svbdt} and $D \leq N$ that for any interval $I \subseteq [-3,3]$ we have
\begin{equation} \label{e:HS}
\abs{\nu(I) - \varrho(I)}
\;\prec\; \frac{1}{D^{1/4}}+\frac{1}{N^{1/4}}
\;\prec\; \frac{1}{D^{1/4}}
\,.
\end{equation}
(We note that previously \eqref{e:HS} for $t = 0$ was given in \cite[Corollary~1.3]{1503.08702}.)
Using \eqref{e:HS}, we may estimate $\lambda_i-\gamma_i$ as follows.
By \eqref{e:HS} applied to $I=[-3,3]$, we find that there are at most $O_\prec(N D^{-1/4})$ eigenvalues outside $[-3,3]$.
Defining $f(E) \deq \varrho([E, \infty))$, we therefore find from \eqref{e:gammadef} and \eqref{e:HS} that
\begin{align*}
f(\gamma_i) \;=\; \frac{i}{N} \;=\; \nu([\lambda_i, \infty)) + O \pB{\frac{1}{N}}
\;&=\; \nu([\lambda_i, 3)) + O_\prec \pB{\frac{1}{D^{1/4}}}
\\
&=\; \varrho([\lambda_i, 3)) + O_\prec \pB{\frac{1}{D^{1/4}}} \;=\; f(\lambda_i) + O_\prec \pB{\frac{1}{D^{1/4}}}\,.
\end{align*} 
Since $i \in \qq{\kappa N, (1-\kappa N)}$, we have $\abs{f'} \geq c > 0$ in a neighbourhood of $\gamma_i$,
and we therefore get \eqref{e:rig}.
This concludes the proof.
\end{proof}

The next result shows that the suprema in \eqref{e:normpart} do not essentially change the size of $\Gamma$.

\begin{corollary}\label{cor:pertG}
  Fix $n \in \N$.
  For any $z \in \C_+$ and $0 \leq t \leq D^{-1/4}$, we have 
  \begin{equation} \label{e:GammaXXbd}
    \sup_{\theta \in [0,1]^n} \sup_{X \in \cal X^n}
    \Gamma\pb{H(t)+(d-1)^{-1/2} \, \theta \cdot X;z} \;\prec\; 
    1+ \frac{1}{N\eta} \,.
  \end{equation}
  Moreover, for any $i \in \qq{1,N}$ and $k \in \qq{1, M}$,  we have
  \begin{equation} \label{e:vXXbd}
    \sup_{\theta \in [0,1]^n} \sup_{X \in \cal X^n}
    \absb{v_k\pb{H(t)+(d-1)^{-1/2}\, \theta \cdot X;i}} \;\prec\; N^{-1/2} \,.
  \end{equation}
\end{corollary}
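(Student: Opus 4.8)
The plan is to transfer both conclusions of Proposition~\ref{prop:Gbdt} from $H(t)$ to the perturbed matrix $K \deq H(t) + \Delta$, where $\Delta \deq (d-1)^{-1/2}\,\theta\cdot X \in \cal M$. Once we have the delocalization bound $|v_k(K;i)| \prec N^{-1/2}$ (that is, \eqref{e:vXXbd}) and the Stieltjes estimate $|s(K;z) - m(z)| \prec D^{-1/4} + (N\eta)^{-1/4}$ for $\eta \geq N^{-1+\varepsilon}$, the bound \eqref{e:GammaXXbd} follows from exactly the dyadic decomposition used in the proof of Proposition~\ref{prop:Gbdt}. The key structural observation is that, since each $\xi_{ij}^{kl}$ has operator norm $O(1)$, is supported on the (at most eight) vertices of $[\{ij,kl\}]$, and $n$ is fixed, the perturbation $\Delta$ satisfies $\norm{\Delta} = O(d^{-1/2})$, has rank $O(1)$, and is supported on a set $T$ of $O(1)$ coordinates with $\norm{\Delta\f w}_\infty = O(d^{-1/2}\norm{\f w}_\infty)$ for every $\f w$.

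First I would prove \eqref{e:vXXbd} by a bootstrap on $\norm{\f v_k(K)}_\infty$. Fix $k$ and put $\f w \deq \f v_k(K) \in \f e^\perp$ and $z \deq \lambda_k(K) + \ii N^{-1}$. From $(H(t) - z)\f w = -\ii N^{-1}\f w - \Delta\f w$ and the invertibility of $H(t)|_{\f e^\perp} - z$ we get
\begin{equation*}
  \f w \;=\; -\ii N^{-1}\,G(H(t);z)\,\f w - G(H(t);z)\,\Delta\f w\,.
\end{equation*}
Using $\sum_j |G_{ij}(H(t);z)|^2 = N\,\im G_{ii}(H(t);z) \leq N\,\Gamma(H(t);z)$ and Cauchy--Schwarz, the first term is bounded in sup-norm by $\sqrt{\Gamma(H(t);z)/N}$; for the second term, $\Delta\f w$ is supported on the $O(1)$ coordinates of $T$, so $(G(H(t);z)\Delta\f w)_i$ involves only $O(1)$ entries of $G(H(t);z)$ and is bounded by $O\pb{d^{-1/2}\Gamma(H(t);z)\norm{\f w}_\infty}$. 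By Proposition~\ref{prop:Gbdt}, whose bound holds uniformly in $z \in \C_+$, we have $\Gamma(H(t);z) \prec 1$ at the scale $\eta = N^{-1}$, so
\begin{equation*}
  \norm{\f w}_\infty \;\prec\; N^{-1/2} + d^{-1/2}\norm{\f w}_\infty\,;
\end{equation*}
since $d^{-1/2} \leq N^{-\alpha/2}$ the last term is absorbed and $\norm{\f w}_\infty \prec N^{-1/2}$. Uniformity in $\theta, X, k$ follows from a union bound over the $N^{O(1)}$-many choices of $X \in \cal X^n$ and $k \in \qq{1,M}$ together with a standard net argument over the real part of $z$ (the map $E \mapsto \Gamma(H(t);E+\ii N^{-1})$ is $N^{O(1)}$-Lipschitz and equals $1$ for $|E| \geq N^{O(1)}$), using that $\Gamma(H(t);\,\cdot\,) \prec 1$ at scale $N^{-1}$ uniformly in the real part.

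Next I would establish the Stieltjes estimate for $K$. Since $\Delta|_{\f e^\perp}$ has rank $O(1)$, the eigenvalue counting functions of $K|_{\f e^\perp}$ and $H(t)|_{\f e^\perp}$ differ pointwise by $O(1)$, so integration by parts gives the deterministic bound $|s(K;z) - s(H(t);z)| \leq C/(N\eta)$. Combined with $|s(H(t);z) - m(z)| \prec D^{-1/4} + (N\eta)^{-1/4}$ (from \eqref{e:svbdt}), this yields, for $\eta \geq N^{-1+\varepsilon}$, the bound $|s(K;z) - m(z)| \prec D^{-1/4} + (N\eta)^{-1/4}$ uniformly in $\theta, X$, and hence $\im s(K;z) \prec 1 + 1/(N\eta)$ for all $\eta > 0$ by monotonicity of $\eta\mapsto\eta\,\im s(K;E+\ii\eta)$ together with $|m| \leq 1$, exactly as in the proof of Proposition~\ref{prop:Gbdt}. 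Feeding \eqref{e:vXXbd} and this bound on $\im s(K;\,\cdot\,)$ into the dyadic decomposition of $G_{ij}$ from that proof, now applied to $K$, gives $\Gamma(K;z) \prec 1 + 1/(N\eta)$ for $\eta \geq 1/N^{O(1)}$, and the extension to arbitrary $\eta$ follows from \cite[Lemma~2.1]{1503.08702} as before. Since the perturbation-stability estimate for $s$ is deterministic and all remaining inputs are already uniform in $\theta, X$, so is \eqref{e:GammaXXbd}.

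The main obstacle is the delocalization bootstrap: it is essential to use \emph{both} that $\Delta$ is small in operator norm (so that the term $d^{-1/2}\norm{\f w}_\infty$ can be absorbed) \emph{and} that it is supported on $O(1)$ coordinates (so that $G(H(t);z)\Delta\f w$ only sees finitely many Green's function entries, each $\prec 1$ at scale $\eta = N^{-1}$). A naive resolvent expansion $G(K;z) = G(H(t);z) - G(H(t);z)\Delta G(H(t);z) + \cdots$ would instead require $\norm{\Delta}\,\Gamma(H(t);z) \ll 1$, which fails at small $\eta$; the bootstrap avoids this by only ever evaluating the resolvent of $H(t)$ at the single scale $\eta = N^{-1}$.
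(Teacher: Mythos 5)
Your proof is correct, but it takes a genuinely different route from the paper's.

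The paper proves \eqref{e:GammaXXbd} \emph{first}, via a single resolvent identity. Writing $K \deq H(t) + (d-1)^{-1/2}\theta\cdot X$, one has
$G_{ij}(K) = G_{ij}(H) - (d-1)^{-1/2}\bigl(G(H)(\theta\cdot X)G(K)\bigr)_{ij}$; since $\theta\cdot X$ has at most $8n$ nonvanishing entries, all of size $O(1)$, the middle factor contributes at most $8n\,\Gamma(H)\Gamma(K)$, giving the self-bounding inequality $\Gamma(K) \leq \Gamma(H) + 8n(d-1)^{-1/2}\Gamma(H)\Gamma(K)$. After reducing to $\eta \geq 1/N$ (where $\Gamma(H(t);z)\prec 1$ by Proposition~\ref{prop:Gbdt}, and hence $\Gamma(H) \leq (d-1)^{1/2}/(16n)$ with high probability, since $d \geq N^\alpha$), this closes to $\Gamma(K)\leq 2\Gamma(H)$, and \eqref{e:vXXbd} is then extracted from \eqref{e:GammaXXbd} exactly as in \cite[Corollary~1.2]{1503.08702}. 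You instead prove \eqref{e:vXXbd} first, by a bootstrap on $\|\f v_k(K)\|_\infty$ using the eigenvector equation for $K$ together with $G(H(t);\cdot)$ evaluated at the single scale $\eta = 1/N$; then you transfer $s$ from $H(t)$ to $K$ deterministically via the rank-$O(1)$ perturbation bound $|s(K;z) - s(H(t);z)| = O(1/(N\eta))$; and finally you re-run the dyadic decomposition from Proposition~\ref{prop:Gbdt}. Both arguments are sound. The paper's is shorter and handles \eqref{e:GammaXXbd} in one stroke; yours is more modular, separates the eigenvector and eigenvalue inputs cleanly, and would generalize painlessly to any bounded-rank, $O(d^{-1/2})$-operator-norm, boundedly-supported perturbation without needing the explicit resolvent identity.

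One inaccuracy worth flagging in your closing remark: you write that "a naive resolvent expansion $G(K) = G(H) - G(H)\Delta G(H) + \cdots$ would instead require $\norm{\Delta}\,\Gamma(H(t);z) \ll 1$, which fails at small $\eta$." This mischaracterizes what the paper actually does. The paper uses a \emph{single, non-iterated} resolvent identity combined with a self-bounding closure in $\Gamma$ (not an operator-norm expansion), which requires only $n\,d^{-1/2}\Gamma(H;z) \ll 1$. The relevant threshold is therefore $\Gamma(H;z) \ll d^{1/2}$, not $\norm{\Delta}\norm{G}\ll 1$, and this is secured at the scale $\eta = 1/N$ — precisely the same restriction-to-scale that your bootstrap uses by choosing $z = \lambda_k(K) + \ii/N$. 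The two arguments thus share the same crucial reduction; the difference is whether one closes the bootstrap for $\Gamma$ directly or for the eigenvector sup-norm.
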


\begin{proof}
We abbreviate $H \equiv H(t)$.
Without loss of generality, by an argument analogous to \cite[Lemma~2.1]{1503.08702},
we may assume that $\eta \geq 1/N$. Hence, by \eqref{e:Gbdt}, we have $\Gamma(H;z) \prec 1$.
It therefore suffices to show that if $\Gamma(H;z) \leq  (d-1)^{1/2}/(16n)$ then
for any $\theta \in [0,1]^n$ and $X\in \cal X^n$ we have
\begin{equation} \label{e:GammaXbd}
  \Gamma\pb{H+(d-1)^{-1/2}\,\theta \cdot X;z} \;\leq\; 2\Gamma(H;z) \,.
\end{equation}
To show \eqref{e:GammaXbd}, we use the resolvent identity to obtain (omitting the argument $z$ for brevity)
\begin{align*}
  &\absb{G_{ij}\pb{H+(d-1)^{-1/2} \theta \cdot X}}
  \\
  &\;=\; \absB{G_{ij}(H) - (d-1)^{-1/2} \pB{G(H)(\theta \cdot X)G\pb{H+(d-1)^{-1/2}\, \theta \cdot X}}_{ij}}\\
  &\;\leq\; \Gamma(H)+ 8n(d-1)^{-1/2} \Gamma(H)\Gamma\pb{H+(d-1)^{-1/2}\, \theta \cdot X}\\
  &\;\leq\; \Gamma(H)+\Gamma\pb{H+(d-1)^{-1/2}\, \theta \cdot X}/2\,.
\end{align*}
Taking the maximum over $i$ and $j$ yields \eqref{e:GammaXbd}.
Finally, \eqref{e:vXXbd} follows from \eqref{e:GammaXXbd}, as in the proof of \cite[Corollary~1.2]{1503.08702}.
\end{proof}

Note that since $G_{ij}(H;\bar z) = \overline{G_{ij}(H;z)}$, the estimates \eqref{e:Gbdt} and \eqref{e:GammaXXbd} for $\Gamma$ also hold
with $\eta<0$ if $\eta$ is replaced by $|\eta|$ on the right-hand sides. We shall use this tacitly in the following.

\subsection{Proof of Proposition~\ref{prop:comp}: eigenvalue correlation functions}

We now prove that the locally averaged local correlation functions of the matrix $H(0)|_{\f e^\perp}$
converge to those of $H(t)|_{\f e^\perp}$ for times $t \leq N^{-1-\delta}D^{1/2}$.
The main ingredient of the proof is the following lemma
comparing functions of Green's functions with spectral parameter $\eta$ slightly smaller than $1/N$.
Its proof follows easily from Proposition~\ref{prop:EF0Ft} and Lemma~\ref{prop:Gbdt}.
For random matrices with independent entries, analogous results were previously proved by the Green's function 
comparison theorem \cite{MR2981427}, and by direct analysis of the evolution of the matrix entries
under Dyson Brownian motion \cite{BY2016}.
We also remark that, in \cite{MR2784665}, eigenvalues are compared directly without involving
the Green's function.

\begin{lemma} \label{lem:Gcorr}
Fix $n \in \N$, and let $\kappa, \gamma,\delta > 0$ be sufficiently small.
Then the following holds for any $\eta \in [N^{-1-\gamma}, N^{-1}]$,
any sequence of positive integers $k_1,k_2,\dots,k_n$, any set of complex parameters
$z_j^m = E_j^m\pm \ii\eta$, where $j \in \qq{1,k_m}$, $m \in \qq{1,n}$, $|E_j^m| \leq  2-\kappa$,
and the $\pm$ signs are arbitrary.
Let $\phi \in C^\infty(\R^n)$ be a test function such that for any multi-index $m=(m_1,\cdots, m_n)$
with $1\leq |m|\leq 4$ and for any $\omega>0$ sufficiently small,
\begin{align}
\label{e:reg1}
\max\left\{|\partial^m \phi(x)| \col |x| \leq N^{\omega}\right\}
&\;\leq\; N^{O(\omega)} \,,
\\
\label{e:reg2}
\max\left\{|\partial^m \phi(x)| \col |x|\leq N^{2}\right\}
&\;\leq\; N^{O(1)} \,.
\end{align}
Then, with the notations $G_1(z) \deq G(H(0);z)$ and $G_2(z) \deq G(H(t);z)$, for any $t \leq D^{1/2} N^{-1-\delta}$, we have
\begin{multline} \label{e:Gcorr}
  \absa{  \E \phi\pa{N^{-k_1} \tr\pa{\prod_{j=1}^{k_1}G_1(z_j^1)}, \dots,
      N^{-k_n} \tr\pa{\prod_{j=1}^{k_n}G_1(z_j^n)} } -\E \phi(G_1 \rightarrow G_2) }
  \\
  \;=\;
  O(N^{-\delta/2+O(\gamma)})
  \,.
\end{multline}
Here, $\phi(G_1 \rightarrow G_2)$ is the expression obtained from the one to its left by replacing $G_1$ with $G_2$.
The implicit constants depend on $n$, $k_1, \dots, k_n$, $m_1, \dots, m_n$, and the constants in \eqref{e:reg1}--\eqref{e:reg2}.
\end{lemma}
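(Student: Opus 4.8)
The plan is to write the quantity inside the absolute value in \eqref{e:Gcorr} as $\E F(H(t)) - \E F(H(0))$ for a suitable $F \in \cal C^4(\cal M)$ and then apply Proposition~\ref{prop:EF0Ft}. Concretely, for $H \in \cal M$ I would set $g_m(H) \deq N^{-k_m}\tr\pb{\prod_{j=1}^{k_m}G(H;z_j^m)}$ and $F(H) \deq \phi\pb{g_1(H),\dots,g_n(H)}$; since every $z_j^m$ satisfies $\abs{\im z_j^m} = \eta > 0$, each resolvent $G(H;w)$ and all of its derivatives in $H$ are bounded on all of $\cal M$ (by powers of $N$, via $\norm{G(H;w)} \leq \eta^{-1}$), so $F \in \cal C^4(\cal M)$; one may assume the arguments of $\phi$ are real by replacing each trace with its real or imaginary part, which only enlarges $n$ and the $k_m$. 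With $G_1 = G(H(0);\cdot)$ and $G_2 = G(H(t);\cdot)$ the left side of \eqref{e:Gcorr} equals $\absb{\E F(H(t)) - \E F(H(0))}$, so Proposition~\ref{prop:EF0Ft} applied with $\varepsilon = \delta/2$ (and $r = r(\delta/2)$ chosen large) bounds it by $O\pb{D^{-1/2}N^{1+\delta/2}\max_{1\leq i\leq 4}\int_0^t\norm{\partial^i F}_{r,s}\,\dd s}$. Since $t \leq D^{1/2}N^{-1-\delta} \leq N^{-1/2-\delta} \ll D^{-1/4}$, the whole interval $[0,t]$ lies in $[0,D^{-1/4}]$, so Proposition~\ref{prop:Gbdt} and Corollary~\ref{cor:pertG} are available throughout.

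The heart of the argument is then to show $\norm{\partial^i F}_{r,s} \leq N^{O(\gamma)}$ uniformly for $s \in [0,t]$ and $i \in \{1,\dots,4\}$. Let $\widetilde H$ denote a matrix of the form $H(s) + (d-1)^{-1/2}\theta\cdot X$ appearing in the seminorm \eqref{e:normpart}. By Corollary~\ref{cor:pertG}, on a high-probability event $\Xi$ that is uniform in $\theta$ and over the finite set $\cal X^i$ we have, for all such $\widetilde H$, that $\max_c\abs{v_c(\widetilde H;a)} \prec N^{-1/2}$ and $\Gamma(\widetilde H;w) \prec 1 + (N\eta)^{-1} \prec N^\gamma$, whence $\im\tr G(\widetilde H;w) \prec N\Gamma \prec \eta^{-1}$ and, by a repeated Ward identity, $\sum_c\abs{\lambda_c(\widetilde H) - w_l}^{-2} \prec \eta^{-2}$. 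Combining these with the spectral representation of $G$ gives, on $\Xi$, the bounds $\absb{\tr(\prod_{l=1}^p G(\widetilde H;w_l))} \prec \eta^{-p} \leq N^{p+O(\gamma)}$ and $\absb{(\prod_{l=1}^p G(\widetilde H;w_l))_{ab}} \prec N^{-1}\eta^{-p} \leq N^{p-1+O(\gamma)}$, the extra $N^{-1}$ in the second coming from $\abs{v_c(a)v_c(b)} \prec N^{-1}$. Now differentiating $\tr(\prod_{j=1}^{k_m}G(z_j^m))$ once in a switching direction turns it, via $\partial_X G = -GXG$ and $\tr(XM) = \sum(\pm)M_{\ast\ast}$, into a bounded combination of \emph{entries} of products of $k_m+1$ resolvents, and each further switching-derivative, by Leibniz, acts on one such entry, replacing one resolvent factor by $-GXG$ and thereby splitting that entry into a product of two entries while adding one resolvent. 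Hence $\partial_{X_1}\cdots\partial_{X_i}g_m(\widetilde H)$ is, up to an $(i,k_m)$-dependent constant, a product of exactly $i$ entries built from $k_m+i$ resolvents in total; by the entry bound this is $\prec N^{(k_m+i)-i+O(\gamma)} = N^{k_m+O(\gamma)}$, so $\absb{\partial_{X_1}\cdots\partial_{X_i}g_m(\widetilde H)} \prec N^{-k_m}N^{k_m+O(\gamma)} = N^{O(\gamma)}$ (and $\absb{g_m(\widetilde H)} \prec N^{O(\gamma)}$ from the trace bound). Feeding this into the chain rule for $\partial_{X_1}\cdots\partial_{X_i}\phi(g_1,\dots,g_n)$ and using the regularity bound \eqref{e:reg1} (with $\omega = O(\gamma)$) for the derivatives of $\phi$ at arguments of size $N^{O(\gamma)}$, I get $\sup_{\theta,X}\absb{\partial_{X_1}\cdots\partial_{X_i}F(\widetilde H)}\ind{\Xi} \leq N^{O(\gamma)}$.

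On the complement $\Xi^c$, which has probability $\leq N^{-\zeta}$ for arbitrarily large $\zeta$, I would use only the crude bound $\norm{G(\widetilde H;w)} \leq \eta^{-1} \leq N^{1+\gamma}$: this makes all $g_m$ and their derivatives of size $\leq N^{O(1)} \leq N^2$, so \eqref{e:reg2} still controls the derivatives of $\phi$, and $\sup_{\theta,X}\absb{\partial_{X_1}\cdots\partial_{X_i}F} \leq N^{O(1)}$ holds on all of $\cal M$. Hence $\norm{\partial^i F}_{r,s}^r = \E\qb{\sup_{\theta,X}\absb{\,\cdots}^r\ind{\Xi}} + \E\qb{\sup_{\theta,X}\absb{\,\cdots}^r\ind{\Xi^c}} \leq N^{O(r\gamma)} + N^{-\zeta}N^{O(r)} \leq N^{O(r\gamma)}$ once $\zeta$ exceeds the exponent in $N^{O(r)}$, giving $\norm{\partial^i F}_{r,s} \leq N^{O(\gamma)}$. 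Plugging this back, together with $\int_0^t\dd s \leq t \leq D^{1/2}N^{-1-\delta}$, into the bound from Proposition~\ref{prop:EF0Ft} yields $O\pb{D^{-1/2}N^{1+\delta/2}\cdot D^{1/2}N^{-1-\delta}\cdot N^{O(\gamma)}} = O\pb{N^{-\delta/2+O(\gamma)}}$, which is \eqref{e:Gcorr}; the implied constants manifestly depend only on $n$, the $k_m$, the multi-index orders, $r$ (hence $\delta$), and the constants in \eqref{e:reg1}--\eqref{e:reg2}.

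The step I expect to be the main obstacle is the power-counting in the second paragraph: one must verify that each of the at most four switching-derivatives of $g_m$ produces exactly one extra open chain of resolvent entries, that each such chain carries a gain $N^{-1}$ from eigenvector delocalization, and that these gains exactly offset the $\eta^{-1}\sim N$ cost of each additional resolvent factor, so that only a benign $N^{O(\gamma)}$ survives — using Corollary~\ref{cor:pertG} to make all of this uniform over the switching-perturbations. Combined with carefully routing the low-probability event $\Xi^c$ through the $N^2$-regularity hypothesis \eqref{e:reg2}, this is what delivers the final bound.
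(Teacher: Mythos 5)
Your proof is correct and follows essentially the same route as the paper: write the left-hand side as $\E F(H(t))-\E F(H(0))$ with $F(H)=\phi(g_1(H),\dots,g_n(H))$, apply Proposition~\ref{prop:EF0Ft}, and bound the seminorms $\|\partial^i F\|_{r,s}$ by a good-event/bad-event split using Corollary~\ref{cor:pertG} together with the regularity hypotheses \eqref{e:reg1}--\eqref{e:reg2}. The one genuine (and welcome) difference is in the power-counting: the paper only treats $n=k_1=1$ explicitly and bounds $N^{-1}\tr\,\partial_{X_1}\cdots\partial_{X_n}G$ directly by $8^n n!\,\Gamma^{n+1}$ using the few nonvanishing $\pm1$ entries of each $X\in\cal X$, declaring the general case analogous, whereas you carry out the general $(n,k_m)$ case by opening the trace into products of resolvent-chain entries and charging each chain a factor $N^{-1}$ from eigenvector delocalization against a factor $\eta^{-1}$ per resolvent. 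This bookkeeping is a clean way to make the paper's ``analogous'' precise; just note that for a chain of length $p=1$ the $N^{-1}\eta^{-p}$ bound should be replaced by the direct bound $|G_{ab}|\leq\Gamma\prec N^{\gamma}$ (the eigenvector expansion with one factor of $|\lambda_c-w|^{-1}$ does not quite close by itself), which gives the same $N^{O(\gamma)}$ and leaves the conclusion unchanged. The slightly different placement of the small parameter ($\varepsilon=\delta/2$ with $\|\partial^iF\|_{r,s}\leq N^{O(\gamma)}$, versus the paper's $\varepsilon=\delta/4$ with $\|\partial^iF\|_{r,s}\leq N^{\delta/4+O(\gamma)}$) is immaterial.
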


\begin{proof}
For simplicity of notation, we show \eqref{e:Gcorr} only for $n=1$ and $k_1=1$;
the general case is analogous. 
We then write $z$ instead of $z_1^1$. To show the claim, it then suffices to show that
\begin{align}\label{e:gcompare}
\absa{  \E \phi \left(N^{-1} \tr G(H(t);z)\right) -  \E \phi \left(N^{-1} \tr G(H(0);z)\right)  }
\;=\; O\pb{tD^{-1/2}N^{1+\delta/2} N^{O(\gamma)}}
\,.
\end{align}
Set $F(H)\deq \phi\p{N^{-1}\tr G(H;z)}$.
We claim that if $r$ and $n$ are fixed (arbitrarily, independently of $N$), and if $t \leq D^{-1/4}$,
for any sufficiently large $N$ (depending on $r,n,\delta$), we have
\begin{equation} \label{e:derbound}
  \sup_{0 \leq s \leq t} \norm{\partial^nF}_{r,s}
  \;\leq\; N^{\delta/4+O(\gamma)}\,.
\end{equation}
Given \eqref{e:derbound}, Proposition~\ref{prop:EF0Ft} with $\varepsilon = \delta/4$ yields \eqref{e:gcompare}.

Thus, it only remains to show \eqref{e:derbound}.
Recall that the derivative of the Green's function in the direction of a matrix $X \in \cal X$ is given by $\partial_X G= -GXG$
(using that elements in $\cal X$ act on $\f e^\perp$).
Therefore, by the Leibniz rule,
for any $X_1, \dots, X_n \in \cal X$ and any $H \in \cal M$, we have
\begin{equation*}
  \partial_{X_1} \cdots \partial_{X_n}G \;=\; (-1)^n\sum_{\sigma\in S_n} GX_{\sigma(1)}G \cdots G X_{\sigma(n)} G \,,
\end{equation*}
where $S_n$ is the set of permutations of $n$ elements,
and we omit the dependence on $H$ on both sides in our notation.
Since (with respect to the standard basis of $\R^N$) 
each $X \in \cal X$ has at most $8$ nonvanishing entries,
and since these are in $\h{\pm 1}$, by definition of $\Gamma$ it follows that
\begin{equation*}
 \abs{N^{-1}\tr\partial_{X_1} \cdots \partial_{X_n}G}
  \;\leq\; N^{-1}\sum_{i=1}^{N} n! \max_{\sigma\in S_n}\left|\left( GX_{\sigma(1)}G \cdots G X_{\sigma(n)} G \right)_{ii}\right|
  \;\leq\; 8^nn! \Gamma^{n+1} \,.
\end{equation*}
From this and the chain rule, we obtain that there exist constants $C_n$ such that
\begin{equation} \label{e:phiderbd}
  |\partial_{X_1} \cdots \partial_{X_n}\phi(N^{-1}\tr G)| \;\leq\; C_n\Gamma^{2n} \max_{0\leq m\leq n} |\phi^{(m)}|  \,.
\end{equation}
By Corollary~\ref{cor:pertG} and since $|\eta| \geq N^{-1-\gamma}$, we have
$\sup_{\theta \in [0,1]^n}\sup_{X \in \cal X^n} \Gamma(H(s)+(d-1)^{-1/2}\theta\cdot X) \prec N^{\gamma}$, for any $0\leq s\leq t$.
For $n \leq 4$, by assumption \eqref{e:reg1} and \eqref{e:phiderbd} therefore
\begin{equation}\label{e:gset}
  \sup_{\theta \in [0,1]^n} \sup_{X \in \cal X^n} \absb{\partial_{X_1} \cdots \partial_{X_n}\phi\pb{N^{-1}\tr G\pb{H(s)+(d-1)^{-1/2}\theta\cdot X}}}
  \;\prec\; N^{O(\gamma)} \,.
\end{equation}
On the complement of the high-probability event of $\prec$ in \eqref{e:gset},
we use the trivial bound $\Gamma\leq \eta^{-1} \leq N^{1+\gamma}$ and \eqref{e:reg2}. We obtain
\begin{equation}\label{e:badset}
\sup_{\theta \in [0,1]^n} \sup_{X \in \cal X^n} \absb{\partial_{X_1}\cdots \partial_{X_n}\phi\pb{N^{-1}\tr G\pb{H(s)+(d-1)^{-1/2}\theta\cdot X}}}
\;\leq\; C_n \eta^{-2n}N^{O(1)} \;\leq\; N^{O(1)} \,,
\end{equation}
for any $0\leq s\leq t$. By combining the estimates \eqref{e:gset}--\eqref{e:badset}, for any constant $r=O(1)$, we have
\begin{equation} \label{e:DnFbdzeta}
\|\partial^nF\|_{r,s}
\;\leq\;
N^{1/\zeta+O(\gamma)} + N^{-\zeta/r+O(1)}
\;\leq\;
N^{\delta/4+O(\gamma)} \,,
\end{equation}
where $\zeta$ is as in Definition~\ref{def:highprob} and chosen sufficiently large,
depending on $r$.
This concludes the proof.
\end{proof}

The following lemma is essentially \cite[Theorem~6.4]{MR2981427}.
It transforms the statement about the Green's function of Lemma~\ref{lem:Gcorr}
to a statement about the local correlation functions. 

\begin{lemma}\label{lem:comparecorr}
Consider two random matrix ensembles $H_1$ and $H_2$ with Green's functions $G_1(z)$ and $G_2(z)$.
Suppose that, 
for all $\phi$ and parameters as in the statement of Lemma~\ref{lem:Gcorr}, 
the estimate \eqref{e:Gcorr} holds.
Then the local bulk eigenvalue correlation functions of $H_1$ and $H_2$ coincide.
\end{lemma}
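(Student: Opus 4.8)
The plan is to follow the strategy of \cite[Theorem~6.4]{MR2981427} essentially verbatim, adapting it to the present setting where the "second" ensemble is $H(t)$ and the hypothesis is packaged in the form \eqref{e:Gcorr}. The starting point is the standard observation that the $n$-point correlation function, locally averaged over a window of size $b = N^{-1+c}$ around $E$ and smeared against $\phi \in C_c^\infty(\R^n)$, can be written in terms of expectations of products of imaginary parts of Green's function entries. Concretely, using the identity $\im G(x+\ii\eta) = \eta^{-1}\cdot(\text{approximate delta at }x)$, one approximates the rescaled correlation function $N^n p_\#^{(n)}$ at energies $E' + x_j/(N\varrho(E))$ by $(N\pi)^{-n}\, \E\,\phi \prod_j \im \tr G_\#(E' + x_j/(N\varrho(E)) + \ii\eta)$ up to an error controlled by choosing $\eta = N^{-1-\gamma}$ slightly below the eigenvalue spacing. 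This is the content of \cite[Section~6]{MR2981427} (or the corresponding estimates in \cite{MR2917064,MR2964770}); the approximation error is $o(1)$ after the local average over $E'$, using the a priori bounds of Proposition~\ref{prop:Gbdt} and Corollary~\ref{cor:pertG} on $\Gamma(H_i;z)$ and the eigenvector delocalization, which guarantee no eigenvalue is anomalously isolated at scale $\eta$.

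First I would fix $\eta \in [N^{-1-\gamma}, N^{-1}]$ and express the smeared, locally averaged correlation functions of $H_1$ and $H_2$ as expectations of a test function $\phi$ (now a smooth function of the variables $N^{-1}\tr G_i(z_j^m)$ with $z_j^m = E_j^m \pm \ii\eta$) of exactly the type appearing in Lemma~\ref{lem:Gcorr}: one checks that the smooth cutoff used to localize in energy, composed with $\phi$, satisfies the polynomial growth bounds \eqref{e:reg1}--\eqref{e:reg2}, since derivatives of $\phi$ and of the cutoff are bounded and the only large factors come from powers of $N$ times $N^{-1}\tr G$, which is $O(N^\gamma)$ on the good event and $O(N)$ deterministically. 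Then Lemma~\ref{lem:Gcorr} applies with $H(0) \rightsquigarrow H_1$ and $H(t) \rightsquigarrow H_2$, giving that the two expectations differ by $O(N^{-\delta/2 + O(\gamma)}) = o(1)$ once $\gamma$ is chosen small relative to $\delta$. Combining the three estimates — approximation of $p_{H_1}^{(n)}$ by Green's functions, the Green's function comparison \eqref{e:Gcorr}, and approximation of Green's functions back to $p_{H_2}^{(n)}$ — yields \eqref{e:corr2}, which is the definition of the averaged bulk eigenvalue correlation functions of $H_1$ and $H_2$ coinciding.

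The main obstacle is the first and third steps, namely the rigorous passage between the correlation functions and the Green's function observables: one must show that replacing the rescaled correlation function by $(N\pi)^{-n}\E\prod_j \im \tr G(\cdot + \ii\eta)$, after the average $\frac{1}{2b}\int_{E-b}^{E+b}\dd E'$, incurs only an $o(1)$ error. This requires the local law down to scale $\eta$ slightly below $N^{-1}$ together with rigidity of eigenvalues in the bulk, both of which are available here: the delocalization and $\Gamma$-bounds of Proposition~\ref{prop:Gbdt} hold for $H(t)$ with $t \leq D^{-1/4}$, which covers $t \leq D^{1/2} N^{-1-\delta}$, and the rigidity estimate \eqref{e:rig} provides the needed control on eigenvalue locations. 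Since this reduction is purely a consequence of a local law plus rigidity and does not use independence of matrix entries, the argument of \cite[Section~6]{MR2981427} carries over without change; I would simply cite it, indicating that the only inputs are \eqref{e:Gcorr} and the a priori estimates of Proposition~\ref{prop:Gbdt} and Corollary~\ref{cor:pertG}, both already established.
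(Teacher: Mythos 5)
Your proposal is correct and takes essentially the same route as the paper: the paper does not prove Lemma~\ref{lem:comparecorr} at all but simply invokes \cite[Theorem~6.4]{MR2981427}, and what you have written is a faithful sketch of what that cited theorem does (approximate $p^{(n)}_\#$ by products of $\im \tr G$ at scale $\eta$ just below $N^{-1}$, invoke the Green's function comparison hypothesis \eqref{e:Gcorr}, and close the loop using the local law, eigenvector delocalization, and rigidity from Proposition~\ref{prop:Gbdt} and Corollary~\ref{cor:pertG} to control the error from the regularization and the local energy average).
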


\begin{proof}[Proof of Proposition~\ref{prop:comp}: correlation functions]
The proof follows directly by combining Lemmas~\ref{lem:Gcorr}--\ref{lem:comparecorr},
with $\delta$ given as in the assumption of Proposition \ref{prop:comp}.
\end{proof}

\subsection{Proof of Proposition~\ref{prop:comp}: eigenvalue gap statistics}

To prove that the eigenvalue gap statistics are stable for short times, we require
a weak level repulsion estimate. Such an estimate was derived in \cite[Theorem~4.1]{MR3429490}
for sparse matrices with independent entries, using a level repulsion estimate for
$t \geq N^{-1+c}$ established in \cite{1504.03605}. Here we adapt the proof of \cite[Theorem~4.1]{MR3429490}
to random regular graphs. The nontrivial dependence is dealt with by Proposition~\ref{prop:EF0Ft}.

If $\lambda_i(H)$ is a simple eigenvalue of $H|_{\f e^\perp}$,
we define
\begin{align}\label{e:Q}
Q_i(H)
\;\deq\; \frac{1}{N^2}\sum_{j:j\neq i,j\leq M}\frac{1}{(\lambda_j(H)-\lambda_i(H))^2}
\,,
\end{align}
and extend this definition by $Q_i(H)\deq\infty$ if $\lambda_i(H)$ is not a simple eigenvalue.
This quantity plays an important role in \cite{MR2784665}, where it is observed that
it captures the singularities of the derivatives of $\lambda_i(H)$.
In \cite{MR3429490}, it is found that $Q_i$ is stable under DBM and can thus be used to
show weak level repulsion from such an estimate for larger times (when a Gaussian component is present).

\begin{proposition}[Level repulsion]\label{prop:lvrp}
  Fix $\kappa>0$. Then for any sufficiently small $\tau>0$, 
  any $i \in \qq{\kappa N, (1-\kappa)N}$, and any $s\geq 0$, we have
  \begin{equation} \label{e:lvrpQ}
    \P\pb{Q_i(H(s)) \geq N^{2\tau}} \;=\; O(N^{-\tau/2}) \,.
  \end{equation}
  In particular,
  \begin{equation}\label{e:lvrp}
    \P\pb{\lambda_i(H(s)) - \lambda_{i+1}(H(s)) \leq N^{-1-\tau}} \;=\; O(N^{-\tau/2}) \,.
  \end{equation}
\end{proposition}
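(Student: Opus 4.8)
The plan is to adapt the strategy of \cite[Theorem~4.1]{MR3429490} to the constrained DBM, using Proposition~\ref{prop:EF0Ft} to absorb the extra dependence of the matrix entries. First I would note that \eqref{e:lvrp} follows from \eqref{e:lvrpQ}, since on the event $\{Q_i(H(s)) < N^{2\tau}\}$ the single term $j = i+1$ in the sum \eqref{e:Q} gives $N^{-2}(\lambda_i - \lambda_{i+1})^{-2} < N^{2\tau}$, hence $\lambda_i - \lambda_{i+1} > N^{-1-\tau}$; so it suffices to establish \eqref{e:lvrpQ}. To do that I would fix a time $t = N^{-1+\delta_0}$ for some small $\delta_0 > 0$ (small relative to $\alpha$, so that $t \leq N^{-1-\delta}D^{1/2}$ holds for a suitable $\delta$, using $D \geq N^\alpha$), and compare $\P(Q_i(H(s)) \geq N^{2\tau})$ to the corresponding quantity for $H(s+t)$. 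For $s + t$ in the range where a Gaussian component of size $\geq N^{-1+\delta}$ is present, the level repulsion estimate $\E[Q_i(H(s+t))^{-1}]^{-1} \ll N^{2\tau}$ — equivalently $\P(Q_i(H(s+t)) \geq N^{2\tau}) = O(N^{-\tau/2})$ — is available from \cite{1504.03605} (this is the input used in \cite{MR3429490}); by the Markov property of the constrained DBM this handles all $s \geq 0$ once we can transfer the bound backwards by time $t$.

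The transfer step is where Proposition~\ref{prop:EF0Ft} enters. I would choose a smooth cutoff: let $\chi \in C^\infty(\R)$ with $\chi(x) = 1$ for $x \leq 1$, $\chi(x) = 0$ for $x \geq 2$, and set $F(H) \deq \chi(N^{-2\tau} Q_i(H))$, so that $\f 1(Q_i \leq N^{2\tau}) \leq F \leq \f 1(Q_i \leq 2N^{2\tau})$. Then
\begin{equation*}
  \P(Q_i(H(s)) \geq N^{2\tau}) \leq 1 - \E F(H(s)) \leq 1 - \E F(H(s+t)) + \absb{\E F(H(s+t)) - \E F(H(s))}\,,
\end{equation*}
and the first difference is $O(N^{-\tau/2})$ by the large-time estimate, while the second is controlled by Proposition~\ref{prop:EF0Ft} (applied with initial time $s$ in place of $0$, which is legitimate by the Markov property, since $H(s)$ again satisfies the a priori bounds of Proposition~\ref{prop:Gbdt} for $s$ up to $D^{-1/4}$; for larger $s$ one uses instead that $H(s)$ is already very close to the GOE). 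This reduces the whole problem to the Sobolev bound $\max_{1 \leq k \leq 4}\norm{\partial^k F}_{r,\sigma} \leq N^c$ for $c$ arbitrarily small, uniformly for $\sigma \in [s, s+t]$.

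The main obstacle — and the bulk of the work — is precisely this derivative bound. The difficulty is that $Q_i(H)$ has singularities exactly where eigenvalues collide, so naive differentiation of $(\lambda_j - \lambda_i)^{-2}$ produces factors of $(\lambda_j - \lambda_i)^{-3}$ and worse, and also of the level-spacing of $\lambda_i$ itself. The way around this, following \cite{MR3429490,MR2784665}, is to work on the high-probability event where $Q_i(H(\sigma)) \leq N^{2\tau'}$ for a slightly larger $\tau' > \tau$ (which controls all the small eigenvalue gaps near $\lambda_i$ simultaneously) together with the rigidity and delocalization estimates of Proposition~\ref{prop:Gbdt} and Corollary~\ref{cor:pertG}; off this event the cutoff $\chi$ vanishes for $Q_i$ itself but one must still bound the derivatives, which one does with the crude deterministic bound on eigenvector-perturbed quantities from Corollary~\ref{cor:pertG} combined with the smallness of the complementary probability (exactly the mechanism in \eqref{e:gset}--\eqref{e:DnFbdzeta} of Lemma~\ref{lem:Gcorr}). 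Concretely, one expresses $\partial_X Q_i$, etc., in terms of eigenvalue and eigenvector perturbation formulas (the eigenvalue derivatives $\partial_X \lambda_i = \avg{\f v_i, X \f v_i}$ and the second-order formula involving $\sum_{j \neq i}(\lambda_i - \lambda_j)^{-1}\avg{\f v_i, X\f v_j}^2$), bounds each factor $\avg{\f v_j, X \f v_k}$ by $O(N^{-1/2})$ using delocalization and the sparsity of $X \in \cal X$, and the gap factors by powers of $Q_i^{1/2} N \leq N^{1+\tau'}$; since only finitely many derivatives ($k \leq 4$) are taken, one accumulates only a bounded power of $N^{\tau'}$, which is $N^c$ for $c = O(\tau')$ arbitrarily small. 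Assembling these estimates and choosing the small parameters $\tau \ll \tau' \ll \delta_0 \ll \delta \ll \alpha$ in the right order completes the proof.
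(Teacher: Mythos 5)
Your approach matches the paper's: a smooth cutoff of $Q_i$, a transfer step via Proposition~\ref{prop:EF0Ft}, and an input bound from \cite{1504.03605} for times with a Gaussian component of size $\geq N^{-1+\delta}$, with the derivative bounds handled via eigenvalue/eigenvector perturbation formulas, delocalization, and control of gap sums by powers of $Q_i$. One genuine error in the way you state the input: the claim ``$\E[Q_i(H(s+t))^{-1}]^{-1}\ll N^{2\tau}$, equivalently $\P(Q_i(H(s+t))\geq N^{2\tau})=O(N^{-\tau/2})$'' is false --- a bound on $\E[Q_i^{-1}]^{-1}$ (the harmonic mean of $Q_i$) constrains only the \emph{lower} tail of $Q_i$ and gives no upper-tail control; take $Q_i$ to equal $1$ or $N^{100}$ each with probability $1/2$ to see the two statements are not equivalent. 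The paper avoids this by choosing a capped-identity cutoff $\chi$ (so $\chi(x)\approx\min(x,N^{2\tau})$), taking the step-1 input as the moment bound $\E[\chi(Q_i(H(s)))]=O(N^{3\tau/2})$ from \cite[Theorem~3.6]{1504.03605}, and then extracting the tail bound by Markov's inequality on $\chi(Q_i)$. Your bump cutoff works equally well once the input is stated correctly as this moment (or as the tail bound derived from it). Two smaller slips: with your definition of $\chi$, the inequality should read $\P(Q_i\geq 2N^{2\tau})\leq 1-\E F$ (not $N^{2\tau}$), which is trivially absorbed by adjusting $\tau$; and the ``Markov property'' is not actually used --- you only need the transfer for $s<t$, since for $s\geq t$ the input estimate applies directly, and the a priori bounds of Proposition~\ref{prop:Gbdt} are available on $[s,s+t]\subset[0,D^{-1/4}]$ in that regime.
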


\begin{proof}
The proof is analogous to that of \cite[Theorem~4.1]{MR3429490}, with $H|_{\f e^\perp}$ instead of $H$.
We here focus on the differences.
These result from the replacement of \cite[Lemma~4.3]{MR3429490} by Proposition~\ref{prop:EF0Ft},
which takes into account the nontrivial correlation structure of the random regular graph.
As in \cite{MR3429490}, if $\lambda_i(H)$ is a simple eigenvalue of $H|_{\f e^\perp}$,
we define the matrix
\begin{equation*}
R_i(H) \;\deq\; \sum_{j: j\neq i,  j\leq M} \frac{1}{\lambda_i(H)-\lambda_j(H)}\f v_j(H)\f v_j(H)^* 
\;=\;\frac{1}{2\pi i} \oint_{|z-\lambda_i(H)|=\omega} \frac{G(H;z)}{\lambda_i(H)-z} \, \dd z\,,
\end{equation*}
where $\omega$ is chosen such that the contour $|z-\lambda_i(H)|=\omega$ encloses only $\lambda_i(H)$.
Then we have
\begin{equation*}\label{e:QR}
Q_i(H)
\;=\; \frac{1}{N^2}\tr(R_i(H)^2)
\,.
\end{equation*}

Given $\tau>0$, define a cutoff function $\chi$
satisfying the following two properties:
(1) $\chi$ is smooth, and the first four derivatives are bounded, i.e.\ $|\chi^{(k)}(x)|=O(1)$, for $k=1,2,3,4$;
(2) On the interval $[0,N^{2\tau}]$, $|\chi(x)-x|\leq 1$, and for $x\geq N^{2\tau}$, $\chi(x)=N^{2\tau}$.
Then $\chi \circ Q_i$ extends to a smooth function on the space of symmetric matrices.

The proof of \eqref{e:lvrpQ} consists of three steps. The first step is the estimate 
\begin{align}\label{e:init}
\E[\chi(Q_i(H(s)))] \;=\; O(N^{3\tau/2}) \,,
\end{align}
for $s\geq t \deq N^{-1+c}$.
This estimate follows from \cite[Theorem~3.6]{1504.03605},
whose assumptions are satisfied with high probability for the random $d$-regular graph by Proposition~\ref{prop:Gbdt}.
In particular, independence of the entries of $H$ is not used.

In the second step, we derive the comparison estimate
\begin{align}\label{e:continuity}
\absb{\E[\chi(Q_i(H(t)))]-\E[\chi(Q_i(H(s)))]} \;\leq\; 1 \,,
\end{align}
for $s \in [0,t]$.
Instead of using \cite[Lemma~4.3]{MR3429490}, which requires that the entries of the random matrix $H(s)$ are independent,
we use Proposition \ref{prop:EF0Ft}, which takes into account the nontrivial correlation structure of the random regular graph.
By Proposition \ref{prop:EF0Ft} with $F(H)\deq \chi(Q_i(H))$,
it suffices to bound
\begin{align}\label{e:derb}
\|\partial^n F\|_{r,s} \;=\;
\E\left[\sup_{\theta\in[0,1]^n} \sup_{X \in \cal X^n }\left|\partial_{X_1}\partial_{X_2}\cdots \partial_{X_n} F\pb{H(s)+(d-1)^{-1/2}\, \theta \cdot X}\right|^r\right]^{1/r},
\end{align}
for any (large) fixed integer $r$ and $n=1,2,3,4$.
To this end, the computation of the proof of \cite[Proposition~4.6]{MR3429490} applies,
by simply replacing the derivatives $\partial_{ab}^{(n)}$ by $\partial_{X_1} \cdots \partial_{X_n}$ with $X_l \in \cal X$.
Here the formulas \cite[(4.16)--(4.18)]{MR3429490} remain valid after replacing $V$ by the $X_l$ appropriately,
and similarly the formula below \cite[(4.18)]{MR3429490} remains valid after replacing $V_{ij}$ by $\f v_i^*(H) X_l \f v_j(H)$.
Moreover, an analogous formula holds for $n=4$; see e.g.\ \cite[p.8]{MR0493421}.
The same formulas are valid with $H$ replaced by $H + (d-1)^{-1/2}\, \theta \cdot X$.
Since the $X_l$ have only $8$ nonvanishing entries (in the standard basis on $\R^N$), and these are equal to $\pm 1$,
Corollary~\ref{cor:pertG} then implies
\begin{equation*}
\sup_{\theta \in [0,1]^n}\sup_{X \in \cal X^n} \absb{\f v_i^*\pb{H(s) + (d-1)^{-1/2}\, \theta \cdot X} \, X_l  \, \f v_j\pb{H(s)+(d-1)^{-1/2}\, \theta \cdot X}} \;\prec\; N^{-1}
\end{equation*}
for any $s \in [0,t]$.
As in the proof of \cite[Proposition~4.6]{MR3429490}, we therefore get
\begin{equation*}
\sup_{\theta\in[0,1]^n} \sup_{X \in \cal X^n }\absb{ \partial_{X_1}\partial_{X_2}\cdots \partial_{X_n} F\pb{H(s)+(d-1)^{-1/2}\, \theta \cdot X} }
\;\prec\; N^{(n+2)\tau} \,.
\end{equation*}
From this, bounding \eqref{e:derb} as in \eqref{e:DnFbdzeta}, we obtain
\begin{equation} \label{e:derbres}
  \|\partial^n F\|_{r,s} \;\leq\; N^{c+(n+2)\tau} \,.
\end{equation}
for arbitrarily small $c>0$ and $N$ large enough.
Then \eqref{e:continuity} follows from Proposition~\ref{e:EF0Ft} since
$O(tD^{-1/2}N)N^{c+6\tau} \leq O(N^{-\alpha/2+2c+6\tau})\leq 1$ for $t \leq N^{-1+c}$
and $D \geq N^{\alpha}$,
by chooosing $c$ and $\tau$ sufficiently small. 

In the last step, we combine \eqref{e:init} and \eqref{e:continuity}, and thus obtain
\begin{align*}
\E[\chi(Q_i(H(s)))] \;=\; O(N^{3\tau/2}) \,,
\end{align*}
for any $s \geq 0$.
Then \eqref{e:lvrpQ} follows easily by Markov's inequality and the definition of $\chi$.
\end{proof}

\begin{proof}[Proof of Proposition~\ref{prop:comp}: gap statistics]
Throughout the proof, we use the abbreviation $\lambda_i(t) \equiv \lambda_i(H(t))$.
Fix $\kappa>0$, $\delta>0$, and $t \leq N^{-1-\delta}D^{1/2}$.
Since $\varrho(\gamma_i)$ is bounded above and below for $i \in \qq{\kappa N, (1-\kappa)N}$,
it suffices to prove \eqref{e:gap2} with $\varrho(\gamma_{i})$ replaced by $1$.
Moreover, for any $n \in \N$ and $\phi \in C^\infty(\R^n)$ with bounded first four derivatives, it suffices to show
the stronger claim
\begin{equation} \label{e:phiconv}
\E \phi\pb{N\lambda_i(0), \dots, N\lambda_{i+n}(0)}
\;=\;
\E \phi\pb{N\lambda_i(t), \dots, N\lambda_{i+n}(t)}
+
o(1)
\end{equation}
as $N \to \infty$, uniformly in $i \in \qq{\kappa N, (1-\kappa)N}$.
For simplicity of notation, we only prove \eqref{e:phiconv} for $n=1$; the general case is analogous
and we comment on the differences at the end of the proof.
Thus, for any $i\in \qq{\kappa N, (1-\kappa)N}$ and $\phi \in C^\infty(\R)$ with bounded first four derivatives, we show
\begin{equation}\label{e:oneg}
\E[\phi(N\lambda_{i}(0))]-\E[\phi(N\lambda_{i}(t))] \;=\; o(1) \,.
\end{equation} 

Given a small constant $\tau>0$,
we choose a cutoff function $\rho$ such that $\rho(x)=1$ for $x\leq N^{2\tau}$ and $\rho(x)=0$ for $x\geq 2N^{2\tau}$.
Using \eqref{e:lvrpQ}, we can first remove a bad event on which $Q_i$ is large:
\begin{align*}
&\left|\E[\phi(N\lambda_{i}(0))]-\E[\phi(N\lambda_{i}(t))]\right|\\
&\;\leq\; \absb{\E[\phi(N\lambda_{i}(0))\rho(Q_i(H(0)))]-\E[\phi(N\lambda_{i}(t))\rho(Q_i(H(t)))]}\\
&\qquad +\|\phi\|_{\infty}\pb{\P(Q_i(H(0))\geq N^{2\tau})+\P(Q_i(H(t))\geq N^{2\tau})}\\
&\;\leq\; \absb{\E[\phi(N\lambda_{i}(0))\rho(Q_i(H(0)))]-\E[\phi(N\lambda_{i}(t))\rho(Q_i(H(t)))]}+O\pbb{\frac{\|\phi\|_{\infty}}{N^{\tau/2}}} \,.
\end{align*}
To estimate the right-hand side, 
we apply Proposition~\ref{prop:EF0Ft} with $F(H)\deq \phi(N\lambda_{i}(H))\rho(Q_i(H))$.
By an argument analogous to that used to obtain \eqref{e:derbres},
for any $r$ and $n=1,2,3,4$, we find the bound
\begin{equation}\label{e:derb2}
\|\partial^n F\|_{r,s} 
\;\leq\; N^{c+O(\tau)}
\end{equation}
for arbitrarily small $c>0$ (and $N$ sufficiently large).
More precisely, by the product rule, the derivatives act either on $\phi(N\lambda_i)$ or $\rho \circ Q_i$.
In the bound of any of these derivatives, by definition of $\rho$, we can assume that $Q_i \leq 2N^{2\tau}$.
Then the derivatives of $\rho \circ Q_i$ are bounded exactly as in the proof of Proposition~\ref{prop:lvrp}.
For the derivatives of $\phi(N\lambda_i)$, by the chain rule and since $\phi$ is smooth, it suffices to 
bound the derivatives of the eigenvalues $\lambda_i$.
This is again done similarly to the bounds on the derivatives of $Q_i$.
Indeed, the derivatives of the eigenvalues can be expressed in terms of the eigenvalues and eigenvectors
as done in \cite[(4.16)--(4.18)]{MR3429490} (and with \cite[p.8]{MR0493421} for $n=4$).
The latter expressions are bounded using the delocalization of
eigenvectors \eqref{e:Gbdt}, and using that
\begin{equation*}
  \sum_{j: j \neq i} \frac{1}{|\lambda_i(s)-\lambda_j(s)|} \;\prec\; N Q_i^{1/2}(H(s))\,,\qquad
  \sum_{j: j \neq i} \frac{1}{|\lambda_i(s)-\lambda_j(s)|^k} \;\leq\; N^k Q_i^{k/2}(H(s))\,,
\end{equation*}
as in \cite[(4.11)--(4.12)]{MR3429490}.

As a consequence of Proposition~\ref{prop:EF0Ft} and \eqref{e:derb2},
with $t \leq N^{-1-\delta}D^{1/2}$, we finally obtain
\begin{equation*}
  \absa{ \E[\phi(N\lambda_{i}(0))\rho(Q_i(H(0)))]-\E[\phi(N\lambda_{i}(t))\rho(Q_i(H(t)))]  }
  \;=\; O(N^{c+O(\tau)-\delta}) \,,
\end{equation*}
and \eqref{e:oneg} then follows by taking $c$ and $\tau$ small enough that $c + O(\tau) < \delta$.

In the general case of a test function $\phi(N\lambda_i, \dots, N\lambda_{i+n})$,
we use the product of cutoff functions
$(\rho \circ Q_i) \cdots (\rho \circ Q_{i+n})$ instead of $\rho \circ Q_i$, and proceed
otherwise analogously.
\end{proof}

\subsection{Proof of Proposition~\ref{prop:univ}}
\label{sec:univ}

\begin{proof}[Proof of Propositions~\ref{prop:univ}]
Given the estimates \eqref{e:Gbdt}--\eqref{e:rig}, the same argument as in \cite[Section~3]{MR3429490} applies.
\end{proof}

\section*{Acknowledgements}

AK was partly supported by Swiss National Science Foundation grant 144662.
HTY was partly supported by NSF grant DMS-1307444 and a Simons Investigator fellowship.
We thank Ben Landon for helpful discussions,
Peter Sarnak for informing us about \cite{MR1691538,MR2433888},
and Michael Aizenman for bringing the works \cite{1501.04907,1503.06417} to our attention.

\bibliography{all}
\bibliographystyle{plain}

\end{document}